\newlength{\hchng}
\newlength{\vchng}
\newcommand {\bc} {\begin{center}}
\newcommand {\ec} {\end{center}}
\theoremstyle{plain}
\newtheorem{thm}{Theorem}[section]
\newtheorem{lem}{Lemma}[section]
\newtheorem{defn}{Definition}[section]
\newtheorem{prop}{Proposition}[section]
\newtheorem{cor}{Corollary}[section]	
\newtheorem{rem}{Remark}[section]
\newtheorem{pro}{Property}
\newenvironment{proof}[1]{\begin{trivlist} \item[] {\em Proof of #1:}}{\hfill $\Box$
                      \end{trivlist}}
\newcommand{\ud}{\,\mathrm{d}}
\newcommand{\la}{\lambda}
\newcommand{\R}{\mathbb{R}}
\newcommand{\Om}{\Omega}
\newcommand{\pa}{\partial}
\newcommand{\eps}{\epsilon}
\newcommand{\norm}[1]{\left\lVert#1\right\rVert}
\newcommand{\Addresses}{{
  \bigskip
  \footnotesize

  T.~Beck, \textsc{Department of Mathematics, University of North Carolina,
    Chapel Hill, North Carolina}\par\nopagebreak
  \textit{E-mail address}: \texttt{tdbeck@email.unc.edu}

}}
\title{The torsion function of convex domains of high eccentricity}
\date{}     
\author{Thomas Beck}
\date{\today}                                           
\begin{document}
\maketitle

\begin{abstract}

The torsion function of a convex planar domain $\Omega$ has convex level sets, but explicit formulae are known only for rectangles and ellipses. Here we study the torsion function on convex planar domains of high eccentricity. We obtain an approximation for the torsion function by viewing the domain as a perturbation of a rectangle in order to define an approximate Green's function for the Laplacian. For a class of convex domains we use this approximation to establish sharp bounds on the Hessian and the infinitesimal shape of the level sets around its maximum. We also use these results to compare the behaviour of the torsion function and the first eigenfunction of the Dirichlet Laplacian around their respective maxima. 

\end{abstract}

\section{Introduction}

The torsion function $v(x,y)$ satisfies
\begin{eqnarray*}
    \left\{ \begin{array}{rlc}
    \Delta v(x,y) & = -1 & \text{in } \Om \\
   v(x,y) & = 0& \text{on } \pa \Om.
    \end{array} \right.
\end{eqnarray*}
Throughout, $\Omega$ will be a convex planar domain, and in this case, Makar-Limanov \cite{ML}  shows that $v^{1/2}$ is concave and so $v$ has convex level sets. One of the main aims of this paper is to study the behaviour of $v$ near its maximum, with estimates that are uniform as the eccentricity of $\Omega$ increases. We will do this by looking at the second derivatives of $v$ near the maximum, as by Taylor's theorem they govern the infinitesimal shape of the level sets around the maximum. Denoting $C_{\Omega}$ to be the infinite cylinder with cross-section $\Omega$, of constant density, the integral of $v(x,y)$ is a measure of the resistance of $C_\Omega$ to a twist about the $z$-axis (torsion). The torsion function $v(x,y)$ itself is also equal to the expected first exit time from $\Omega$ of Brownian motion started at the point $(x,y)$. Therefore, the maximum of $v$ gives the point in $\Omega$ where the exit time is maximized, and the shape of the level sets around the maximum determine how the expected exit time decreases as we move away from the maximum. To study the second derivatives of $v$, and of independent interest, we also establish an approximation of the torsion function for domains of high eccentricity, by viewing the domain as a perturbation of a rectangle or ellipse, where we can write down explicit formulae. 
\\
\\
By rotating $\Omega$ so that its projection onto the $y$-axis is the smallest among any direction, and dilating, we can ensure that it is of the following form:  $\Omega$ can be written as 
\begin{align*}
\Omega = \{(x,y) \in \R^2: x\in[a,b], f_1(x) \leq y \leq f_2(x)\},
\end{align*}
for functions $f_1(x)$, $f_2(x)$ with $0 \leq f_1(x) \leq f_2(x) \leq 1$, which are convex and concave respectively. The height function $h(x) = f_2(x) - f_1(x)$ is concave, and satisfies
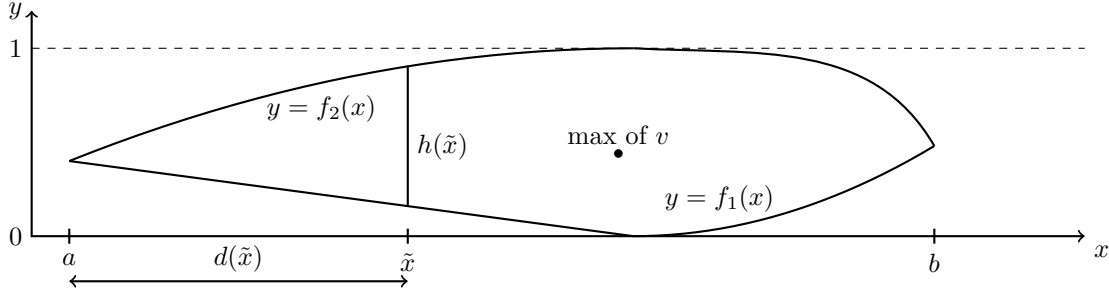
\begin{figure} \label{fig:torsion1}
\begin{tikzpicture}
\draw [thick, <->] (1,3) -- (1,0) -- (15,0);
\draw [thick, <->] (1.5,-0.6) -- (6,-0.6);
\draw [thick] (6,0.4) -- (6,2.27);
\draw [dashed] (1,2.5) -- (15,2.5);
\draw[thick] (1.5,1) -- (9,0);
\draw[thick, domain=9:13] plot (\x, {1.2*(\x-9)^2/16});
\draw[thick, domain=1.5:9] plot (\x, {2.5 - 1.5*(\x-9)^2/(7.5)^2});
\draw[thick] (9,2.5) to [out=357,in=1200] (13,1.2);
\draw [thick] (1.5,-0.1) node [below] {$a$} -- (1.5,0.1);
\draw [thick] (6,-0.1) node [below] {$\tilde{x}$}  -- (6,0.1);
\draw [thick] (13,-0.1) node [below] {$b$}  -- (13,0.1);
\node[above] at (3.75,-0.6) {$d(\tilde{x})$};
\node[above left] at (11,0.2) {$y=f_1(x)$};
\node[below right] at (4,2) {$y=f_2(x)$};
\node [below right] at (15,0) {$x$};
\node[right] at (6,1.2) {$h(\tilde{x})$};
\node [left] at (1,3) {$y$};
\node [left] at (1,2.5) {$1$};
\node [left] at (1,0) {$0$};
\draw [fill] (8.8,1.1) circle [radius=.05];;
\node [above] at (8.8,1.1) {max of $v$};
\end{tikzpicture}
\caption{The normalisation of the domain $\Omega$}
\end{figure}
\begin{align*}
0 \leq h(x) \leq 1, \qquad \max_{x\in[a,b]}h(x) = 1. 
\end{align*}
See Figure \ref{fig:torsion1} for an example  of such a domain $\Omega$. The domain $\Omega$ has inner radius comparable to $1$, and diameter comparable to $N=b-a$. By the maximum principle $v\geq0$ in $\Omega$, and the maximum of $v$ is comparable to $1$.

\subsection{An approximation for the torsion function}

 When $\Omega$ is a rectangle or an ellipse, we have an explicit formula for $v$: For $\Omega = \left[-\tfrac{1}{2}N,\tfrac{1}{2}N\right] \times [0,1]$, the torsion function is given by 
\begin{align} \label{eqn:torsion-rect}
\tfrac{1}{2}y\left(1 - y\right) -  \frac{2}{\pi^3}\sum_{n\geq 1}\frac{1-(-1)^n}{n^3\cosh(\tfrac{1}{2}n \pi N)} \sin \left(n\pi y\right) \cosh\left(n\pi x\right),
\end{align}
while when $\Omega$ is the ellipse of major axis $\tfrac{1}{2}N$, minor axis $\tfrac{1}{2}$, centred at the origin, the torsion function is
\begin{align} \label{eqn:torsion-ell}
\tfrac{1}{8}\left(\tfrac{1}{N^2} + 1\right)^{-1}\left(1 - \tfrac{4}{N^2}x^2 - 4y^2\right).
\end{align}
In \eqref{eqn:torsion-rect}, when $x$ is away from the boundary of the interval $\left[-\tfrac{1}{2}N,\tfrac{1}{2}N\right]$, we can think of $\tfrac{1}{2}y\left(1 - y\right)$ as being the main term as $N$ increases, in the sense that the infinite sum can be bounded by $e^{-\pi d(x)}$. Here $d(x)$ is the distance of $x$ from the boundary of the interval. For the ellipse we can write the domain as
\begin{align*}
\left\{(x,y)\in\mathbb{R}^2: -\tfrac{1}{2}N\leq x\leq \tfrac{1}{2}N, - \sqrt{\tfrac{1}{4} - \tfrac{1}{N^2}x^2}\leq y \leq \sqrt{\tfrac{1}{4} - \tfrac{1}{N^2}x^2}\right\}
\end{align*}
and the torsion function as
\begin{align*}
\tfrac{1}{2}\left(\tfrac{1}{N^2} + 1\right)^{-1}\left(y +  \sqrt{\tfrac{1}{4} - \tfrac{1}{N^2}x^2}\right) \left(\sqrt{\tfrac{1}{4} - \tfrac{1}{N^2}x^2}-y\right). 
\end{align*}
Again for $N$ large this has the main term $\tfrac{1}{2}\left(y +  \sqrt{\tfrac{1}{4} - \tfrac{1}{N^2}x^2}\right) \left(\sqrt{\tfrac{1}{4} - \tfrac{1}{N^2}x^2}-y\right)$. To study the torsion function for general convex planar domains $\Omega$, we will view $\Omega$ as a perturbation of a rectangular domain. As the diameter of $\Omega$ increases, we will consider the approximation of the torsion function by  
\begin{align} \label{eqn:v1}
v_1(x,y) = \frac{1}{2}(y-f_1(x))(f_2(x)-y) . 
\end{align}
Note that in the rectangular and ellipse case, $v_1(x,y)$ is precisely the term picked out when $N$ is large. Our first main theorem studies the extent to which $v(x,y)$ is approximated by $v_1(x,y)$, with a bound that becomes stronger as the diameter of $\Omega$ and $d(x)$ increases.
\begin{thm} \label{thm:Approx}
Let $\tilde{x}\in[a,b]$ be given, with  $h(\tilde{x}) \geq \tfrac{1}{2}\max_{x\in[a,b]}h(x)=\tfrac{1}{2}$. Setting $d(\tilde{x}) = \min\{\tilde{x}-a,b-\tilde{x}\}$, given $c^*>0$, there exist constants $c_1$, $C_1$ depending only on $c^*$ such that
\begin{align*}
\left|v(\tilde{x},y) - v_1(\tilde{x},y)\right| \leq C_1e^{-c_1d(\tilde{x})} + C_1 \sup_{|x-\tilde{x}|\leq \tfrac{3}{4}d(\tilde{x})}e^{-c_1|x-\tilde{x}|}\left|h(x)-h(\tilde{x})\right|, \\
\left|\pa_{x}v(\tilde{x},y) \right| \leq C_1e^{-c_1d(\tilde{x})} + C_1 \sup_{|x-\tilde{x}|\leq \tfrac{3}{4}d(\tilde{x})}e^{-c_1|x-\tilde{x}|}\left|h(x)-h(\tilde{x})\right|,
\end{align*}
for all $y\in [f_1(\tilde{x})+c^*,f_2(\tilde{x})-c^*]$.
\end{thm}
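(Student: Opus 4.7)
The plan is to set $w = v - v_1$, derive the elliptic equation it satisfies, and control $w$ by a comparison principle using barriers that decay exponentially in $x$. The key analytic input is the spectral gap of the Dirichlet Laplacian in the narrow ($y$) direction: harmonic functions on a strip of height $\asymp 1$ with vanishing top and bottom data decay like $e^{-\pi|x|/h}$ away from any vertical edge.

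A direct computation gives
\begin{align*}
\Delta v_1 = -1 - f_1'(x) f_2'(x) + \tfrac{1}{2}\bigl[-f_1''(x)(f_2(x) - y) + (y - f_1(x)) f_2''(x)\bigr],
\end{align*}
so $w$ solves $-\Delta w = g$ with
\begin{align*}
g(x,y) = f_1'(x) f_2'(x) + \tfrac{1}{2}\bigl[f_1''(x)(f_2(x) - y) - (y - f_1(x)) f_2''(x)\bigr].
\end{align*}
Since $v_1$ vanishes on the graph portion of $\partial\Omega$, $w = 0$ there, while on any vertical segments at $x = a, b$, $w = -v_1$ is bounded by a universal constant. I would then restrict to $\Omega_{\tilde x} = \Omega \cap \{|x - \tilde x| < \tfrac{3}{4} d(\tilde x)\}$, which is a curved sub-strip of cross-sectional height between $\tfrac{1}{4}$ and $1$ (by concavity of $h$ and $h(\tilde x) \geq \tfrac{1}{2}$), and split $w = w^h + w^p$, where $w^h$ is harmonic with the boundary data of $w$ carried on the two vertical cross-sections $x = \tilde x \pm \tfrac{3}{4} d(\tilde x)$, and $w^p$ solves $-\Delta w^p = g$ with zero boundary data.

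To bound $w^h(\tilde x, y)$ I would compare against an explicit super-solution of the form $\psi(x,y) = A \sin\bigl(\pi (y - f_1(x))/h(x)\bigr) \cosh(c_1 (x - \tilde x))$, which for $c_1$ slightly below $\pi/h(\tilde x)$ is a super-solution on $\Omega_{\tilde x}$; the maximum principle then yields $|w^h(\tilde x, y)| \leq C_1 e^{-c_1 d(\tilde x)}$ for $y \in [f_1(\tilde x) + c^*, f_2(\tilde x) - c^*]$. For $w^p$ I would use the Green's function representation on $\Omega_{\tilde x}$, together with the same spectral-gap bound $|G(\tilde x, y; x', y')|, |\partial_{x'} G| \leq C e^{-c_1|x' - \tilde x|}$ for interior arguments. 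The forcing $g$ contains bounded factors $f_1' f_2'$ and the signed measures $f_1'', f_2''$; integrating by parts once in $x'$ transfers each $f_i''$ onto $\partial_{x'} G$, and combining the resulting first-derivative factors through $h = f_2 - f_1$, a further integration by parts produces integrands of the form $e^{-c_1|x'-\tilde x|} |h(x') - h(\tilde x)|$, which is precisely the second term in the statement.

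The derivative estimate on $\partial_x v$ follows by the same strategy applied after a straightening of coordinates $\eta = (y - f_1(x))/h(x)$, in which $v$ is close to $\tfrac{1}{2} h(x)^2 \eta(1 - \eta)$ and one differentiates the barrier $\psi$ and the kernel $G$ in $x$ before inserting them into the above scheme. The main obstacle is the two integrations by parts for the $f_i''$ terms: because $f_1$ is only convex and $f_2$ only concave, these are merely signed Radon measures, so one must work in the BV sense and track carefully the boundary terms at $|x' - \tilde x| = \tfrac{3}{4} d(\tilde x)$, matching their exponential size against the first term in the estimate. A secondary obstacle is that $\partial_x v_1$ is not in general small (for example on a parallelogram, where $f_1, f_2$ are linear with equal slopes), so the derivative bound cannot be obtained by naively combining $\partial_x v_1$ with $\partial_x w$; working in the straightened $(x, \eta)$ coordinates and transferring derivatives back to $(x, y)$ at the end avoids this issue.
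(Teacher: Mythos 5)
Your decomposition $w = w^h + w^p$, with a barrier for the harmonic part and a Green's function plus BV integration by parts for the particular part, is a genuinely different route from the paper: the paper instead constructs an approximate Green's function $G^{\tilde{x}}$ by pulling back the explicit Fourier-series Green's function of a rectangle (Definition \ref{defn:approx-Greens}) and reads off both the exponential decay (Corollary \ref{cor:Green1}) and the leading term $v_1$ (Lemma \ref{lem:v1}) directly from that formula, with no barrier needed. However, your barrier step has a genuine gap, and it is not peripheral.

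Your comparison function $\psi(x,y) = A\sin\bigl(\pi(y-f_1(x))/h(x)\bigr)\cosh(c_1(x-\tilde{x}))$ is not a supersolution. Writing $\eta(x,y) = (y-f_1(x))/h(x)$, one has
\begin{align*}
\Delta\psi = \sin(\pi\eta)\cosh\bigl(c_1(x-\tilde{x})\bigr)\Bigl[c_1^2 - \tfrac{\pi^2}{h(x)^2} - \pi^2\eta_x^2\Bigr] + \pi\cos(\pi\eta)\Bigl[\eta_{xx}\cosh\bigl(c_1(x-\tilde{x})\bigr) + 2c_1\eta_x\sinh\bigl(c_1(x-\tilde{x})\bigr)\Bigr],
\end{align*}
and $\eta_{xx}$ carries $f_1''$ and $h''$. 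For a general convex domain these are only signed Radon measures (corners of $f_1, f_2$ are permitted), and since the factor $\cos(\pi\eta)$ changes sign across the strip, the Dirac-mass part of $\Delta\psi$ has no definite sign. The maximum principle comparison you want is therefore unavailable for this $\psi$. This is precisely the difficulty the paper is designed to sidestep: it arranges the integrations by parts so that at most one derivative ever lands on $h$, because only $h'$ is under control (see the remark before \eqref{eqn:Gv2}). A fix that preserves your architecture is to take $\psi = A\sin(\pi y)\cosh(c_1(x-\tilde{x}))$, which involves no $f_i$, satisfies $\Delta\psi = (c_1^2-\pi^2)\psi < 0$ exactly for $c_1 < \pi$, vanishes only on $y\in\{0,1\}\supset\partial\{0\le y\le1\}$ so is nonnegative on the graph part of $\partial\Omega_{\tilde{x}}$ (where $w^h=0$), and dominates $|w|$ on the vertical cross-sections once you supply a Lipschitz bound on $v$ and $v_1$ near the corners (cf.\ Lemma \ref{lem:deriv}). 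Relatedly, the claimed decay rate ``$c_1$ slightly below $\pi/h(\tilde{x})$'' does not in fact give a supersolution, since $h(x)$ can exceed $h(\tilde{x})$ elsewhere in $\Omega_{\tilde{x}}$ (up to the value $1$); the universal rate $c_1 < \pi$ is what is needed, and this is consistent with $c_1$ depending only on $c^*$. Two smaller remarks: the parallelogram you cite as an obstruction is excluded by the paper's normalisation, which forces $0 \le f_1 \le f_2 \le 1$ with $f_1$ convex attaining $0$ and $f_2$ concave attaining $1$, hence $|f_i'| \le |h'|$; consequently $\partial_x v_1$ \emph{is} controlled by the stated error, by the concavity of $h$ (Remark \ref{rem:Approx}), so the straightened-coordinate detour is not forced. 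And the passage from $|h'|$ factors to $|h(x)-h(\tilde{x})|$ factors in the error is best obtained from that same concavity argument rather than a further integration by parts.
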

\begin{rem} \label{rem:constant}
The constants $c_1$, $C_1$ are in particular independent of the domain $\Omega$ itself $($and so are uniform in the diameter of $\Omega$$)$. Throughout, we will describe a constant as an absolute constant if it can be chosen universally $($independent of $\Omega$$)$, and otherwise will state which other constants it depends on. 
\end{rem}
To prove Theorem \ref{thm:Approx}, we will view the domain $\Omega$ as being a perturbation of an appropriately chosen rectangle. In particular, we will use the exact Green's function for the Laplacian of the rectangle, to define an approximate Green's function for the portion of $\Omega$ near $\tilde{x}$, and then use this to derive an expression for $v-v_1$ and $\pa_xv$. We will give the precise definition in Definition \ref{defn:approx-Greens} when we prove Theorem \ref{thm:Approx} in Section \ref{sec:Approx}.

\subsection{The Hessian of $v$ at its maximum}

We will use Theorem \ref{thm:Approx} to study the behaviour of $v(x,y)$ near its maximum and near the thickest part of the domain $\Omega$, around a point $\bar{x}$ such that $h(\bar{x}) = 1$. In Theorem 1 in \cite{SS}, Steinerberger shows that the level sets of the torsion function near its maximum may have eccentricity that is exponential in the diameter of $\Omega$ but no larger. In fact, this is sharp, based on the form of the torsion function for the rectangle from \eqref{eqn:torsion-rect}. Steinerberger also obtains an estimate on the Hessian at the maximum in terms of the maximum and minimum of the curvature of the boundary of the domain (see Proposition 1 in \cite{SS}). This estimate uses maximum principle techniques inspired by the work of Payne and Philippin, \cite{PP}, following on from the work of Makar-Limanov, \cite{ML}, where it is shown that $v^{1/2}$ is concave. However, this estimate is not sharp, as for example it does not recover the second derivative estimates for the torsion function for the ellipse given in \eqref{eqn:torsion-ell}. In Theorem \ref{thm:second-max} we obtain comparable upper and lower bounds on the Hessian of the torsion function in terms of the shape of the domain $\Omega$ around the point $\bar{x}$ such that $h(\bar{x})=1$. These bounds hold provided the height function $h(x)$ decays away from its maximum of $1$ in a certain uniform way (see Property \ref{pro:max} and Remark \ref{rem:pro1}). 
\\
\\
 Let $(x^*,y^*)$ be the point where $v$ attains its maximum, with $v(x^*,y^*) = v^*>0$. Using the notation of Theorem \ref{thm:Approx}, we write
\begin{align*}
\text{Error}(\tilde{x}) =  C_1e^{-c_1d(\tilde{x})} + C_1 \sup_{|x-\tilde{x}|\leq \tfrac{3}{4}d(\tilde{x})}e^{-c_1|x-\tilde{x}|}\left|h(x)-h(\tilde{x})\right|.
\end{align*}
Let $M>2$ be given. 
\begin{pro} \label{pro:max}
We say that $\Omega$ has this property if there exists $\delta \in [0,1-\tfrac{1}{2}\min\{1-h(\bar{x}+M),1-h(\bar{x}-M)\}]$  such that the following holds: For each $\tilde{x}$ with $x_{-}\leq \tilde{x} \leq x_{+}$,
\begin{align*}
\emph{Error}(\tilde{x}) \leq \tfrac{1}{100}\delta.
\end{align*}
Here $x_{-},x_{+}$ are points in $[\bar{x}-M,\bar{x}]$, $[\bar{x},\bar{x}+M]$ respectively, with $h(x_{\pm}) = 1-2\delta$.
\end{pro}
Under the assumption that Property \ref{pro:max} holds, we can obtain sharp upper and lower bounds on the second derivatives of $v(x,y)$ at its maximum:
\begin{thm} \label{thm:second-max} Suppose that \emph{Property \ref{pro:max}} holds for some $M$ and for a value of $\delta$ with $\delta = \delta(M)>0$ sufficiently small. For each unit direction $n = (a,b)$, with $a^2+b^2=1$, define $\alpha_{n}$ by
\begin{align*}
\alpha_{n}  = \max\{|b|^2,\delta\}.
\end{align*}
Then, there exist constants $c_1^* = c_1^*(M)$, $C^*_1 = C^*_1(M)$ such that
\begin{align*}
\frac{1}{C_1^*}\alpha_{n}\leq -\pa_{\nu}^2 v(x,y) \leq C_1^*\alpha_{n}
\end{align*}
for all $(x,y)\in B_{c_1^*}(x^*,y^*)$.
\end{thm}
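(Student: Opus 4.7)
The plan is to combine Theorem \ref{thm:Approx} with interior regularity for harmonic functions to control the entries of the Hessian of $v$ at $(x^*,y^*)$ up to an $O(\delta)$ error, and to argue separately that the small eigenvalue is bounded below in magnitude by a constant multiple of $\delta$. First I would locate the maximum: maximum principle comparison of $v$ with the strip torsion function $y(1-y)/2$ on $\Om\subset\R\times[0,1]$ gives $v\leq 1/8$, while Theorem \ref{thm:Approx} at $(\bar x,1/2)$ produces the matching lower bound $v^*\geq 1/8-C\delta$. Applying Theorem \ref{thm:Approx} at $\tilde x=x_\pm$ and using $h(x_\pm)=1-2\delta$ shows $\max_y v(x_\pm,y)\leq h(x_\pm)^2/8+\delta/100\leq v^*-c\delta$; a maximum-principle argument on the subdomains $\Om\cap\{x>x_+\}$ and $\Om\cap\{x<x_-\}$ then forces $x^*\in(x_-,x_+)$, and a direct computation using the 1D comparison in the slice at $x^*$ places $y^*$ at distance comparable to $h(x^*)/2\approx 1/2$ from $f_1(x^*)$ and $f_2(x^*)$. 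Hence $(x^*,y^*)$ sits well inside $[x_-,x_+]\times[f_1+c^*,f_2-c^*]$ for a suitable $c^*=c^*(M)$. Since $\Delta v=-1$ is constant, $\pa_x v$ is harmonic, and Theorem \ref{thm:Approx} gives $|\pa_x v|\leq\delta/100$ on that rectangle; interior derivative estimates for harmonic functions on a ball of radius comparable to $c^*$ then yield $|\pa_x^2 v|,|\pa_{xy}^2 v|\leq C_1(M)\delta$ on a ball containing $(x^*,y^*)$, and the trace identity $\pa_x^2v+\pa_y^2v=-1$ gives $|\pa_y^2v+1|\leq C_1(M)\delta$ there.

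With these bounds in hand, expanding $-\pa_\nu^2 v=a^2(-\pa_x^2 v)+2ab(-\pa_{xy}^2 v)+b^2(-\pa_y^2 v)$ yields $|-\pa_\nu^2 v-b^2|\leq 3C_1(M)\delta$ uniformly in $\nu$. This immediately gives the upper bound $-\pa_\nu^2 v\leq C_1^*(M)\alpha_n$ for all $\nu$, and the matching lower bound $-\pa_\nu^2 v\geq b^2/2\geq\alpha_n/2$ whenever $b^2\geq K(M)\delta$ for a sufficiently large $K(M)$. The remaining case, and the main obstacle, is the lower bound $-\pa_x^2v(x^*,y^*)\geq c(M)\delta$ required for directions with $b^2<K(M)\delta$ (for which $\alpha_n=\delta$). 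To attack this I would restrict to the horizontal line $y=y^*$ and set $V(x):=v(x,y^*)$, which satisfies $V'(x^*)=0$ and the drop $V(x_\pm)\leq v^*-c\delta$ coming from the locating step. Taylor's theorem with Lagrange remainder produces $-V''(\xi_\pm)\geq c\delta/M^2$ at some intermediate $\xi_\pm$, but naive Lipschitz estimates on $\pa_x^2v$ (obtained from harmonic regularity) permit a variation of size $O(\delta M)$ across the strip, which is far weaker than the target lower bound.

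My proposed workaround is to write a symmetrized higher-order Taylor expansion for $V$ at $x^*$ evaluated at $x_+$ and $x_-$: summing the expansions cancels the odd $V'''(x^*)$ term, and iterated harmonic interior estimates bound $V^{(4)}$ (and higher even-order derivatives) in terms of $\|\pa_x^2 v\|_\infty\leq C\delta$. An optimal choice of the evaluation distance $d$, adapted to the rate at which $V$ decays from $v^*$, should then give $-V''(x^*)\geq c(M)\delta$ with a constant that may depend strongly on $M$; balancing the leading $c\delta/d^2$ term against the higher-order remainder is what produces the $M$-dependence of $C_1^*(M)$. Combining with the previous bounds on $-\pa_{xy}^2v$ and $-\pa_y^2v$ then yields $-\pa_\nu^2 v\geq c(M)\delta=c(M)\alpha_n$ in the hard case as well. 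This transfer step is the technical heart of the argument; if the direct symmetrized Taylor approach does not close, an alternative is to compute $\pa_x^2 v_1(x^*,y^*)=\tfrac{h(x^*)}{4}h''(x^*)-f_1'(x^*)f_2'(x^*)$ explicitly and to compare $\pa_x^2v$ with $\pa_x^2 v_1$ via the equation $\Delta(v-v_1)=-\pa_x^2 v_1$ together with the $L^\infty$ bound from Theorem \ref{thm:Approx}, converting the pointwise lower bound into one on the curvature of $h$ at $x^*$ provided by Property \ref{pro:max}. The final extension of the bounds from $(x^*,y^*)$ to the ball $B_{c_1^*}(x^*,y^*)$ follows by choosing $c_1^*(M)$ small enough that the Lipschitz continuity of the Hessian entries (again from harmonic regularity) does not destroy the estimates.
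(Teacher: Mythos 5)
Your upper bound and the locating of $(x^*,y^*)$ are sound, and they roughly parallel the paper's Lemmas~\ref{lem:max-value1} and~\ref{lem:level1}. The critical gap is the lower bound $-\pa_x^2v(x^*,y^*)\geq c(M)\delta$, which is indeed (as you suspected) where your argument does not close.

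Your symmetrized Taylor expansion cannot be made to work. Since $\pa_x^2v$ is harmonic and $|\pa_x^2v|\leq C\delta$ on a slab of unit thickness around $(x^*,y^*)$, harmonic interior estimates at unit scale give only $|\pa_x^4v|\leq C'\delta$ with $C'$ an absolute constant; they do not give anything smaller. With evaluation distance $d\sim M$ forced by the geometry of $[x_-,x_+]$, the even-order remainder in the symmetrized expansion is $\sim M^4\delta$, which dominates the leading term $\sim\delta/M^2$ as soon as $M\geq 2$, and iterating to higher order only makes the remainder worse. Your alternative route via $\pa_x^2v_1=\tfrac{1}{2}h\,h''-\cdots$ also fails, because Theorem~\ref{thm:Approx} bounds only $v-v_1$ and $\pa_x v$ in $L^\infty$ (not the Hessian difference), and more fundamentally $h''$ is not even assumed to exist pointwise --- the only hypothesis on $h$ is concavity. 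The underlying difficulty is that you never establish any one-sidedness of $-\pa_x^2v$; from $|\pa_x^2v|\leq C\delta$ alone there is no mechanism to propagate a lower bound from some intermediate point $\xi$ to $x^*$.

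The paper supplies exactly that missing one-sidedness and propagation mechanism, and it is the heart of the proof. Makar-Limanov's theorem ($v^{1/2}$ concave) gives $v\,\pa_n^2v-\tfrac12(\pa_nv)^2\leq0$, i.e.\ $-\pa_n^2v\geq-(\pa_nv)^2/(2v)$. Combined with the first-derivative bounds $|\pa_xv|\leq\delta/100$, $|\pa_yv|\leq C_3\sqrt{\eps^*}$ (Lemma~\ref{lem:x-first}) and the lower bound $v\geq\tfrac18-\tfrac{1}{100}\delta$, this makes $F_n:=-\pa_n^2v+5\beta_n^2$ a \emph{nonnegative} harmonic function on the rectangle $R^{\delta,\eps^*}$, to which the Harnack inequality (Proposition~\ref{prop:Harnack}) applies. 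Harnack then transports the size of $-\pa_n^2v$ from the specific points (guaranteed by the level-set analysis of Lemma~\ref{lem:level1}) where it is demonstrably of order $\delta$ to the whole ball around $(x^*,y^*)$, giving the two-sided bounds. Without the Makar-Limanov sign information and a Harnack-type argument (or a genuine substitute for them), the lower bound on $-\pa_x^2v$ cannot be reached from the estimates you have.
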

In particular, the torsion function is concave in a neighbourhood of its maximum whenever Property \ref{pro:max} holds in this way.
\begin{rem} \label{rem:pro1}
Before continuing let us describe a class of domains for which \emph{Property \ref{pro:max}} holds: Let constants $\alpha_1$, $\beta_1$, $\gamma_1$, $\delta_1$ and $C^*$ be given with
\begin{align*}
\alpha_1>0, \quad \frac{1}{C^*}\leq \beta_1 \leq C^*, \quad \gamma_1\geq1,\quad \delta_1>0.
\end{align*}
Suppose that $h(x)$ satisfies
\begin{align*}
\left|h(x) - 1 + \frac{\beta_1|x|^{\gamma_1}}{N^{\gamma_1}}\right| \leq C^*\frac{|x|^{\gamma_1+\delta_1}}{N^{\gamma_1+\delta_1}}
\end{align*}
for $|x|\leq N^{\alpha_1}$. Then, for $N$ sufficiently large $($depending on  $\alpha_1$, $\beta_1$, $\gamma_1$, $\delta_1$, $C^*$ and $c_1$, $C_1$$)$, \emph{Property \ref{pro:max}} holds for some $M$ and $\delta$: To see this, we first note that given $M$, for all $|\tilde{x}|\leq M$, we have
\begin{align*}
\max_{x}\left||x|^{\gamma_1}-|\tilde{x}|^{\gamma_1}\right|e^{-c_1|x-\tilde{x}|} \leq A_1M^{\gamma_1-1}
\end{align*}
for a constant $A_1$ depending only on $c_1$ and $\gamma_1$. Therefore, we have
\begin{align} \label{eqn:rem-pro1}
\emph{Error}(\tilde{x}) \leq C_1e^{-\tfrac{1}{2}c_1N^{\alpha_1}} + C_1\beta_1A_1M^{\gamma_1-1} N^{-\gamma_1} + 2C_1 C^*M^{\gamma_1+\delta_1}N^{-\gamma_1-\delta_1}. 
\end{align}
At $x = M$, we have $\left|h(x) -1 - \beta_1M^{\gamma_1}N^{-\gamma_1}\right|\leq C^*M^{\gamma_1+\delta_1}N^{-\gamma_1-\delta_1}$. Therefore, by first choosing $M$ sufficiently large so that
\begin{align*}
C_1\beta_1A_1M^{\gamma_1-1} \leq \tfrac{1}{1000}\beta_1M^{\gamma_1},
\end{align*}
and then $N$ sufficiently large depending on $M$, so that the first and third terms on the right hand side of \eqref{eqn:rem-pro1} are smaller than the second, we find that Property \ref{pro:max} holds with $\delta = \tfrac{1}{2}\beta_1M^{\gamma_1}N^{-\gamma_1}$. 
\end{rem}
\begin{cor} \label{cor:second-max}
In the case where $h(x) = \tilde{h}(N^{-1}x)$, for a $C^{2,\alpha}$-smooth function $\tilde{h}$, with $\tilde{h}(0) = 1$, $\tilde{h}'(0) = 0$, $\tilde{h}''(0) <0$, Theorem \ref{thm:second-max} holds for $N$ sufficiently large, with $\delta$ comparable to $N^{-2}$. In particular, $-\pa_{x}^2v(x,y)$ is comparable to $N^{-2}$, which agrees with the bounds for the exact ellipse from \eqref{eqn:torsion-ell}. 
\end{cor}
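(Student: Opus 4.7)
The plan is to reduce the corollary to Remark \ref{rem:pro1} by Taylor expanding $\tilde h$ near $0$, and then to apply Theorem \ref{thm:second-max} in the horizontal direction. Since $\tilde h\in C^{2,\alpha}$ with $\tilde h(0)=1$, $\tilde h'(0)=0$ and $\tilde h''(0)<0$, we have the expansion
\begin{align*}
\tilde h(s) = 1 - \tfrac{1}{2}|\tilde h''(0)|\,s^2 + O(|s|^{2+\alpha})
\end{align*}
for $s$ in a neighbourhood of $0$. Substituting $s = x/N$, and using that $h(x) = \tilde h(x/N)$, we obtain
\begin{align*}
\Bigl|h(x) - 1 + \tfrac{1}{2}|\tilde h''(0)|\,\tfrac{x^2}{N^2}\Bigr| \leq C^*\tfrac{|x|^{2+\alpha}}{N^{2+\alpha}}
\end{align*}
for $|x|\leq \eps_0 N$ with some absolute $\eps_0>0$ and $C^*$ depending on $\tilde h$. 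This is exactly the hypothesis of Remark \ref{rem:pro1} with parameters $\gamma_1=2$, $\beta_1 = \tfrac{1}{2}|\tilde h''(0)|$, $\delta_1 = \alpha$, and $\alpha_1$ chosen to be any fixed small positive number (so that the region $|x|\leq N^{\alpha_1}$ falls inside the validity range of the Taylor expansion).

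With these parameters fixed, Remark \ref{rem:pro1} applies verbatim: there exists $M$ (depending on $\tilde h$ through $\beta_1$ and on $c_1$) large enough that $C_1\beta_1 A_1 M^{\gamma_1-1} \leq \tfrac{1}{1000}\beta_1 M^{\gamma_1}$, and then for all sufficiently large $N$ Property \ref{pro:max} holds with
\begin{align*}
\delta = \tfrac{1}{2}\beta_1 M^2 N^{-2},
\end{align*}
so in particular $\delta$ is comparable to $N^{-2}$ with constants depending on $\tilde h$. Note that the hypothesis $\delta\leq 1-\tfrac{1}{2}\min\{1-h(\bar x\pm M)\}$ of the property and the constraint $\delta = \delta(M)$ sufficiently small required by Theorem \ref{thm:second-max} are both automatic once $N$ is large.

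Finally, specialise Theorem \ref{thm:second-max} to the unit direction $n = (1,0)$, for which $b = 0$ and hence
\begin{align*}
\alpha_n = \max\{0,\delta\} = \delta.
\end{align*}
The theorem then yields $\tfrac{1}{C_1^*}\delta \leq -\pa_x^2 v(x,y) \leq C_1^*\delta$ in a fixed neighbourhood of $(x^*,y^*)$, and since $\delta$ is comparable to $N^{-2}$ we conclude that $-\pa_x^2 v(x,y)$ is comparable to $N^{-2}$, in agreement with the explicit ellipse formula \eqref{eqn:torsion-ell}. There is no real obstacle beyond bookkeeping: the only point that requires care is ensuring that $M$ is chosen before $N$ and that $N$ is then taken large enough that the error estimate in \eqref{eqn:rem-pro1} is dominated by the quadratic term $\beta_1 M^2 N^{-2}$, which is precisely the calibration performed in Remark \ref{rem:pro1}.
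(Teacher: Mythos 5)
Your argument is correct and is precisely the one the paper intends: the corollary is placed immediately after Remark \ref{rem:pro1} and is meant to follow from it by Taylor expansion of $\tilde h$ (giving $\gamma_1 = 2$, $\beta_1 = \tfrac{1}{2}|\tilde h''(0)|$, $\delta_1 = \alpha$), after which Theorem \ref{thm:second-max} applied to $n=(1,0)$ yields $-\partial_x^2 v \asymp \delta \asymp N^{-2}$ near the maximum. No gaps; the bookkeeping (choosing $M$ before $N$ so the quadratic term dominates the error in \eqref{eqn:rem-pro1}) is exactly the calibration spelled out in the remark.
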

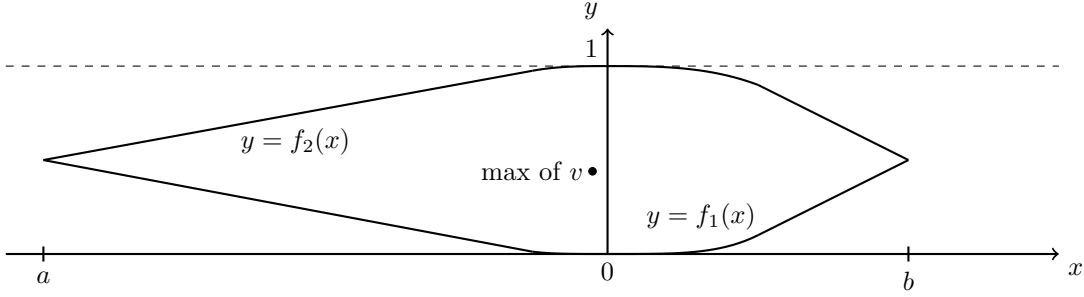
\begin{figure}
\begin{tikzpicture}
\draw [thick, ->] (1,0) -- (15,0);
\draw [thick, ->] (9,0) -- (9,3);
\draw [dashed] (1,2.5) -- (15,2.5);
\draw[thick, domain=9:11] plot (\x, {2.5 -(\x-9)^3/32});
\draw[thick, domain=8:9] plot (\x, {2.5 -(9-\x)^3/16});
\draw[thick, domain=9:11] plot (\x, {(\x-9)^4/64});
\draw[thick, domain=8:9] plot (\x, {(9-\x)^4/32});
\draw[thick] (11,0.25) -- (13,1.25);
\draw[thick] (11,2.25) -- (13,1.25);
\draw[thick] (1.5,1.25) -- (8,1/32);
\draw[thick] (1.5,1.25) -- (8,2.5-1/16);
\draw [thick] (1.5,-0.1) node [below] {$a$} -- (1.5,0.1);
\draw [thick] (13,-0.1) node [below] {$b$}  -- (13,0.1);
\node[above left] at (11.1,0.2) {$y=f_1(x)$};
\node[below right] at (4,1.8) {$y=f_2(x)$};
\node [below right] at (15,0) {$x$};
\node [above left] at (9,3) {$y$};
\node [above left] at (9,2.5) {$1$};
\node [below] at (9,0) {$0$};
\draw [fill] (8.8,1.1) circle [radius=.05];;
\node [left] at (8.8,1.1) {max of $v$};
\end{tikzpicture}
\caption{An example of a domain satisfying Property \ref{pro:max}}  \label{fig:torsion3}
\end{figure}
A domain which satisfies Property \ref{pro:max} for $N$ sufficiently large is given in Figure \ref{fig:torsion3}. The domain in this figure corresponds to $h(x) = 1 - \tfrac{1}{10}N^{-3}|x|^3 + O(|x|^4)$ for $|x|\leq \tfrac{1}{10}N$. We will prove Theorem \ref{thm:second-max} in Section \ref{sec:second-max}. In the proof, Property \ref{pro:max} will be used together with Theorem \ref{thm:Approx} to determine the shape of a level set of $v(x,y)$ that extends precisely a distance comparable to $M$ from $x^*$ in the $x$ direction. 

\subsection{Comparison with the first Dirichlet eigenfunction}

Before proving Theorems \ref{thm:Approx} and \ref{thm:second-max}, in the next section we will use them to compare the behaviour of $v(x,y)$ near its maximum with that of the first Dirichlet eigenfunction of $\Omega$. Let $u(x,y)$ be the first Dirichlet eigenfunction of the Laplacian of $\Omega$, normalised so that $u>0$ in the interior of $\Omega$, and attains a maximum of $1$. The function $u(x,y)$ therefore satisfies
\begin{eqnarray*}
    \left\{ \begin{array}{rlc}
    \Delta u(x,y) & = -\la u(x,y) & \text{in } \Om \\
   u(x,y) & = 0& \text{on } \pa \Om.
    \end{array} \right.
\end{eqnarray*}
where $\lambda$ is the corresponding eigenvalue.  The torsion function has been used in \cite{ADJMF}, \cite{FM} and \cite{SS1} as a landscape function for predicting where high energy eigenfunctions of the Laplacian localize. Their approaches start with the inequality from \cite{FM}, which states that
 \begin{align*}
\left|u(x,y)\right| \leq \la v(x,y)\norm{u}_{L^{\infty}(\Omega)}.
 \end{align*}
 This inequality in fact holds for any eigenfunction of the Laplacian (or more generally Schr\"odinger operator with non-negative potential) on a bounded $\Omega\subset \R^{n}$. It implies that the eigenfunction $u(x,y)$ can only localize in those regions where $v(x,y)\geq c\la^{-1}$, and in particular that $v(x,y)\geq \la^{-1}$ at the maximum of $u$. In \cite{RS}, Rachh and Steinerberger also obtain  a lower bound on the torsion function at the maximum of the first eigenfunction and ask what would be the optimal constant in this lower bound (see Corollary 2 in Section 1.4 of \cite{RS}). In Proposition \ref{prop:cons2}, we show that as the diameter of the convex domain tends to infinity, the torsion function approaches its own maximal value in a rectangle around the maximum of the eigenfunction. In this convex setting of high eccentricity, this shows that the decay of the torsion function away from its maximum provides control on the location of the maximum of the eigenfunction and its surrounding level sets. The converse of this is not true - there exist families of convex domains, with diameter tending to infinity such that the eigenfunction is strictly bounded away from its maximal value at the maximum of the torsion function (see Proposition \ref{prop:cons1}). 
\\
\\
In \cite{CD} and \cite{CDK}, Cima and Derrick, and Cima, Derrick, and Kalachev presented numerical evidence suggesting that the maximum of the torsion function and first Dirichlet eigenfunction should be attained at the same point (and more generally, that this should hold true for $\Delta w = -f(w)$ with $f(w)>0$ for $w>0$). This conjecture has been disproved by Benson, Laugesen, Minion, and Siudeja in \cite{BLMS} for the semi-disk and isosceles triangle, although in their examples, the maxima are very close together compared to the diameter of the domain. In Section 1.4 in \cite{RS}, Rachh and Steinerberger give an example of a non-convex planar domain, where the maxima of the torsion function and eigenfunction are separated by $0.2$ of the diameter of the domain $\Omega$. In Proposition \ref{prop:cons1} below, we use Theorem \ref{thm:Approx} to construct a family of convex domains, with diameters tending to infinity, where the respective maxima are separated by an absolute constant multiplied by the diameter of the domain.

\section{The torsion function and first Dirichlet eigenfunction} \label{sec:Eigenfunction}

To compare the torsion function $v(x,y)$ with the Dirichlet eigenfunction $u(x,y)$, as well as Theorems \ref{thm:Approx} and \ref{thm:second-max}, we will also need some properties of $u(x,y)$, which are uniform as the diameter of $\Omega$ increases: By the work of Jerison \cite{Je} and Grieser and Jerison \cite{GJ}, the key length scale determining the behaviour of $u(x,y)$ is the length scale $L$ given by:
\begin{defn} \label{defn:L}
The length scale $L$ is the largest value such that $h(x) \geq 1- L^{-2}$ for all $x\in I$, where $I$ is an interval of length $L$.
\end{defn}
This value of $L$ satisfies $N^{1/3} \leq L \leq N$ (with the endpoints attained for a right triangle and rectangle respectively). Let $I'$ be the interval which is concentric with $I$ and of half the length. Note that by the concavity of $h$, we have the first derivative bounds
\begin{align} \label{eqn:GJ-h}
\left|h'(x)\right| \leq 2L^{-3} \text{ for } x\in I'.
\end{align}
Let the point in $\Omega$ where $u(x,y)$ attains its maximum be denoted by $(x_1,y_1)$.
\begin{prop}[Jerison, Grieser and Jerison \cite{Je}, \cite{GJ}] \label{prop:GJ}
There exists an absolute constant $C$ such that
\begin{align*}
x_1\in I', \qquad \left|y_1-\tfrac{1}{2}\right| \leq CL^{-3/2}.
\end{align*}
\end{prop}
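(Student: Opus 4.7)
The plan is to reduce to the Grieser--Jerison separation-of-variables ansatz, in which the first Dirichlet eigenfunction of a long thin convex domain is written as
\[
u(x,y) = \phi(x)\,\psi(x,y) + w(x,y), \qquad \psi(x,y) = \sqrt{\tfrac{2}{h(x)}}\sin\!\left(\tfrac{\pi(y-f_1(x))}{h(x)}\right),
\]
where $\psi$ is the normalised fiberwise ground state and $\phi$ is the ground state of an effective one-dimensional Schr\"odinger operator on $[a,b]$ with potential $V_{\text{eff}}(x) = \pi^2/h(x)^2$ plus lower-order curvature corrections coming from $f_1'$, $f_2'$. The remainder $w$ is orthogonal to $\psi$ on each fiber and is controlled by the quantitative Agmon estimates of \cite{Je} and \cite{GJ}.

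The first step is to localise $\phi$. Since $V_{\text{eff}}(x) \leq \pi^2(1 + O(L^{-2}))$ on $I$, while by maximality of $L$ one has $V_{\text{eff}}(x) \geq \pi^2(1 + c_0 L^{-2})$ outside a bounded multiple of $I$, the bottom eigenvalue satisfies $\la = \pi^2 + O(L^{-2})$ and $\phi$ is exponentially localised in $I$ on the length scale $L$. A Sturm-type argument shows that any nonnegative ground state attains its maximum in the classically allowed region $\{V_{\text{eff}} \leq \la\}$, and the exponential decay forces the maximum strictly inside, in the concentric half-length subinterval $I'$. Combined with the bound $u \approx \phi(x)\psi(x,y)$, this yields $x_1 \in I'$.

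For the $y$-coordinate, at each fixed $x \in I'$ the function $y \mapsto \phi(x)\psi(x,y)$ is maximised at the midline $y_{\text{mid}}(x) = \tfrac{1}{2}(f_1(x)+f_2(x))$. The derivative bound \eqref{eqn:GJ-h} together with the analogous bounds $|f_i'(x)| \lesssim L^{-3}$ on $I'$, obtained from the convexity/concavity of $f_1$, $f_2$, shows that $y_{\text{mid}}(x)$ deviates from $\tfrac{1}{2}$ by at most $O(L^{-2})$ across $I'$. Writing $y_1 = y_{\text{mid}}(x_1) + \eps$ and using $\pa_y u(x_1,y_1)=0$, one balances the size of $\pa_y w$ against $\pa_y^2(\phi\psi) \asymp 1$ at the midline. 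The Grieser--Jerison estimates provide $\norm{w}_{L^\infty}$ and $\norm{\pa_y w}_{L^\infty}$ both of order $L^{-3/2}$, where the exponent $3/2$ reflects the $L^{-1/2}$ transverse spread of the one-dimensional ground state $\phi$ around its maximum combined with an $L^{-1}$ factor from the commutator with $\pa_x$. Solving for $\eps$ then gives $|y_1 - \tfrac{1}{2}| \leq CL^{-3/2}$.

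The main obstacle is the uniform $L^{-3/2}$ control on the remainder $w$ and $\pa_y w$ as the diameter of $\Omega$ varies: this is precisely the Agmon/commutator machinery of \cite{Je}, \cite{GJ}, which I would invoke as a black box rather than reprove, since it depends on a delicate bootstrap exploiting the convexity of $\Omega$ and the explicit form of $\psi$.
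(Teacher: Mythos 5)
The paper does not prove Proposition~\ref{prop:GJ}: it is stated as a citation, with the heavy lifting delegated entirely to \cite{Je} and \cite{GJ}, exactly as the proposition's label indicates. There is therefore no in-paper argument to compare against, and the honest answer is that the author's ``proof'' is the citation itself.

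Your reconstruction is a reasonable sketch of the Grieser--Jerison framework that those references use: the fiberwise ground-state ansatz $u \approx \phi(x)\psi(x,y)$ with effective potential $\pi^2/h(x)^2$, Agmon-type localisation of $\phi$ to the interval $I$ on the scale $L$, and perturbative control of the orthogonal remainder. Two caveats are worth flagging. First, your claim that $|f_i'| \lesssim L^{-3}$ on $I'$ needs a short argument rather than being ``analogous'' to \eqref{eqn:GJ-h}: it follows because at the point $\bar{x}$ with $h(\bar{x})=1$ the normalisation forces $f_1(\bar x)=0$, $f_2(\bar x)=1$, so $f_1'\geq 0\geq f_2'$ on one side of $\bar x$ (and the reverse on the other), whence $|f_i'|\leq |h'|$ pointwise; one then still has to locate $\bar x$ relative to $I'$. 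Second, the asserted $L^\infty$ bounds of order $L^{-3/2}$ on $w$ and $\partial_y w$, and the heuristic you give for the exponent, are not obviously what \cite{Je}, \cite{GJ} prove — those papers state their remainder estimates in particular normalisations and regions, and the $L^{-3/2}$ in the proposition likely emerges from combining several of their intermediate bounds rather than from a single $\|w\|_{L^\infty}$ estimate. Since you (correctly) invoke that machinery as a black box, this is not a gap in your sketch so much as a reminder that the exponent should be pinned down by pointing to the precise statements in the references rather than by the dimensional-analysis story given.
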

In \cite{Be}, uniform estimates on $u(x,y)$ near its maximum are established:
\begin{prop}[Theorem 1.2 in \cite{Be}] \label{prop:eigenfunction-max}
Let $J_{\delta}$ be the length of the projection of the superlevel set $\{(x,y)\in\Omega: u(x,y) \geq 1-\delta\}$ onto the $x$-axis. There exist absolute constants $C$, $\delta_0>0$ such that for each $0<\delta<\tfrac{1}{2}$, we have
\begin{align*}
C^{-1}\sqrt{\delta} L \leq J_{\delta} \leq C\sqrt{\delta} L,
\end{align*}
and in $\{(x,y)\in\Omega:u(x,y)\geq 1-\delta_0\}$
\begin{align*}
C^{-1}L^{-2} \leq -\pa_{x}^2u(x,y) \leq CL^{-2}.
\end{align*}
\end{prop}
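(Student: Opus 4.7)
The plan is to reduce the two-dimensional eigenvalue problem to an effective one-dimensional Schrödinger problem on $I$, following the Grieser--Jerison fiber-reduction strategy, and then exploit harmonic-oscillator-type behaviour at the bottom of the effective well.

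First I would fix the approximate separated ansatz $\Phi(x,y) = \sin\bigl(\pi(y-f_1(x))/h(x)\bigr)$, which is the unit $L^2(f_1(x),f_2(x))$-normalised (up to the factor $\sqrt{h(x)/2}$) ground state of the fiber Dirichlet Laplacian $-\partial_y^2$ on the cross-section at $x$, with fiber eigenvalue $V(x) := \pi^2/h(x)^2$. Projecting $u(x,y)$ onto $\Phi(x,\cdot)$ inside each fiber produces a function $\psi(x)$ on the interval $I$ of length $L$ from Definition \ref{defn:L}. A direct computation using that $h$ is concave with $|h'|\leq 2L^{-3}$ on $I'$ (see \eqref{eqn:GJ-h}) shows that $\psi$ satisfies an approximate Schr\"odinger equation
\begin{align*}
-\psi''(x) + V(x)\psi(x) = \lambda\psi(x) + R(x),
\end{align*}
with a remainder $R(x)$ that is small (of size $O(L^{-3})$ in the relevant norms) because all $x$-derivatives of $\Phi$ involve factors of $h'$ or $h''$. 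Standard Agmon/Grieser--Jerison energy estimates then give an exponentially small bound on the transverse component $u - \psi\cdot\Phi$ of $u$ in the region where $u$ is close to its maximum.

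Next I would analyse the one-dimensional problem. Since $1-L^{-2}\leq h\leq 1$ on $I$, the potential $V$ on $I'$ satisfies $0\leq V(x) - \pi^2 \leq 2\pi^2 L^{-2}$, so the well has depth $O(L^{-2})$; combined with concavity of $h$ and the definition of $L$, $V(x) - V(x_1)$ is also bounded below by a constant times $L^{-2}$ once $|x-x_1|$ is comparable to $L$. A rescaling $x = x_1 + L\xi$ converts the equation into a regular Schr\"odinger problem with a bounded, convex effective potential whose ground state is a smooth function $\tilde\psi(\xi)$ with strictly negative second derivative at its maximum. Pulling back, $\psi$ attains its maximum at a point within $O(1)$ of $x_1$ and satisfies
\begin{align*}
\frac{1}{C}L^{-2} \leq -\psi''(x) \leq CL^{-2}
\end{align*}
uniformly in a neighbourhood of size $cL$ around its maximum. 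Combining this with $\partial_x^2\Phi = O(L^{-3})$ and the smallness of the transverse remainder yields the stated two-sided bound on $-\partial_x^2 u(x,y)$ in $\{u\geq 1-\delta_0\}$, provided $\delta_0$ is small enough that this superlevel set is contained in the harmonic region.

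Finally, the projection bound follows by applying the Hessian estimate at the maximum in one direction: Taylor expanding $u$ at $(x_1,y_1)$ gives, for $(x,y)$ in the superlevel set $\{u\geq 1-\delta\}$,
\begin{align*}
\delta \geq u(x_1,y_1) - u(x,y) = \tfrac{1}{2}\bigl(-\partial_x^2 u\bigr)(x-x_1)^2 + \text{(other nonneg.\ terms)} + O\bigl(|x-x_1|^3\bigr),
\end{align*}
which, together with the two-sided bound $|\partial_x^2 u|\asymp L^{-2}$, yields $|x-x_1|\leq C\sqrt{\delta}\,L$. The matching lower bound on $J_\delta$ follows by constructing an explicit $(x,y)$ in the superlevel set using the one-dimensional eigenfunction: along the line $y = \tilde y(x)$ chosen to track the fiber maximum of $\Phi(x,\cdot)$, $u(x,\tilde y(x)) \geq 1 - cL^{-2}(x-x_1)^2$ by the reverse Taylor estimate. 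The main obstacle throughout is the uniform control of the fiber-reduction remainder $R$ and of the transverse piece of $u$: this requires showing that the eigenvalue gap between the first and second fiber modes (which is $\asymp 1$) dominates the $O(L^{-2})$ width of the effective well, so that elliptic/Agmon bootstraps close, and that the bounds are independent of $\Omega$ in the precise sense of Remark \ref{rem:constant}.
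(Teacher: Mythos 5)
This proposition is not proved in the present paper: it is quoted directly as Theorem~1.2 of \cite{Be}, and the paper gives no argument for it. There is therefore no ``paper's own proof'' to compare against; what follows is an assessment of your sketch on its own terms.

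Your overall strategy --- the Grieser--Jerison fiber reduction with the transverse ansatz $\Phi(x,y)=\sin\bigl(\pi(y-f_1(x))/h(x)\bigr)$, the effective potential $V(x)=\pi^2/h(x)^2$, the $O(L^{-2})$-deep well after rescaling by $L$, and the matching of the one-dimensional Hessian to that of $u$ --- is the natural one and is consistent with the framework that \cite{Je}, \cite{GJ} and \cite{Be} use. The discussion of the spectral-gap/Agmon bootstrap is in the right spirit, and the lower bound on $J_\delta$ by evaluating along the fiber maximum is a sound idea.

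Two genuine gaps remain. First, the Taylor-expansion argument for the \emph{upper} bound on $J_\delta$ is only local: it uses the two-sided Hessian estimate, which you (correctly) only claim in $\{u\geq 1-\delta_0\}$ for some small absolute $\delta_0$, whereas the proposition asserts $J_\delta\leq C\sqrt{\delta}\,L$ for all $0<\delta<\tfrac12$. For $\delta$ between $\delta_0$ and $\tfrac12$ the Taylor expansion has no reach, and the cubic error $O(|x-x_1|^3)$ you write is itself uncontrolled at scale $|x-x_1|\sim\sqrt{\delta}\,L$ unless you separately show $|\partial_x^3 u|\lesssim L^{-3}$ there. The standard fix is to argue globally via log-concavity: $u$ is log-concave (\cite{BL}), so if you know $\{u\geq 1-\delta_0\}$ has $x$-width $\leq C_0\sqrt{\delta_0}\,L$ then concavity of $\log u$ along horizontal lines forces $u$ to decay at least like $\exp(-c(x-x_1)^2/L^2)$, which gives the upper bound for the whole range $\delta<\tfrac12$; the lower bound similarly should come from the one-dimensional profile $\psi$, not from Taylor at the max. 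Second, your claim that ``all $x$-derivatives of $\Phi$ involve factors of $h'$ or $h''$'' is too glib: the change of variables also introduces $f_1'$ and $f_1''$, and the bound $|f_i'|\leq|h'|$ is not automatic for a general convex $\Omega$; Grieser--Jerison handle this by an additional straightening of the domain (or by controlling $f_1',f_2'$ separately near $I$), and you would need to make this explicit to justify the $O(L^{-3})$ size of the remainder $R$.
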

This shows that the first Dirichlet eigenfunction of a convex domain is strictly concave in a neighbourhood of its maximum, while is only log-concave throughout $\Omega$ (\cite{BL}). This also demonstrates a striking difference between the behaviour of the first eigenfunction and the torsion function: From Proposition \ref{prop:eigenfunction-max}, the eccentricity of the level sets  of $u$ are always bounded between $N^{1/3}$ and $N$. This is in contrast to the case of the torsion function of the rectangle, where from \eqref{eqn:torsion-rect} we see that the eccentricity can become exponentially large in $N$. In fact as shown in \cite{SS}, this is the largest possible eccentricity for the level sets of the torsion function on a convex planar domain.  
\\
\\
We will also use a simple consequence of Theorem \ref{thm:Approx} on the maximal value of $v$.
\begin{lem} \label{lem:max-value1}
The torsion function $v(x,y)$ satisfies $v(x,y)\leq \tfrac{1}{8}$ for all $(x,y)\in\Omega$. If in addition \emph{Property \ref{pro:max}} holds for some $\delta = \delta(M)$, then $v^*$ also satisfies the lower  bound
\begin{align*} 
v^* \geq \tfrac{1}{8} - \tfrac{1}{100}\delta.
\end{align*}
\end{lem}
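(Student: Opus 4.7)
For the upper bound $v \leq \tfrac{1}{8}$, the plan is to compare $v$ with the explicit torsion function of the infinite strip. The normalisation forces $0 \leq f_1(x) \leq f_2(x) \leq 1$, so $\Omega \subset \R \times [0,1]$, and the function $w(x,y) = \tfrac{1}{2}y(1-y)$ satisfies $\Delta w = -1$ on the whole strip. Hence $w - v$ is harmonic in $\Omega$ and on $\pa\Omega$ equals $w \geq 0$ since $0 \leq y \leq 1$. The maximum principle then gives $v \leq w \leq \tfrac{1}{8}$ throughout $\Omega$.

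For the lower bound, the idea is to evaluate $v_1$ at a candidate close to the geometric centre of the thickest cross-section of $\Omega$ and transfer the information to $v$ via Theorem \ref{thm:Approx}. Take $\tilde{x} = \bar{x}$ and $y = \tfrac{1}{2}$. The bounds $0 \leq f_1(\bar{x}) \leq f_2(\bar{x}) \leq 1$ together with $f_2(\bar{x}) - f_1(\bar{x}) = h(\bar{x}) = 1$ pin down $f_1(\bar{x}) = 0$ and $f_2(\bar{x}) = 1$, so that
\[
v_1(\bar{x},\tfrac{1}{2}) \;=\; \tfrac{1}{2}\cdot\tfrac{1}{2}\cdot\tfrac{1}{2} \;=\; \tfrac{1}{8}.
\]
Fixing any $c^* \leq \tfrac{1}{4}$ places $y = \tfrac{1}{2}$ inside $[f_1(\bar{x}) + c^*, f_2(\bar{x}) - c^*]$, and the hypothesis $h(\bar{x}) \geq \tfrac{1}{2}$ is trivial, so Theorem \ref{thm:Approx} yields
\[
\left|v(\bar{x}, \tfrac{1}{2}) - \tfrac{1}{8}\right| \leq \text{Error}(\bar{x}).
\]
Since $h$ is concave with $h(\bar{x}) = 1$ and $h(x_{\pm}) = 1 - 2\delta$, we have automatically $\bar{x} \in [x_{-}, x_{+}]$, and Property \ref{pro:max} then gives $\text{Error}(\bar{x}) \leq \tfrac{1}{100}\delta$. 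Thus $v^* \geq v(\bar{x}, \tfrac{1}{2}) \geq \tfrac{1}{8} - \tfrac{1}{100}\delta$, as required.

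Neither step involves genuine difficulty, and the upper bound is essentially a one-line maximum principle comparison. The only real bookkeeping point, and the closest thing to an obstacle, is ensuring that the constants $c_1, C_1$ implicit in $\text{Error}(\bar{x})$ are the ones produced by Theorem \ref{thm:Approx} for the specific value of $c^*$ used here; Property \ref{pro:max} is phrased in terms of the same $\text{Error}$ quantity, so consistency is automatic, but one should be explicit about the choice of $c^*$ to avoid any ambiguity.
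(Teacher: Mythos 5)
Your proof is correct and follows essentially the same route as the paper's: the upper bound is the same maximum-principle comparison with $\tfrac{1}{2}y(1-y)$, and the lower bound is the same evaluation of $v_1(\bar{x},\tfrac{1}{2}) = \tfrac{1}{8}$ combined with Theorem \ref{thm:Approx} and Property \ref{pro:max}. You have merely spelled out the evaluation of $v_1$ at $(\bar{x},\tfrac{1}{2})$ in slightly more detail, which is harmless.
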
 
\begin{proof}{Lemma \ref{lem:max-value1}}
Since $v_1(\bar{x},\tfrac{1}{2}) = \tfrac{1}{8}$, the lower bound on $v^*$ follows immediately from Theorem \ref{thm:Approx} and Property \ref{pro:max}. To obtain the upper bound we show that $0\leq v(x,y)\leq \tfrac{1}{8}$ in $\Omega$: As $\Delta v(x,y) = -1$, and $v$ vanishes on $\pa\Omega$, $v$ is non-negative by the maximum principle. The function
\begin{align*}
w(x,y) = \tfrac{1}{2}y(1-y) - v(x,y)
\end{align*}
is harmonic in $\Omega$ and non-negative on $\pa\Omega$. Therefore, again by the maximum principle, we have $w(x,y)\geq 0$, and so $v(x,y) \leq \tfrac{1}{8}$. (Note that we have not used Property \ref{pro:max} to obtain this upper bound.)
\end{proof}
We first use these propositions and Lemma \ref{lem:max-value1} together with Theorem \ref{thm:Approx} to obtain an upper bound on $v^*-v(x,y)$ around the maximum of the eigenfunction. The proposition below, which holds for any convex planar domain, can also be viewed as showing that the maximum of the eigenfunction can only occur in the part of $\Omega$ where the torsion function is close to its own maximal value. 
\begin{prop} \label{prop:cons2}
Let $\Omega$ be a convex planar domain, with $L$ as in \emph{Definition \ref{defn:L}}, and let $(x_1,y_1)$ be the point where the eigenfunction $u$ attains its maximum.  Then, there exists an  absolute constant $C$  such that 
\begin{align*}
0\leq v^* - v(x,y) \leq CL^{-2}. 
\end{align*}
for all points $(x,y)$ with $|x-x_1|\leq\tfrac{1}{5}L$, $|y-y_1|\leq L^{-1}$.
\end{prop}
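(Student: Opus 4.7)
The lower bound $v^* - v(x,y) \ge 0$ is immediate from the definition of $v^*$, so the plan is to establish the matching upper bound in the form $v(x,y) \ge \tfrac{1}{8} - CL^{-2}$, which combined with the bound $v^* \le \tfrac{1}{8}$ from Lemma \ref{lem:max-value1} closes the estimate. The route is to approximate $v$ by $v_1(x,y) = \tfrac{1}{2}(y-f_1(x))(f_2(x)-y)$ on the prescribed rectangle via Theorem \ref{thm:Approx}, bound $v_1$ from below by an explicit computation, and use the localization of $(x_1,y_1)$ inside the thick region of $\Omega$ supplied by Proposition \ref{prop:GJ}.

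First I would set up the geometry. Writing $I = [x_{-},x_{+}]$ for the maximal interval on which $h \ge 1-L^{-2}$ (so $|I| = L$) and $I' = [x_{-}+L/4,x_{+}-L/4]$ for its concentric half, Proposition \ref{prop:GJ} places $x_1 \in I'$ and $|y_1 - \tfrac{1}{2}| \le CL^{-3/2}$. Since $x_{-} \ge a$ and $x_{+} \le b$, one immediately obtains $d(x_1) \ge L/4$, and therefore $d(\tilde{x}) \ge L/20$ and $\tilde{x} \in [x_{-}+L/20,\,x_{+}-L/20] \subset I$ for any $\tilde{x}$ with $|\tilde{x}-x_1|\le L/5$. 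Because $h \in [1-L^{-2},1]$ on $I$, the normalization $0 \le f_1 \le f_2 \le 1$ forces $f_1(\tilde{x}) \in [0,L^{-2}]$ and $f_2(\tilde{x}) \in [1-L^{-2},1]$, and $|h(x')-h(\tilde{x})|\le L^{-2}$ for any two points $x',\tilde{x}\in I$. These structural estimates are the engine of the argument.

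The hard step will be showing $\mathrm{Error}(\tilde{x}) \le CL^{-2}$. The first term $C_1 e^{-c_1 d(\tilde{x})}$ is dominated by $C_1 e^{-c_1 L/20}$, which is absorbed into $CL^{-2}$ for $L$ large. For the supremum term I would split the range $|x'-\tilde{x}|\le \tfrac{3}{4}d(\tilde{x})$ at a threshold of order $L$ (e.g.\ $L/80$, which is safely below $\tfrac{3}{4}d(\tilde{x})$): on $|x'-\tilde{x}|\le L/80$ the point $x'$ still lies in $I$ so $|h(x')-h(\tilde{x})|\le L^{-2}$, while on $|x'-\tilde{x}| > L/80$ the factor $e^{-c_1|x'-\tilde{x}|}$ is $\le e^{-c_1 L/80}$ and dominates the trivial bound $|h(x')-h(\tilde{x})|\le 1$. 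Together these give $\mathrm{Error}(\tilde{x})\le CL^{-2}$. Since $|y-\tfrac{1}{2}|\le L^{-1}+CL^{-3/2}$ while $f_1$ and $1-f_2$ are at most $L^{-2}$, the hypothesis $y\in[f_1(\tilde{x})+c^*,f_2(\tilde{x})-c^*]$ of Theorem \ref{thm:Approx} holds with $c^* = \tfrac{1}{4}$ for $L$ large, yielding $|v(\tilde{x},y) - v_1(\tilde{x},y)|\le CL^{-2}$.

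The final step is to bound $v_1$ from below. Writing $s = y - \tfrac{1}{2}(f_1(\tilde{x})+f_2(\tilde{x}))$, one has the identity $v_1(\tilde{x},y) = \tfrac{1}{8}h(\tilde{x})^2 - \tfrac{1}{2}s^2$. The midpoint lies within $L^{-2}/2$ of $\tfrac{1}{2}$ by the bounds on $f_1,f_2$, so $|s|\le L^{-1}+CL^{-3/2}+L^{-2}/2 \le CL^{-1}$, and combined with $h(\tilde{x})^2 \ge (1-L^{-2})^2 \ge 1 - 2L^{-2}$ this gives $v_1(\tilde{x},y)\ge \tfrac{1}{8} - CL^{-2}$. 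Assembling the pieces yields $v(\tilde{x},y)\ge \tfrac{1}{8}-CL^{-2} \ge v^*-CL^{-2}$, and the case of bounded $L$ is handled trivially by enlarging $C$. The only conceptual subtlety is that the interval $I$ is not a priori centered in $[a,b]$, so $d(\tilde{x})$ cannot be bounded in terms of the diameter $N$ alone; it is precisely the containment $x_1 \in I'$ that forces $d(\tilde{x})$ to scale with $L$ and thereby makes the error estimate usable.
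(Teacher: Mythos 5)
Your proof is correct and follows essentially the same route as the paper's: use Proposition \ref{prop:GJ} to place $(x_1,y_1)$ well inside the thick part of $\Omega$, bound $\mathrm{Error}(\tilde{x})$ by $CL^{-2}$, lower-bound $v_1$ near $(1/2)$ by $\tfrac{1}{8}-CL^{-2}$, and combine with the upper bound $v^*\leq \tfrac{1}{8}$ from Lemma \ref{lem:max-value1}. The only (inessential) difference is that you bound the supremum term in $\mathrm{Error}(\tilde{x})$ by splitting at a scale of order $L$ and using $|h(x')-h(\tilde{x})|\leq L^{-2}$ on $I$ directly, whereas the paper invokes the derivative bound $|h'(x)|\leq 2L^{-3}$ from \eqref{eqn:GJ-h}; both routes come straight from the definition of $L$ and give the same conclusion.
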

\begin{rem} \label{rem:cons2}
By \emph{Proposition \ref{prop:eigenfunction-max}}, at points $(x,y)$ with $|x-x_1|= \tfrac{1}{5}L$, the eigenfunction $u(x,y)$ has decreased an absolute amount from its maximum of $1$.
\end{rem}
\begin{proof}{Proposition \ref{prop:cons2}}
From Proposition \ref{prop:GJ} we have $x_1\in I'$, and $\left|y_1-\tfrac{1}{2}\right| \leq CL^{-3/2}$, and so for points $(x,y)$ in the statement of the lemma, $x\in I$, $|y-\tfrac{1}{2}|\leq CL^{-1}$. Therefore, we have
\begin{align*}
v_1(x,y) \geq \tfrac{1}{8} - \tilde{C}_1L^{-2}.
\end{align*}
Combining this with $v^*\leq \tfrac{1}{8}$ (see Lemma \ref{lem:max-value1}), and the fact that by \eqref{eqn:GJ-h} the error from Theorem \ref{thm:Approx} at $(x,y)$ can be bounded by $\tilde{C}_2L^{-2}$ gives the required upper bound on $v^* - v(x,y)$.
\end{proof}
We will now consider specific families of convex domains, where we can use Theorems \ref{thm:Approx} and \ref{thm:second-max} to obtain examples of contrasting behaviour of the torsion function and first eigenfunction near their respective maximums. For each $N\geq2$, we define the family of domains $\Omega^{(N)}_{1}$ by
\begin{align} \label{eqn:Omega-1,N}
\Omega^{(N)}_{1} = \{(x,y)\in\mathbb{R}^2: 0 \leq x \leq N, 0 \leq y \leq h^{(N)}_1(x)\},
\end{align}
where
\begin{figure} 
\begin{tikzpicture}
\draw [thick, <->] (1,3) -- (1,0) -- (15,0);
\draw [dashed] (1,2.5) -- (15,2.5);
\draw [thick] (13,0) -- (13,2.2);
\draw [thick] (3.5,2.5) -- (13,2.2);
\draw [thick] (1,0) -- (3.5,2.5);
\draw [thick] (1,0) -- (13,0);
\draw [thick] (3.5,-0.1) node [below] {$N^{1/2}$} -- (3.5,0.1);
\draw [thick] (13,-0.1) node [below] {$N$}  -- (13,0.1);
\node [below right] at (15,0) {$x$};
\node [left] at (1,3) {$y$};
\node [left] at (1,2.5) {$1$};
\node [left] at (1,0) {$0$};
\node at (10,2) {$y=h_1^{(N)}(x)$};
\draw [fill] (4,1.2) circle [radius=.05];
\draw [fill] (6,1.1) circle [radius=.05];
\node [below] at (4,1.2) {$(x^{*(N)},y^{*(N)})$};
\node [right] at (6,1.1) {$(x_1^{(N)},y_1^{(N)})$};
\end{tikzpicture}
\caption{The domain $\Omega^{(N)}_1$ and locations of the respective maxima} \label{fig:torsion2}
\end{figure}
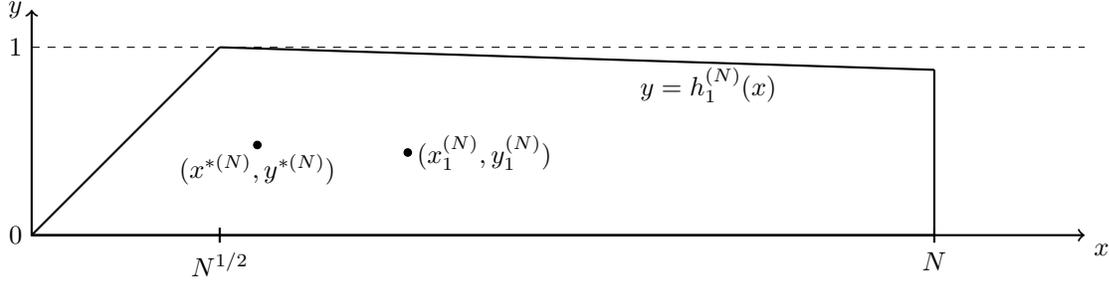
$$ h^{(N)}_1(x) = 
\begin{cases}
\frac{x}{N^{1/2}}: & 0 \leq x \leq N^{1/2}, \\
1-\frac{x-N^{1/2}}{N^3}: & N^{1/2}\leq x \leq N.
\end{cases}
$$
See Figure \ref{fig:torsion2} for the domains $\Omega_1^{(N)}$. Let $v_1^{(N)}(x,y)$ and $u_1^{(N)}(x,y)$ be the torsion function and first Dirichlet eigenfunction of $\Omega_1^{(N)}$ respectively, normalised as in the rest of the paper. Let $(x^{*(N)},y^{*(N)})$ and $(x_1^{(N)},y_1^{(N)})$ be the points where $v_1^{(N)}$ and $u_1^{(N)}$ attain their maxima. Figure 3 in \cite{RS} provides an example of a non-convex planar domain where the respective maxima are not close together, in contrast to the examples given in \cite{BLMS}. Although Proposition \ref{prop:cons2} shows that $v_1^{(N)}(x,y)$ must be close to its maximal value at $(x,y) = (x_1^{(N)},y_1^{(N)})$, this does not imply that the maxima themselves are close together.  In fact, for the family of convex domains $\Omega^{(N)}_{1}$, the values of $x^{*(N)}$ and $x_1^{(N)}$ are separated by a multiple of the diameter, uniformly as $N$ tends to infinity:
\begin{prop} \label{prop:cons1}
There exist absolute constants $C$, $c>0$ such that for all $N\geq C$, we have
\begin{align*}
\left| x^{*(N)} - x_1^{(N)} \right| \geq cN. 
\end{align*}
Moreover, the eigenfunction $u$ is uniformly bounded away from its maximal value at the maximum of $v$ in the sense that
\begin{align*}
1 - u(x^{*(N)},y^{*(N)})  \geq c .
\end{align*}
\end{prop}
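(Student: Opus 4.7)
The plan is to locate the two maxima separately: \emph{Proposition \ref{prop:GJ}} will trap $x_1^{(N)}$ inside the central half of the bulk part $[N^{1/2},N]$, while \emph{Theorem \ref{thm:Approx}} will pin $x^{*(N)}$ within $O(\log N)$ of the corner $x=N^{1/2}$. A separation of order $N$ then follows, and \emph{Proposition \ref{prop:eigenfunction-max}} delivers the uniform gap for $u_1^{(N)}$.

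For $x_1^{(N)}$ I would first compute the length scale $L$ from \emph{Definition \ref{defn:L}}. Using the piecewise form of $h_1^{(N)}$, the set $\{x : h_1^{(N)}(x) \geq 1-L^{-2}\}$ equals $[N^{1/2}(1-L^{-2}), \min\{N^{1/2}+N^3L^{-2}, N\}]$, and solving ``length $=L$'' gives $L = N - N^{1/2}+O(N^{-3/2})$ with the corresponding interval $I$ capped on the right at $N$. Hence the concentric sub-interval $I'$ of half length lies inside $[c_0 N, N]$ for an absolute $c_0>0$, and \emph{Proposition \ref{prop:GJ}} forces $x_1^{(N)} \geq c_0 N$ for $N$ large.

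For $x^{*(N)}$, I would apply \emph{Theorem \ref{thm:Approx}} on both sides of the corner, using $v_1$ as in \eqref{eqn:v1}. At $\tilde x = N^{1/2}+a$ with $a\geq 0$, the supremum in $\text{Error}(\tilde x)$ is dominated by the triangle side, where $h_1^{(N)}$ decays linearly with slope $N^{-1/2}$; a direct calculation in $t = \tilde x - x$ yields $\text{Error}(\tilde x) \leq Ce^{-c_1 a}/N^{1/2}$. Combined with $v_1(\tilde x, 1/2) = \tfrac{1}{8} - a/(4N^3)$ this yields the lower bound
\begin{align*}
v_1^{(N)}(x^{*(N)}, y^{*(N)}) \geq v_1^{(N)}(\tilde x, 1/2) \geq \tfrac{1}{8} - \frac{a}{4N^3} - \frac{Ce^{-c_1 a}}{N^{1/2}}.
\end{align*}
On the other hand, for any $(x,y) \in \Omega_1^{(N)}$ with $x > N^{1/2}+K$ the pointwise bound $v_1 \leq (h_1^{(N)}(x))^2/8$ and the analogous error estimate at $x$ give
\begin{align*}
v_1^{(N)}(x,y) \leq \tfrac{1}{8} - \frac{K}{4N^3} + \frac{Ce^{-c_1 K}}{N^{1/2}}.
\end{align*}
Setting $a = K/2$ and $K = C_*\log N$ with $C_*$ a sufficiently large absolute constant, the first lower bound strictly exceeds the second upper bound, contradicting the hypothesis that the maximum lies at such an $x$. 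A symmetric argument on the triangle side, where $h_1^{(N)}$ decays with the much larger slope $N^{-1/2}$, gives $x^{*(N)} \geq N^{1/2} - C'$. Combining, $|x^{*(N)}-x_1^{(N)}| \geq c_0 N - N^{1/2} - C_*\log N \geq cN$ for $N$ large, proving the first assertion.

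For the second, suppose $u_1^{(N)}(x^{*(N)}, y^{*(N)}) \geq 1-\delta$. Then $(x^{*(N)}, y^{*(N)})$ lies in the convex superlevel set $\{u_1^{(N)}\geq 1-\delta\}$, whose projection onto the $x$-axis is an interval of length at most $C\sqrt{\delta}L \leq C\sqrt{\delta}N$ containing $x_1^{(N)}$, by \emph{Proposition \ref{prop:eigenfunction-max}}. Hence $|x^{*(N)}-x_1^{(N)}| \leq C\sqrt{\delta}N$, and the first assertion forces $\delta \geq (c/C)^2$, giving $u_1^{(N)}(x^{*(N)}, y^{*(N)}) \leq 1 - (c/C)^2$. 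The main obstacle is the upper bound $x^{*(N)} \leq N^{1/2}+O(\log N)$: $\text{Error}(N^{1/2})$ is only $O(N^{-1/2})$, which far exceeds the $O(N^{-2})$ variation of $v_1$ in the bulk, so one must shift the evaluation point by $\Theta(\log N)$ and exploit the exponential factor $e^{-c_1 a}$ in the error to beat the $N^{-1/2}$.
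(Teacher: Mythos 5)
Your proposal is correct, and it reaches the separation $|x^{*(N)}-x_1^{(N)}| \geq cN$ by a genuinely different route from the paper, though both rest on the same three ingredients (\emph{Proposition \ref{prop:GJ}} to place $x_1^{(N)}$ in $I'\subset[c_0N,N]$, \emph{Theorem \ref{thm:Approx}} to control $v$, and \emph{Proposition \ref{prop:eigenfunction-max}} for the $u$-gap at the end). The paper does not attempt to locate $x^{*(N)}$ precisely. Instead it picks the single comparison point $(2N^{1/2},\tfrac12)$, at distance $N^{1/2}$ from the corner, where the triangle-side contribution to $\text{Error}$ is suppressed by $e^{-c_1N^{1/2}}$ and so $\text{Error}(2N^{1/2})=O(N^{-3})$; it then observes that on the enlarged interval $\tilde I'=[z_{1,N}-\tfrac12 c_2N,z_{2,N}]$ the function $v_1$ has already dropped by $\Omega(N^{-2})$, so $v<v(2N^{1/2},\tfrac12)<v^*$ on the whole slab $\{x\in\tilde I'\}$. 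This excludes $x^{*(N)}$ from $\tilde I'$, and (implicitly via convexity of the superlevel sets of $v$, which keeps the superlevel set containing both $(2N^{1/2},\tfrac12)$ and the max on the same side of the slab) forces $x^{*(N)}$ to lie to the left of $\tilde I'$, giving separation at once. Your approach instead slides the test point $\tilde x=N^{1/2}+a$ and exploits the factor $e^{-c_1a}$ to beat the $N^{-1/2}$ error, pinning $x^{*(N)}$ to within $O(\log N)$ of the corner: a finer statement than the paper proves, at the cost of a more delicate balancing of the error against the $O(N^{-3})$ drift of $v_1$. Your approach also neatly sidesteps the convexity-of-superlevel-sets step that the paper leaves implicit. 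One small caveat: your bound $\text{Error}(\tilde x)\leq Ce^{-c_1a}/N^{1/2}$ drops the $O(N^{-3})$ contribution coming from the slowly-decaying side $x>N^{1/2}$ of the domain; it is subdominant for $a\lesssim\log N$ and does not affect the conclusion, but should appear in a careful writeup since the comparison being made is precisely against quantities of size $N^{-3}\log N$. The argument for the second assertion is identical to the paper's.
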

\begin{proof}{Proposition \ref{prop:cons1}}
By the definition of the domains $\Omega_1^{(N)}$, there exists a constant $c_2>0$ such that $L\geq c_2N$ for all $N\geq2$, and the point $x= N^{1/2}$ is at a distance of $c_2N$ from the interval $I'$. Let the interval $I'$ be given by $[z_{1,N},z_{2,N}]$. Since by Proposition \ref{prop:GJ}, we have $x_1^{(N)} \in I'$, to establish the proposition, it is sufficient to show that for $N$ sufficiently large $x^{*(N)}\notin \tilde{I}'$, where 
\begin{align*}
\tilde{I}' = [z_{1,N}-\tfrac{1}{2}c_2N,z_{2,N}].
\end{align*}
We will show that for $x\in\tilde{I}'$, $v(x,y)$ is bounded by $v(2N^{1/2},\tfrac{1}{2})<v^*$: Using the notation of Theorem \ref{thm:Approx}, we see that
\begin{align*}
v_1(2N^{1/2},\tfrac{1}{2}) = \frac{1}{4}\left(\frac{1}{2} - N^{-5/2}\right),
\end{align*}
while there exists a constant $c_3>0$ such that $v_1(x,y) \leq \frac{1}{8} - c_3N^{-2}$  for all $(x,y)$ with $x\in\tilde{I}'$. At both $x = 2N^{1/2}$ and $x\in I'$, we can bound Error$(x)$ appearing in Theorem \ref{thm:Approx} by $CN^{-3}$. Thus, for $N$ sufficiently large, we do indeed have $v(x,y) \leq v(2N^{1/2},\tfrac{1}{2}) <v^*$  for $x\in\tilde{I}'$, as required. To obtain the lower bound on $1-u(x^{*(N)},y^{*(N)})$, we argue as follows: By Proposition \ref{prop:eigenfunction-max}, the projection of the superlevel set $\{(x,y)\in\Omega: u(x,y) \geq 1-\delta\}$ onto the $x$-axis is bounded above by $C\sqrt{\delta} N$. Therefore, by the estimate on $\left| x^{*(N)} - x_1^{(N)} \right|$, by  taking $\delta>0$ to be a sufficiently small absolute constant, we can ensure that the desired property
\begin{align*}
(x^{*(N)},y^{*(N)}) \notin \{(x,y)\in\Omega: u(x,y) \geq 1-\delta\}.
\end{align*}
holds. 
\end{proof}
From Proposition \ref{prop:eigenfunction-max}, we see that $-\pa_{x}^2u(x,y)$ is of the same order of magnitude (uniformly in $L$) in a whole superlevel set $\{(x,y)\in\Omega:u(x,y)\geq 1-\delta_0\}$, where $\delta_0>0$ is an absolute constant. This property does not necessarily hold for the torsion function. To see this, for each $N\geq2$, we define the domains $\Omega^{(N)}_{2}$ by
\begin{align} \label{eqn:Omega-2,N}
\Omega^{(N)}_{2} = \{(x,y)\in\mathbb{R}^2: |x| \leq \tfrac{1}{2}N, 0 \leq y \leq h^{(N)}_2(x)\},
\end{align}
where
$$ h^{(N)}_2(x) = 
\begin{cases}
1-\frac{|x|^2}{N^2}: &  |x| \leq N^{1/4},\\
(1- N^{-3/2})\left(1 - \frac{|x|-N^{1/4}}{\tfrac{1}{2}N-N^{1/4}} \right): & N^{1/4} \leq |x| \leq \tfrac{1}{2}N.
\end{cases}
$$
\begin{prop} \label{prop:cons3}
Let $v_2^{(N)}(x,y)$ be the torsion function of $\Omega_2^{(N)}$, normalised as in the rest of the paper, and with maximum at $(x^{*(N)},y^{*(N)})$. Then, there exist absolute constants $c$, $C$ such that for $N$ sufficiently large, we have
\begin{align*}
C^{-1}N^{-2} \leq -\pa_{x}^2v(x^{*(N)},y^{*(N)}) \leq CN^{-2},
\end{align*}
while the superlevel set $\{(x,y)\in\Omega_2^{(N)}:v(x,y) \geq v^* - cN^{-1/2}\}$ has diameter bounded above by $N^{1/2}$. This in particular ensures that
\begin{align*}
-\pa_{x}^2v(x,y) \geq 2C^{-2}cN^{-3/2}
\end{align*}
at some point $(x,y)$ in this superlevel set. 
\end{prop}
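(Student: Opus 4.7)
The plan is to combine Theorems \ref{thm:Approx} and \ref{thm:second-max} with a one-dimensional Taylor argument. By the $x\mapsto -x$ symmetry of $\Omega_2^{(N)}$ together with Makar-Limanov concavity of $v^{1/2}$ (which forces the maximum set to be convex), we may assume $x^{*(N)} = 0$. For the Hessian bound at the maximum, I would verify Property \ref{pro:max} via Remark \ref{rem:pro1}: on $|x|\leq N^{1/4}$ we have $h_2^{(N)}(x) = 1 - x^2/N^2$ exactly, so the remark applies with $\alpha_1 = 1/4$, $\beta_1 = 1$, $\gamma_1 = 2$, $C^* = 0$, yielding Property \ref{pro:max} with a fixed $M$ and $\delta = \tfrac{1}{2}M^2 N^{-2}$ for all large $N$. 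Theorem \ref{thm:second-max} applied with $n = (1,0)$ then gives $\alpha_n = \delta \sim N^{-2}$ and the two-sided bound $C^{-1}N^{-2} \leq -\partial_x^2 v(x^{*(N)}, y^{*(N)}) \leq CN^{-2}$.

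For the diameter bound on the superlevel set, I would evaluate $v_1$ on the linear portion of the graph. At $|x| = N^{1/4} + \tfrac{1}{3}N^{1/2}$ the linear formula for $h_2^{(N)}$ gives $h_2^{(N)}(x) = 1 - \tfrac{2}{3}N^{-1/2} + O(N^{-5/4})$, hence $v_1(x,y) \leq h_2^{(N)}(x)^2/8 \leq \tfrac{1}{8} - c_0 N^{-1/2}$ for an absolute $c_0>0$. Theorem \ref{thm:Approx} bounds $|v - v_1|$ at such points by $O(N^{-1})$, since $d(x) \sim N$ kills the exponential term while the global $O(N^{-1})$ Lipschitz regularity of $h_2^{(N)}$ bounds the supremum term by $O(N^{-1})$. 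Combined with $v^* \geq \tfrac{1}{8} - O(N^{-2})$ from Lemma \ref{lem:max-value1}, this produces $v^* - v(x,y) \geq c N^{-1/2}$ for a suitable absolute $c>0$ and all $(x,y)$ with $|x|\geq N^{1/4} + \tfrac{1}{3}N^{1/2}$. For $N$ large this confines the superlevel set $\{v \geq v^* - cN^{-1/2}\}$ to $|x|\leq \tfrac{1}{2}N^{1/2}$, and combined with the trivial $y$-extent bound $\leq 1$ the total diameter is at most $N^{1/2}$.

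The final lower bound at some interior point is a Taylor expansion. Setting $g(t) = v(t, y^{*(N)})$ so that $g(0) = v^*$ and $g'(0) = 0$, at a boundary point $(t_0, y^{*(N)})$ of the superlevel set on this horizontal line we have $g(t_0) = v^* - cN^{-1/2}$ with $|t_0| \leq \tfrac{1}{2}N^{1/2}$. The second-order Taylor identity $g(t_0) - g(0) = \tfrac{1}{2}t_0^2 g''(\xi)$ immediately yields
\begin{align*}
-\partial_x^2 v(\xi, y^{*(N)}) \;=\; \frac{2cN^{-1/2}}{t_0^2} \;\geq\; 2C^{-2}cN^{-3/2}
\end{align*}
at some $(\xi, y^{*(N)})$ in the superlevel set, with $C$ the diameter bound constant. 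The main technical point I anticipate is tracking the error in Theorem \ref{thm:Approx} across the corner of $h_2^{(N)}$ at $|x| = N^{1/4}$, where the height function is only Lipschitz; however, concavity plus the global $O(N^{-1})$ Lipschitz bound makes the supremum term $\sup e^{-c_1|x'-x|}|h(x')-h(x)|$ uniformly $O(N^{-1})$, comfortably below the $N^{-1/2}$ gap being exploited.
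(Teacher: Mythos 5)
Your proposal matches the paper's argument in all essential respects: you invoke Remark \ref{rem:pro1} with $\gamma_1=2$ to verify Property \ref{pro:max} (giving $\delta \sim N^{-2}$) and then apply Theorem \ref{thm:second-max} for the Hessian bound at the maximum, and you confine the superlevel set by evaluating $v_1$ at a point on the linear portion of $h_2^{(N)}$, controlling the Theorem \ref{thm:Approx} error by the global $O(N^{-1})$ Lipschitz bound on $h_2^{(N)}$. Your explicit Taylor argument for the final interior lower bound (which the paper leaves implicit) is correct, and the observation that the corner at $|x| = N^{1/4}$ is harmless because concavity plus the global Lipschitz constant bounds the supremum error term is exactly the point the paper relies on.

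Two small bookkeeping remarks. First, in applying Remark \ref{rem:pro1} you set $C^*=0$, but the remark requires $\tfrac{1}{C^*}\leq\beta_1\leq C^*$, so $C^*\geq 1$; since the bound $\left|h(x)-1+|x|^2/N^2\right|=0$ holds trivially on $|x|\leq N^{1/4}$, any $C^*\geq1$ works. Second, the constant $2C^{-2}c$ in the final bound comes from the paper's stated diameter bound $N^{1/2}$; your Taylor step in fact produces the slightly sharper $2cN^{-3/2}$ with the same reasoning, so the stated inequality follows a fortiori. Neither affects correctness.
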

\begin{proof}{Proposition \ref{prop:cons3}}
By Remark \ref{rem:pro1} (with $\gamma_1=2$),  Property \ref{pro:max} holds for $N$ sufficiently large, with $\delta$ comparable to $N^{-2}$.  Therefore, the estimate on $-\pa_{x}^2v(x^{*(N)},y^{*(N)})$ follows immediately from Theorem \ref{thm:second-max}.  For $|x| = \tfrac{1}{2}N^{1/2}$, and $N$ sufficiently large, we have $h^{(N)}_2(x) \leq 1- \tfrac{1}{2}N^{-1/2}$, and so 
\begin{align*}
v_1(\tfrac{1}{2}N^{1/2},y) \leq \tfrac{1}{4}\left(\tfrac{1}{2} - \tfrac{1}{2}N^{-1/2}\right).  
\end{align*}
At $|x| = 2N^{1/2}$, we can bound the error appearing in Theorem \ref{thm:Approx} by $C_1N^{-1}$, 
and by Lemma \ref{lem:max-value1} we have
\begin{align*}
\tfrac{1}{8} - \tfrac{1}{100}\delta \leq v^* \leq \tfrac{1}{8}.
\end{align*}
Thus, applying Theorem \ref{thm:Approx} gives
\begin{align*}
v(\tfrac{1}{2}N^{1/2},y) \leq v^* - \tfrac{1}{10}N^{-1/2}
\end{align*}
for $N$ sufficiently large. This guarantees the upper bound on the superlevel set $\{(x,y)\in\Omega_2^{(N)}:v(x,y) \geq v^* - cN^{-1/2}\}$, and hence also the upper bound on $-\pa_{x}^2v(x,y)$ at some point in this superlevel set.
\end{proof}

\section{The approximation of the torsion function by $v_1$} \label{sec:Approx}

In this section we establish the desired estimates on $v-v_1$ and $\pa_xv$ to prove Theorem \ref{thm:Approx}, which we first restate.
\begin{thm} \label{thm:Approx1}
Let $\tilde{x}\in[a,b]$ be given, with  $h(\tilde{x}) \geq \tfrac{1}{2}\max_{x\in[a,b]}h(x)=\tfrac{1}{2}$. Setting $d(\tilde{x}) = \min\{\tilde{x}-a,b-\tilde{x}\}$, given $c^*>0$, there exist constants $c_1$, $C_1$ depending only on $c^*$ such that
\begin{align*}
\left|v(\tilde{x},y) - v_1(\tilde{x},y)\right| \leq C_1e^{-c_1d(\tilde{x})} + C_1 \sup_{|x-\tilde{x}|\leq \tfrac{3}{4}d(\tilde{x})}e^{-c_1|x-\tilde{x}|}\left|h(x)-h(\tilde{x})\right|, \\
\left|\pa_{x}v(\tilde{x},y) \right| \leq C_1e^{-c_1d(\tilde{x})} + C_1 \sup_{|x-\tilde{x}|\leq \tfrac{3}{4}d(\tilde{x})}e^{-c_1|x-\tilde{x}|}\left|h(x)-h(\tilde{x})\right|,
\end{align*}
for all $y\in [f_1(\tilde{x})+c^*,f_2(\tilde{x})-c^*]$.
\end{thm}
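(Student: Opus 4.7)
The natural approach is to study $w = v - v_1$. A direct computation gives
\begin{align*}
\Delta v_1 = -1 + \tfrac{1}{2}\bigl[-f_1''(x)\bigl(f_2(x)-y\bigr) + f_2''(x)\bigl(y-f_1(x)\bigr) - 2 f_1'(x)f_2'(x)\bigr],
\end{align*}
so $\Delta w = -g$ where the source $g$ is (up to sign) the bracketed expression, while $w$ vanishes identically on the top and bottom graphs $y=f_i(x)$ because both $v$ and $v_1$ do. On the two short vertical sides $x=a$, $x=b$ of $\Omega$ one only has $|w|=|v_1|\leq \tfrac{1}{8}$, so the argument must propagate information inward from these sides with exponential loss.

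To make this quantitative I plan to build, for each $\tilde x$, an approximate Green's function $\tilde G(\tilde x,y;x',y')$ on the strip $\Omega_M=\Omega\cap\{|x-\tilde x|<M\}$ with $M=\tfrac{3}{4}d(\tilde x)$. The starting point is the exact Dirichlet Green's function of the rectangle $[\tilde x-M,\tilde x+M]\times[f_1(\tilde x),f_2(\tilde x)]$, whose separation-of-variables expansion has $n$-th $y$-mode decaying in $x'$ like $e^{-n\pi|x'-\tilde x|/h(\tilde x)}$. Using $h(\tilde x)\geq \tfrac{1}{2}$ and $y\in[f_1(\tilde x)+c^*,f_2(\tilde x)-c^*]$ this produces the pointwise estimate $|\tilde G|\leq C_1 e^{-c_1|x'-\tilde x|}$ with $c_1$ depending only on $c^*$. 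I would then modify this rectangular Green's function by replacing $f_1(\tilde x), f_2(\tilde x)$ with $f_1(x'), f_2(x')$ inside the sines, so that $\tilde G$ vanishes on the top and bottom of $\partial\Omega_M$ exactly; this is the content of the approximate Green's function alluded to after the theorem statement. The cost is that $\Delta_{x',y'}\tilde G$ differs from $\delta_{(\tilde x,y)}$ by a smooth error involving $f_i'$ and $f_i''$.

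Applying Green's identity on $\Omega_M$ with $\tilde G$ then expresses $w(\tilde x,y)$ as the sum of (i) a boundary integral on the two sides $|x'-\tilde x|=M$, (ii) the bulk integral of $\tilde G$ against $g$, and (iii) an error integral of $w$ against $\Delta_{x',y'}\tilde G-\delta_{(\tilde x,y)}$. Item~(i) is bounded by $\|w\|_{L^\infty(\Omega)}$ times $e^{-c_1 M}$, which produces the $C_1 e^{-c_1 d(\tilde x)}$ term. For (ii) and (iii) the key step is to integrate by parts twice in $x'$, transferring all second derivatives of $f_1,f_2$ onto the kernel $\tilde G$; using the formula for $\Delta\tilde G$ this rewrites the integrals as weighted integrals of $f_i(x')-f_i(\tilde x)$ against exponentially decaying kernels. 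A symmetrization argument exploiting the evenness of the leading Fourier mode in $y'-\tfrac{1}{2}(f_1(\tilde x)+f_2(\tilde x))$ should then combine the $f_1$ and $f_2$ contributions into the single height difference $|h(x')-h(\tilde x)|$, yielding the claimed supremum bound. The estimate on $|\partial_x v(\tilde x,y)|$ follows from differentiating the representation in $x$ and repeating the same analysis, noting that $\partial_x v_1(\tilde x,y)$ itself is controlled by $|f_i'(\tilde x)|$ and hence by the same $h$-difference term after a further integration by parts.

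The main obstacle I expect is this final cancellation step: since $|f_i(x)-f_i(\tilde x)|$ is not in general bounded by $|h(x)-h(\tilde x)|$, the proof cannot simply estimate each $f_i$-term separately but must rely on cancellation coming from the symmetry of $\tilde G$ in $y'$ about the horizontal centre of the rectangle. Organising the integration by parts and the definition of $\tilde G$ so that this symmetry survives all the error terms introduced by $\Delta\tilde G\neq\delta_{(\tilde x,y)}$ is the delicate part.
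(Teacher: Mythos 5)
Your proposal follows the same broad architecture as the paper: define an approximate Green's function by deforming the exact rectangle Green's function so that it vanishes on the curved top and bottom of $\Omega$, then use Green's identity to expand $v-v_1$ into a boundary term (giving $e^{-c_1d(\tilde{x})}$) plus bulk error integrals controlled by the domain's deviation from the rectangle. Your description of the modified kernel is slightly off: to vanish at both $y'=f_1(x')$ and $y'=f_2(x')$ one must both translate by $f_1(x')$ \emph{and} rescale by $h(\tilde{x})/h(x')$, which is exactly the substitution $e(x,y)=(y-f_1(x))h(\tilde{x})/h(x)$ in Definition \ref{defn:approx-Greens}. But the real issues are elsewhere, and two of your central steps either would fail or are not needed.

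The ``symmetrization argument'' you flag as the delicate final step is unnecessary. The normalization forces $f_1(\bar{x})=0$ and $f_2(\bar{x})=1$ at the point $\bar{x}$ where $h$ attains its maximum $1$; since $f_1\geq0$ is convex and $f_2\leq1$ is concave, $f_1'$ and $-f_2'$ have the same sign on each side of $\bar{x}$, whence $|f_i'(x)|\leq|h'(x)|$ pointwise. This is precisely what the paper uses in Lemma \ref{lem:deriv}, and it lets one control each $f_i$-contribution separately by $h'$; passing from $h'$ to the stated $|h(x)-h(\tilde{x})|$ bound uses only concavity of $h$ (Remark \ref{rem:Approx}). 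There is no cancellation between the $f_1$ and $f_2$ terms to organize, and trying to preserve a Fourier-mode symmetry in $y'$ through the coordinate change and error terms would add complication without benefit.

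Your item (iii), integrating $w$ against $\Delta_{x',y'}\tilde{G}-\delta_{(\tilde{x},y)}$, hides a genuine singular-kernel obstruction. That kernel has a Calder\'on--Zygmund-type non-integrable singularity at the diagonal, so you cannot estimate (iii) by $\|w\|_{L^\infty}$ times an $L^1$ norm of the kernel; and ``integrating by parts twice in $x'$'' puts two $x'$-derivatives on $\tilde{G}$, which is exactly the singular case. The paper's way around this is to prove $L^p$ bounds ($1<p<\infty$) for the first- and second-derivative kernel operators via elliptic regularity and duality (Proposition \ref{prop:Green2}), to arrange the integration by parts so that at most one derivative ever lands on $h$ (since only $h'$, not $h''$, is in $L^\infty$, cf.\ the displays \eqref{eqn:Gv2}--\eqref{eqn:Gv7}), and then to upgrade the resulting $L^p$ bound on $v-v_1$ to a pointwise bound using the exact Green's function $G_0$ of a width-$4$ slab together with an \emph{integrated} bound on $\sigma=-1-\Delta v_1$ exploiting the signs of $f_1''$ and $f_2''$ (Lemma \ref{lem:Laplacian}, Proposition \ref{prop:Green3}). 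Without some version of these two ingredients --- the $L^p$ singular-integral estimates and the $L^p\!\to\!L^\infty$ upgrade --- your sketch does not close.
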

\begin{rem} \label{rem:Approx} 
From now on let $\tilde{x}$ and $c^*>0$ be given as in the statement of the theorem. We can also restrict to the case where $d(\tilde{x})\geq1$. We will call a quantity $g(x',y')$ an \emph{acceptable error} if there exist constants $c_1$ and $C_1$ such that
\begin{align*}
\sup_{x':|x'-\tilde{x}|\leq1} \left|g(x',y')\right| \leq C_1e^{-c_1d(\tilde{x})} + C_1 \sup_{|x-\tilde{x}|\leq \tfrac{3}{4}d(\tilde{x})}e^{-c_1|x-\tilde{x}|}\left|h(x)-h(\tilde{x})\right| \quad \emph{for } y'\in [f_1(\tilde{x})+c^*,f_2(\tilde{x})-c^*]. 
\end{align*} 
By the concavity of $h(x)$, given $c_2>0$, there exist constants $C_1 = C_1(c_2)$,  $c_1 = c_1(c_2)>0$ such that for any $x'$ with $|x'-\tilde{x}|\leq \tfrac{1}{2}d(\tilde{x})$, we have
\begin{align*}
e^{-c_2|x'-\tilde{x}|}|h'(x')| \leq C_1\sup_{|x-\tilde{x}|\leq \tfrac{3}{4}d(\tilde{x})}e^{-c_1|x-\tilde{x}|}\left|h(x)-h(\tilde{x})\right| . 
\end{align*}
\end{rem}
As we mentioned in the Introduction, we will prove Theorem \ref{thm:Approx1} by using an approximate Green's function for the Laplacian on $\Omega$ to generate an expression for $v-v_1$ and $\pa_xv$ which we can then estimate. We will define it by using the exact Green's function for a rectangle: Let $R_{c,d}$ be the rectangle given by
\begin{align*}
R_{c,d} = [0,d]\times[0,c].
\end{align*}
$R_{c,d}$ has $L^2(R_{c,d})$-normalised Dirichlet eigenfunctions
\begin{align*}
u_{n_1,n_2}(x,y) = \frac{2}{\sqrt{cd}} \sin\left(\pi \frac{n_1 x}{d}\right) \sin\left(\pi \frac{n_2 y}{c}\right),
\end{align*}
with corresponding eigenvalues $\pi^2d^{-2}n_1^2 + \pi^2c^{-2}n_2^2$, for $n_1, n_2\in\mathbb{N}$. Therefore, the Green's function for $\Delta$ on $R_{c,d}$ is given by
\begin{align} \label{eqn:tildeG}
\tilde{G}_{c,d}(x,y;x',y') = -\frac{4}{\pi^2 cd}\sum_{n_1,n_2\geq1} \frac{1}{d^{-2}n_1^2 + c^{-2}n_2^2} u_{n_1,n_2}(x,y)u_{n_1,n_2}(x',y').
\end{align}
Let $\tilde{x}\in[a,b]$ be given as in the statement of the theorem. After a translation along the $x$-axis, we set $\tilde{x} = \tfrac{1}{2}d(\tilde{x})$, and we will use $\tilde{G}_{c,d}(x,y;x',y')$ with $c= h(\tilde{x})$, $d=d(\tilde{x})$ to approximate the Green's function of $\Delta$ on $\Omega$ near $\tilde{x}$. Letting 
\begin{align} \label{eqn:Omega-x}
\Omega(\tilde{x}) = \{(x,y)\in\Omega: |x-\tilde{x}|\leq \tfrac{1}{2}d(\tilde{x})\},
\end{align}
we make the following definition:
\begin{defn} \label{defn:approx-Greens} 
For $(x,y), (x',y') \in\Omega(\tilde{x})$, define $G^{\tilde{x}}(x,y;x',y')$ by
\begin{align} \label{eqn:G}
G^{\tilde{x}}(x,y;x',y') = \tilde{G}_{h(\tilde{x}),d(\tilde{x})}(x,e(x,y);x',e(x',y')), \quad \emph{where} \quad  e(x,y) = \frac{(y-f_1(x))h(\tilde{x})}{h(x)}. 
\end{align}
\end{defn}
In particular, this definition ensures that $G^{\tilde{x}}(x,y;x',y') = 0$ for $y = f_1(x)$, $f_2(x)$, and $y' = f_1(x')$, $f_2(x')$. In order to use this function in our study of $v$, we first need to establish properties of $G^{\tilde{x}}(x,y;x',y')$ both near and far from the diagonal $(x,y) = (x',y')$. In Propositions \ref{prop:Green1} and \ref{prop:Green2} we show the exponential decay of $G^{\tilde{x}}(x,y;x',y')$ as $|x-x'|$ increases, as well as the nature of the singularity at $(x,y) = (x',y')$.
\begin{prop} \label{prop:Green1}
For $(x,y)$, $(x',y')\in\Omega(\tilde{x})$, the Green's function $G^{\tilde{x}}(x,y;x',y')$ can be written as
\begin{align*}
G^{\tilde{x}}(x,y;x',y') = \sum_{n\geq 1} f_n(x;x') g_n(x,y) g_n(x',y').
\end{align*}
The function $g_n(x,y)$ is given by
\begin{align*}
g_n(x,y) = \sin \left(n\pi\frac{y-f_1(x)}{h(x)}\right),
\end{align*}
and the function $f_n(x;x')$ is given by
\begin{align*}
f_n(x;x') = \frac{1}{\pi n} \sum_{m=-\infty}^{\infty}\left(\exp\left\{-2\pi \frac{d(\tilde{x})}{h(\tilde{x})}n \left|\frac{x+x'}{2 d(\tilde{x})}+m\right| \right\} - \exp\left\{-2\pi \frac{d(\tilde{x})}{h(\tilde{x})}n \left|\frac{x-x'}{2 d(\tilde{x})}+m\right| \right\}  \right) . 
\end{align*}
In particular, $f_n(x;x')$ satisfies
\begin{align*}
\pa_{x}^2f_n(x;x') = \frac{n^2\pi^2}{h(\tilde{x})^2}f_n(x;x')
\end{align*}
for $x\neq x'$, and is the Green's function for $\left(\pa_{x}^2 - \tfrac{n^2 \pi^2}{h(\tilde{x})^2}\right)$ on $\left[\tilde{x} - \tfrac{1}{2}d(\tilde{x}),\tilde{x}+\tfrac{1}{2}d(\tilde{x})\right] = [0,d(\tilde{x})]$.
\end{prop}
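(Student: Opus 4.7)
The plan rests on two observations. First, the definition of $e(x,y)$ in Definition \ref{defn:approx-Greens} is engineered precisely so that substituting it into the sine in the $y$-direction from \eqref{eqn:tildeG} reproduces $g_n(x,y)$. Second, the remaining one-dimensional sum in the $x$-direction can be evaluated in closed form by Poisson summation (equivalently, the method of images), and the result matches the stated $f_n(x;x')$.

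I would start by substituting the spectral series \eqref{eqn:tildeG} for $\tilde{G}_{h(\tilde{x}),d(\tilde{x})}$ into Definition \ref{defn:approx-Greens}. Writing $c = h(\tilde{x})$ and $d = d(\tilde{x})$, the identity $\sin(\pi n_2 e(x,y)/c) = \sin(\pi n_2(y-f_1(x))/h(x)) = g_{n_2}(x,y)$, together with its counterpart at $(x',y')$, lets me pull the entire $y$-dependence out of the double sum. After relabelling $n = n_2$, I obtain
\begin{align*}
G^{\tilde{x}}(x,y;x',y') = \sum_{n \geq 1} g_n(x,y)\, g_n(x',y')\, A_n(x,x'),
\end{align*}
where
\begin{align*}
A_n(x,x') = -\frac{4}{\pi^2 cd}\sum_{n_1 \geq 1} \frac{\sin(\pi n_1 x/d)\sin(\pi n_1 x'/d)}{d^{-2}n_1^2 + c^{-2} n^2}.
\end{align*}
This $A_n$ is precisely the spectral expansion, in the Dirichlet eigenbasis of $\pa_x^2$ on $[0,d]$, of a constant multiple of the Green's function for $\pa_x^2 - \pi^2 n^2/c^2$.

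To identify $A_n$ with the image-charge expression for $f_n(x;x')$, I would apply $\sin A \sin B = \tfrac{1}{2}[\cos(A-B) - \cos(A+B)]$ to split $A_n$ into a difference of two sums of the form $\sum_{n_1 \geq 1} \cos(\pi n_1 \xi/d)/(n_1^2 + (dn/c)^2)$, with $\xi = x - x'$ and $\xi = x + x'$ respectively, and then evaluate each via the Poisson summation formula. This produces a sum over $m \in \mathbb{Z}$ of exponentials $\exp(-(\pi n/c)|\xi + 2md|) = \exp(-2\pi(d/c) n |\xi/(2d) + m|)$, matching the form of $f_n$ stated in the proposition once the prefactors are matched. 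The main obstacle is this Poisson-summation step together with the bookkeeping of constants (in particular a factor of $2/c$ coming from the fact that the $\sin(\pi n_1 x/d)$ are not $L^2$-normalised on $[0,d]$); once this is carried out, the remaining assertions follow immediately from the image-charge representation. Termwise, each exponential $\exp(-(\pi n/c)|x \pm x' + 2md|)$ is annihilated by $\pa_x^2 - \pi^2 n^2 /c^2$ away from its singularity, which gives the ODE for $x \neq x'$; and the Dirichlet conditions $f_n(0;x') = f_n(d;x') = 0$ follow by pairing the two inner sums via $m \to -m$ at $x=0$ and $m \to -m - 1$ at $x = d$, in each case turning the $x+x'$ sum into the $x-x'$ sum termwise.
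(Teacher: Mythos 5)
Your proposal is correct and follows essentially the same route as the paper: both extract the $x$-direction spectral sum from $\tilde G_{h(\tilde x),d(\tilde x)}$ after using $e(x,y)$ to identify the $y$-factors with $g_n$, and both convert it to the image-charge form via Poisson summation (the paper packages this as Lemma~\ref{lem:Poisson} applied to complex exponentials, which is equivalent to your product-to-sum cosine manipulation). The only cosmetic difference is that the paper reads off the ODE and boundary behaviour directly from the spectral definition of $f_n$, whereas you verify them afterwards from the image-charge sum; your termwise check of the Dirichlet conditions via $m\mapsto -m$ and $m\mapsto -m-1$ is a correct and slightly more explicit version of what the paper leaves implicit.
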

\begin{cor} \label{cor:Green1}
There exist absolute constants $c$, $C$ such that
\begin{align*}
\left|G^{\tilde{x}}(x,y;x',y') \right| \leq Ce^{-c|x-x'|}
\end{align*}
for all $(x,y)$, $(x',y')\in\Omega(\tilde{x})$, with $|x'-\tilde{x}|\leq1$, $|x-x'|\geq1$.
\end{cor}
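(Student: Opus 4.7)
The plan is to use the series expansion from Proposition \ref{prop:Green1}. Since $g_n(x,y) = \sin(n\pi(y-f_1(x))/h(x))$ satisfies $|g_n|\leq1$ pointwise, the triangle inequality gives
\begin{align*}
|G^{\tilde{x}}(x,y;x',y')| \leq \sum_{n\geq 1}|f_n(x;x')|,
\end{align*}
so it suffices to prove exponential decay of this one-variable sum in $|x-x'|$, uniformly over $x,x'\in[0,d(\tilde{x})]$.

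For each $n$, $f_n(\cdot;x')$ is a Dirichlet Green's function of the constant-coefficient ODE $\pa_x^2 - k_n^2$ on $[0,d(\tilde{x})]$, with $k_n = n\pi/h(\tilde{x})$. I would resum the image series in Proposition \ref{prop:Green1} into the standard closed form
\begin{align*}
f_n(x;x') = -\frac{A\sinh(k_n x_<)\sinh(k_n(d(\tilde{x})-x_>))}{k_n\sinh(k_n d(\tilde{x}))},\quad x_<=\min(x,x'),\ x_>=\max(x,x'),
\end{align*}
for some positive absolute constant $A$; the match is forced by uniqueness of the 1D Dirichlet Green's function (both expressions are continuous, satisfy $(\pa_x^2-k_n^2)f_n=0$ away from $x=x'$, vanish at $x=0$ and $x=d(\tilde{x})$, and produce the same jump in $\pa_x f_n$ at $x=x'$).

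Next I would bound $|f_n|$ using $\sinh(t)\leq\tfrac12 e^t$ on the numerator and $\sinh(k_n d(\tilde{x}))\geq\tfrac14 e^{k_n d(\tilde{x})}$ on the denominator; the latter holds because $k_n d(\tilde{x})\geq\pi>\log 2$, where I use $h(\tilde{x})\leq 1$ so that $k_n\geq n\pi$, and the reduction $d(\tilde{x})\geq 1$ from Remark \ref{rem:Approx}. The numerator and denominator exponentials combine to give
\begin{align*}
|f_n(x;x')| \leq \frac{B}{k_n}e^{-k_n|x-x'|}\leq \frac{B}{n\pi}e^{-n\pi|x-x'|}
\end{align*}
for an absolute constant $B$, using $h(\tilde{x})\leq 1$ once more in the last exponential.

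Finally, summing over $n\geq 1$ and using the inequality $-\log(1-t)\leq 2t$ for $t\leq\tfrac12$ together with $e^{-\pi|x-x'|}\leq e^{-\pi}<\tfrac12$ (valid because $|x-x'|\geq 1$), one obtains
\begin{align*}
\sum_{n\geq 1}|f_n(x;x')| \leq \frac{B}{\pi}\sum_{n\geq 1}\frac{e^{-n\pi|x-x'|}}{n} = -\frac{B}{\pi}\log(1-e^{-\pi|x-x'|}) \leq C e^{-\pi|x-x'|}
\end{align*}
with $C$ absolute, and the corollary follows with $c=\pi$. I do not anticipate a real obstacle: the argument is a direct consequence of the closed-form structure of $f_n$, and the only mild technicality is matching the normalisation between the paper's image series and the closed form, handled by the uniqueness argument above. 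Note that the hypothesis $|x'-\tilde{x}|\leq 1$ in Corollary \ref{cor:Green1} plays no role in this estimate — the bound in fact holds uniformly for all $(x,y),(x',y')\in\Omega(\tilde{x})$ with $|x-x'|\geq 1$.
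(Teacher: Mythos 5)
Your argument is correct and its overall strategy coincides with the paper's: both reduce to bounding $\sum_{n\geq1}|f_n(x;x')|$ via $|g_n|\leq1$, establish $|f_n(x;x')|\leq Cn^{-1}e^{-cn|x-x'|}$, and then sum. Where you differ is in the route to the bound on $f_n$: the paper estimates the image series from Proposition \ref{prop:Green1} directly, while you resum to the closed hyperbolic form and apply $\sinh$ inequalities. This is a pleasant variant, and it does make transparent your (correct) observation that the hypothesis $|x'-\tilde{x}|\leq1$ is not actually used. One small inaccuracy in the write-up: the constant $A$ in your closed form is not an absolute constant. Computing the jump of the Fourier-sine definition of $f_n$ at $x=x'$ gives $[\pa_x f_n] = 2/h(\tilde{x})$ (the closed form with $A=1$ has jump exactly $1$), so $A = 2/h(\tilde{x})$; since $\tfrac{1}{2}\leq h(\tilde{x})\leq1$ under the standing hypothesis, $A\in[2,4]$ is uniformly bounded and the estimate is unaffected, but you should state this rather than assert $A$ is absolute outright.
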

To prove Propostion \ref{prop:Green1} we need the following lemma:
 \begin{lem} \label{lem:Poisson}
 For each $a>0$, $\xi\in\R$, we have the equality
 \begin{align*}
 \sum_{m =-\infty}^{\infty} \frac{1}{a^2 + 4\pi^2m^2} e^{2\pi im\xi} = \sum_{m =-\infty}^{\infty} \frac{1}{2a}e^{-a|\xi+m|}.
 \end{align*}
 \end{lem}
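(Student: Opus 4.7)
The plan is to recognize this identity as a direct application of the Poisson summation formula. Concretely, set $f(\xi) = \frac{1}{2a} e^{-a|\xi|}$; then the right-hand side of the claimed identity is precisely the periodization $\sum_{m} f(\xi + m)$, while the left-hand side is a Fourier series whose $m$th coefficient should agree with $\hat{f}(m)$. So the whole lemma reduces to computing $\hat{f}$ and invoking Poisson.

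First I would compute the Fourier transform of $f$ with the convention $\hat{f}(k) = \int_{-\infty}^{\infty} f(\xi) e^{-2\pi i k \xi}\,d\xi$. Splitting the integral at $\xi=0$ and evaluating the two resulting exponential integrals gives
\begin{align*}
\int_{-\infty}^{\infty} e^{-a|\xi|} e^{-2\pi i k \xi}\,d\xi = \frac{1}{a - 2\pi i k} + \frac{1}{a + 2\pi i k} = \frac{2a}{a^2 + 4\pi^2 k^2},
\end{align*}
so that $\hat{f}(k) = \frac{1}{a^2 + 4\pi^2 k^2}$. This is exactly the coefficient appearing on the left-hand side of the lemma.

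Next I would apply the Poisson summation formula, which states that for sufficiently nice $f$,
\begin{align*}
\sum_{m \in \mathbb{Z}} f(\xi + m) = \sum_{m \in \mathbb{Z}} \hat{f}(m) e^{2\pi i m \xi}.
\end{align*}
Since $f(\xi) = \frac{1}{2a}e^{-a|\xi|}$ is continuous and decays exponentially (so both $f$ and $\hat{f}$ satisfy $|f(\xi)| + |\hat{f}(\xi)| \leq C(1+|\xi|)^{-2}$, comfortably enough for pointwise Poisson), both sides converge absolutely and the formula holds for every $\xi \in \mathbb{R}$. Substituting the expressions for $f$ and $\hat{f}$ gives precisely the claimed identity.

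The proof is essentially a one-line invocation of Poisson summation once $\hat{f}$ is in hand; there is no real obstacle. The only technical point worth mentioning is justifying pointwise (as opposed to distributional) equality, which follows from the exponential decay of $f$ and the $O(m^{-2})$ decay of $\hat{f}$ via the standard argument that the periodization is a continuous function whose Fourier coefficients are given by $\hat{f}(m)$.
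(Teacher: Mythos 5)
Your proof is correct and uses exactly the same approach as the paper: apply Poisson summation to $f(\xi)=\tfrac{1}{2a}e^{-a|\xi|}$ after computing $\hat f(k)=\tfrac{1}{a^2+4\pi^2k^2}$. The paper states this in one line; you have merely supplied the straightforward computation of $\hat f$ and the standard decay conditions justifying pointwise equality.
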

 \begin{proof}{Lemma \ref{lem:Poisson}}
The lemma follows immediately by applying the Poisson summation formula,
\begin{align*}
 \sum_{m =-\infty}^{\infty} \hat{f}(m)e^{2\pi im\xi} = \sum_{m =-\infty}^{\infty} f(\xi+m),
\end{align*}
to $f(x) = \frac{1}{2a}e^{-a|x|}$, with $\hat{f}(y) = \frac{1}{a^2 + 4\pi^2y^2}$.
\end{proof}
 \begin{proof}{Proposition \ref{prop:Green1}}
 For $x\neq x'$, we define $f_n(x;x')$ by
 \begin{align*}
 f_n(x;x') & = -\frac{4}{\pi^2 h(\tilde{x})d(\tilde{x})}\sum_{m\geq1} \frac{1}{d(\tilde{x})^{-2}m^2 + h(\tilde{x})^{-2}n^2} \sin \left(\frac{m\pi x}{d(\tilde{x})}\right) \sin \left(\frac{m\pi x'}{d(\tilde{x})}\right) \\
 & = \frac{4d(\tilde{x})}{h(\tilde{x})}\sum_{m=-\infty}^{\infty} \frac{1}{4\pi^2d(\tilde{x})^{2}h(\tilde{x})^{-2}n^2 + 4\pi^2m^2} \left(e^{im \pi (x+x')/d(\tilde{x})} - e^{im\pi (x-x')/d(\tilde{x})}\right) ,
  \end{align*}
so that $G^{\tilde{x}}(x,y;x',y')$ is of the desired form.  In particular, by definition $f_n(x;x')$ is the Green's function for $\left(\pa_x^2 - \tfrac{n^2 \pi^2}{h(\tilde{x})^2}\right)$ on $[0,d(\tilde{x})]$, and satisfies $\pa_{x}^2f_n(x;x') = \tfrac{n^2\pi^2}{h(\tilde{x})^2}f_n(x;x')$. Applying Lemma \ref{lem:Poisson} with $\xi = \tfrac{x\pm x'}{2 d(\tilde{x})}$ and $a = 2\pi \tfrac{d(\tilde{x})}{h(\tilde{x})}n$ therefore implies that
  \begin{align*}
  f_n(x;x') = \frac{1}{\pi n} \sum_{m=-\infty}^{\infty}\left(\exp\left\{-2\pi \frac{d(\tilde{x})}{h(\tilde{x})}n \left|\frac{x+x'}{2 d(\tilde{x})}+m\right| \right\} - \exp\left\{-2\pi \frac{d(\tilde{x})}{h(\tilde{x})}n \left|\frac{x-x'}{2 d(\tilde{x})}+m\right| \right\}  \right),
  \end{align*}
  as given in the statement of the proposition. 
\end{proof}
\begin{proof}{Corollary \ref{cor:Green1}}
For $x$, $x'$, with $|x-x'|\geq1$, $|x'-\tilde{x}|\leq1$, we can bound the sum over $m\geq1$ in the expression for $f_n(x;x')$ from Proposition \ref{prop:Green1} by
\begin{align*}
|f_n(x;x')| \leq Cn^{-1}e^{-cn|x-x'|}.
\end{align*}
Since $|g_n(x,y)|\leq 1$, we therefore have
\begin{align*}
\left|G^{\tilde{x}}(x,y;x',y')\right| \leq C\sum_{n\geq1}n^{-1}e^{-cn|x-x'|} \leq Ce^{-c|x-x'|}
\end{align*}
for $|x-x'|\geq 1$,  where $C$ is an absolute constant, changing from line-to-line.
\end{proof}
We now study the behaviour of $G^{\tilde{x}}(x,y;x',y')$ near the diagonal $(x,y) = (x',y')$. 
\begin{prop} \label{prop:Green2}
Let $1<p<\infty$ be given. Then, there exist constants $c$, $C$, depending only on $p$ such that the following statements hold:  For any $x \neq x'$, with $|x'-\tilde{x}| \leq 1$, we have the bound
\begin{align*}
\left|\tilde{G}_{h(\tilde{x}),d(\tilde{x})}(x,y;x',y')\right| \leq C\log\left(|x-x'|^{-1}\right). 
\end{align*} 
Defining the operators $T^{(0)}$, $T^{(1)}$ and $T^{(2)}$ by
 \begin{align*}
 T^{(0)}f(x',y')  & \coloneqq \int_{\Omega(\tilde{x})} \tilde{G}_{h(\tilde{x}),d(\tilde{x})}(x,y;x',y')f(x,y) \ud x \ud y, \\
  T^{(1)}f(x',y') & \coloneqq\int_{\Omega(\tilde{x})} \nabla_{x,y}\tilde{G}_{h(\tilde{x}),d(\tilde{x})}(x,y;x',y')f(x,y) \ud x \ud y, \\
  T^{(2)}f(x',y') & \coloneqq\int_{\Omega(\tilde{x})} \nabla^2_{x,y}\tilde{G}_{h(\tilde{x}),d(\tilde{x})}(x,y;x',y')f(x,y) \ud x \ud y.
 \end{align*}
 Then,
 \begin{align*}
 \left|T^{(0)}f(x',y')\right| \leq C\sup_{(x,y)\in \Omega(\tilde{x})}e^{-c|x-x'|}|f(x,y)|,
 \end{align*}
 and letting $U$ be any subset of $\Omega(x^*)$ with diameter comparable to $1$, and containing $(\tilde{x},y)$ for some $y\in[f_1(\tilde{x}),f_2(\tilde{x})]$, we have
 \begin{align*}
 \|T^{(1)}f\|_{L^p(U)} & \leq C \|e^{-c|x-\tilde{x}|}f(x,y) \|_{L^p(\Omega(\tilde{x}))} \\
 \|T^{(2)}f\|_{L^p(U)} & \leq C \|e^{-c|x-\tilde{x}|}f(x,y) \|_{L^p(\Omega(\tilde{x}))} .
 \end{align*}
\end{prop}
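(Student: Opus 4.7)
The plan is to split the analysis into the near-diagonal and far-from-diagonal regimes: near the diagonal, $\tilde{G}_{c,d}$ inherits the logarithmic singularity of the 2D Newtonian potential, while by Corollary \ref{cor:Green1} it decays exponentially once $|x-x'|\geq 1$. The first task is to make the near-diagonal structure explicit. Applying Poisson summation in the $y$-variable to the series defining $\tilde{G}_{c,d}$ (in the same spirit as the $x$-variable calculation in Lemma \ref{lem:Poisson} and Proposition \ref{prop:Green1}) produces an absolutely convergent image representation
\begin{align*}
\tilde{G}_{c,d}(X;X')=\tfrac{1}{2\pi}\log|X-X'|+H(X,X'),
\end{align*}
where $H$ is a sum over reflected copies of $X'$ across the sides of $R_{c,d}$, each term smooth and uniformly bounded once $X$ and $X'$ stay in a compact subset of the interior. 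Since $|X-X'|\geq|x-x'|$, this immediately yields the logarithmic bound for $|x-x'|$ small, and for $|x-x'|\geq 1$ the desired estimate is swallowed by Corollary \ref{cor:Green1}.

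For the $T^{(0)}$ bound, split the integral into the pieces $A=\{|x-x'|\leq 1\}$ and $B=\{|x-x'|>1\}$. On $B$, Corollary \ref{cor:Green1} gives $|\tilde{G}|\leq Ce^{-c|x-x'|}$, whose integral bounds the contribution by $C\sup_{X}e^{-c|x-x'|}|f(X)|$. On $A$, combine the logarithmic bound with $\int_{|X-X'|\leq 1}\log|x-x'|^{-1}\,dX<\infty$, and note that $e^{-c|x-x'|}\sim 1$ on $A$ so we can freely insert the weight $e^{-c|x-x'|}$ to match the right-hand side. For $T^{(1)}$, differentiating the representation in Step 1 gives $|\nabla_X\tilde{G}|\leq C|X-X'|^{-1}$ near the diagonal and, by differentiating the series of Proposition \ref{prop:Green1} term-by-term, $|\nabla_X\tilde{G}|\leq Ce^{-c|x-x'|}$ far from it. Since $|X-X'|^{-1}\mathbf{1}_{|X-X'|\leq 1}$ is locally $L^1$ on $\mathbb{R}^2$ (polar coordinates), Young's inequality handles the near-diagonal piece in $L^p$; inserting the weight $e^{-c|x-\tilde{x}|}$, which is comparable to $1$ whenever $|x-x'|\leq 1$ and $x'\in U$, gives the required form of the right-hand side. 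The far piece is then controlled by taking the $L^p(U)$-norm of the exponentially decaying convolution, using that $x'$ ranges over a set of diameter $O(1)$.

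The hard part is the $L^p$ bound on $T^{(2)}$. Here the near-diagonal kernel satisfies only $|\nabla^2_X\tilde{G}|\lesssim|X-X'|^{-2}$, which is not locally integrable, so Young's inequality fails. The decomposition from Step 1 is the key: differentiating twice in $X$ yields
\begin{align*}
\nabla^2_X\tilde{G}_{c,d}(X;X')=\nabla^2_X\!\left(\tfrac{1}{2\pi}\log|X-X'|\right)+\nabla^2_X H(X,X'),
\end{align*}
where each component $\partial_i\partial_j\bigl((2\pi)^{-1}\log|X-X'|\bigr)$ (away from the diagonal) is an even kernel, homogeneous of degree $-2$ with vanishing mean on circles centered at $X'$, hence a Calder\'on--Zygmund kernel. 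Standard Calder\'on--Zygmund theory then gives the $L^p\to L^p$ boundedness of convolution with this singular part for $1<p<\infty$, while $\nabla^2_X H$ contributes a uniformly bounded kernel on the relevant compact set. The far-from-diagonal part is exponentially small as in the $T^{(1)}$ argument. Assembling these three contributions and inserting the weight $e^{-c|x-\tilde{x}|}$ as before delivers the claimed bound; invoking the Calder\'on--Zygmund estimate for $1<p<\infty$ is the single nontrivial analytic input.
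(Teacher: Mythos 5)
Your argument is correct, but it takes a genuinely different route from the paper. The paper avoids any explicit kernel decomposition: it observes that $w=T^{(0)}f$ solves the Dirichlet problem $\Delta w=f$ in the rectangle with $w=0$ on the boundary, invokes the global $W^{2,p}$ elliptic estimate to control $\nabla w$ and $\nabla^2 w$ in $L^p$ (with derivatives landing on the \emph{output} variable), and then uses the symmetry of $\tilde G_{c,d}$ plus duality (the adjoint of $T^{(2)}$ is $\nabla^2\circ T^{(0)}$, up to the exponential weight) to transfer these bounds to $T^{(1)}$, $T^{(2)}$, whose derivatives sit on the \emph{integration} variable. You instead decompose the kernel itself as $\tilde G_{c,d}(X;X')=\tfrac{1}{2\pi}\log|X-X'|+H(X,X')$, estimate $T^{(0)}$ and $T^{(1)}$ by Young's inequality using local integrability of $\log$ and $|X-X'|^{-1}$, and handle $T^{(2)}$ by treating $\partial_i\partial_j\log|X-X'|$ as a Calder\'on--Zygmund kernel. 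This makes the singular-integral mechanism explicit (which the paper packages inside the $W^{2,p}$ estimate) at the cost of needing the regularity of the remainder $H$; it is essentially the proof of the elliptic estimate unpacked for this specific Green's function.

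One point where you should be a little more careful: the double Poisson summation ``image representation'' for a rectangle is only conditionally convergent (it is essentially a Weierstrass-type lattice sum), so the claim that it converges absolutely is not quite right as stated. However, your decomposition itself is sound by a cleaner argument that sidesteps this entirely: $H(X,X')=\tilde G_{c,d}(X;X')-\tfrac{1}{2\pi}\log|X-X'|$ is harmonic in $X$ throughout $R_{c,d}$, and for $X'$ at distance $\geq c^*/2$ from the boundary its boundary values $-\tfrac{1}{2\pi}\log|X-X'|$ on $\partial R_{c,d}$ are uniformly bounded and smooth; the maximum principle and boundary Schauder estimates then give uniform bounds on $H$, $\nabla_X H$, $\nabla^2_X H$ for all $X\in R_{c,d}$, which is exactly what your Calder\'on--Zygmund step needs. (Note that $X'$ ranging over $U$, rather than all of $\Omega(\tilde x)$, is what keeps the images and hence $H$ under control; $X$ itself is permitted to approach $\partial R_{c,d}$.) Also worth recording explicitly: $T^{(2)}$ must be read as a principal value, consistent with the $\lim_{\eps\to 0}$ regularization used later in the paper, and the diagonal entries $\partial_i^2\log|X-X'|$ contribute an extra multiple of the identity beyond their traceless CZ part; neither affects the $L^p$ bound.
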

\begin{proof}{Proposition \ref{prop:Green2}}
Recalling the relationship between $G^{\tilde{x}}(x,y;x',y')$ and $\tilde{G}_{h(\tilde{x}),d(\tilde{x})}(x,y;x',y')$ from Definition \ref{defn:approx-Greens}, we have the same bounds as in Proposition \ref{prop:Green1} for $\tilde{G}_{h(\tilde{x}),d(\tilde{x})}(x,y;x',y')$. In particular, for $x'$ with $|x'-\tilde{x}|\leq 1$, there exists constants $c$, $C$ such that
\begin{align*}
\left|\tilde{G}_{h(\tilde{x}),d(\tilde{x})}(x,y;x',y')\right| \leq C\sum_{n\geq 1}\frac{1}{n}e^{-c n|x-x'|}.
\end{align*}
Given $\beta>0$, we have the bound
\begin{align*}
\int_{1}^{\infty}\frac{1}{t}e^{-\beta t} \ud t  \leq \int_{1}^{\beta^{-1}}\frac{1}{t} \ud t + \beta\int_{\beta^{-1}}e^{-\beta t} \ud t  = \ln\left(\beta^{-1}\right) + e^{-1}.
\end{align*}
Thus, via the integral test with $\beta = c|x-x'|$, for $x\neq x'$ we have the bound
\begin{align*}
\left|\tilde{G}_{h(\tilde{x}),d(\tilde{x})}(x,y;x',y')\right| \leq C\log\left(|x-x'|^{-1}\right). 
\end{align*}
This estimate implies that $\tilde{G}_{h(\tilde{x}),d(\tilde{x})}(x,y;x',y')$ is an integrable kernel, and combining this with the exponential decay estimate from Corollary \ref{cor:Green1}, the bound for the operator $T^{(0)}$ follows immediately.
\\
\\
To prove the estimates for $T^{(1)}$ and $T^{(2)}$ we argue as follows. By the definition of $\tilde{G}_{h(\tilde{x}),d(\tilde{x})}(x,y;x',y')$, we see that $T^{(0)}f(x,y)$ satisfies the equation
\begin{align*}
\Delta T^{(0)}f(x,y) = f(x,y),
\end{align*}
in $[0,d(\tilde{x})]\times [0,h(\tilde{x})]$, and vanishes on the boundary of the rectangle. Therefore, by elliptic regularity,  for any $1<p<\infty$ we can bound the first and second derivatives of $T^{(0)}f(x,y)$ in $L^p$ in terms of the $L^p$-norm of $T^{(0)}f(x,y)$ itself and $f(x,y)$.  Moreover, the kernel $\tilde{G}_{h(\tilde{x}),d(\tilde{x})}(x,y;x',y')$ is symmetric in $(x,y)$ and $(x',y')$, and so by duality the proposition then follows from these estimates.
\end{proof}
\begin{rem} \label{rem:Green2}
Since $h(\tilde{x})/h(x)$, $h(\tilde{x})/h(x')$ are bounded from above and below on $[0,d(\tilde{x})]$, we obtain the same bounds for the operators
  \begin{align*}
   \tilde{T}^{(0)}f(x',y') & \coloneqq\int_{\Omega(\tilde{x})} \tilde{G}_{h(\tilde{x}),d(\tilde{x})}(x,e(x,y);x',e(x',y'))f(x,y) \ud x \ud y, \\
 \tilde{T}^{(1)}f(x',y') & \coloneqq\int_{\Omega(\tilde{x})} \left(\nabla_{x,y}\tilde{G}_{h(\tilde{x}),d(\tilde{x})}\right)(x,e(x,y);x',e(x',y'))f(x,y) \ud x \ud y, \\
  \tilde{T}^{(2)}f(x',y') & \coloneqq\int_{\Omega(\tilde{x})} \left(\nabla^2_{x,y}\tilde{G}_{h(\tilde{x}),d(\tilde{x})}\right)(x,e(x,y);x',e(x',y'))f(x,y) \ud x \ud y.
 \end{align*}
\end{rem}
To obtain an expression for $v-v_1$ we will use the equation
\begin{align} \label{eqn:v1-main}
-\int_{\Omega(\tilde{x})}G^{\tilde{x}}(x,y;x',y') \ud x \ud y  = \int_{\Omega(\tilde{x})}G^{\tilde{x}}(x,y;x',y') \Delta v(x,y)\ud x \ud y .
\end{align}
We first use Propositions \ref{prop:Green1} and \ref{prop:Green2} to study the left hand side of \eqref{eqn:v1-main}.
\begin{lem} \label{lem:v1}
Let $x'$ with $|x'-\tilde{x}|\leq1$ be given. For $G^{\tilde{x}}(x,y;x',y')$ as in \eqref{eqn:G} and $v_1(x,y)$ as in \eqref{eqn:v1} we have
\begin{align*}
-\int_{\Omega(\tilde{x})}G^{\tilde{x}}(x,y;x',y') \ud x \ud y  = v_1(x',y') + \emph{Error},
\end{align*}
for an \emph{acceptable error}. 
\end{lem}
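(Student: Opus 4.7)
The plan is to perform the obvious change of variable $\eta = e(x,y)$ to flatten the integration region in Definition \ref{defn:approx-Greens} to the rectangle $R = [0,d(\tilde{x})]\times[0,h(\tilde{x})]$, thereby turning the left-hand side into an integral against the genuine rectangular Green's function $\tilde{G}_{h(\tilde{x}),d(\tilde{x})}$. Since $y = f_1(x)+\eta h(x)/h(\tilde{x})$ has $dy = (h(x)/h(\tilde{x}))\,d\eta$, the integral becomes
\begin{align*}
I := -\int_{\Omega(\tilde{x})} G^{\tilde{x}}(x,y;x',y')\,dx\,dy = -\int_0^{d(\tilde{x})}\!\!\int_0^{h(\tilde{x})} \tilde{G}_{h(\tilde{x}),d(\tilde{x})}(x,\eta;x',\eta')\,\frac{h(x)}{h(\tilde{x})}\,d\eta\,dx,
\end{align*}
where $\eta' = e(x',y')$. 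I would then split $h(x)/h(\tilde{x}) = 1 + (h(x)-h(\tilde{x}))/h(\tilde{x})$ and treat the two resulting integrals separately.

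The contribution of $(h(x)-h(\tilde{x}))/h(\tilde{x})$ is exactly $T^{(0)}\!\left[(h(x)-h(\tilde{x}))/h(\tilde{x})\right](x',\eta')$ in the notation of Proposition \ref{prop:Green2}. Since $h(\tilde{x}) \geq 1/2$, Proposition \ref{prop:Green2} yields a bound of the form $C\sup_{(x,y)\in\Omega(\tilde{x})} e^{-c|x-x'|}|h(x)-h(\tilde{x})|$. For $|x'-\tilde{x}|\leq 1$ this translates into a supremum over $|x-\tilde{x}|\leq d(\tilde{x})/2$ with the weight $e^{-c|x-\tilde{x}|}$ (adjusting $c$), and for $|x-\tilde{x}|$ larger than $3d(\tilde{x})/4$ the $e^{-c|x-\tilde{x}|}$ factor together with the boundedness of $h$ absorbs the tail into the $C_1e^{-c_1d(\tilde{x})}$ piece. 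This is precisely an acceptable error in the sense of Remark \ref{rem:Approx}.

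The leading contribution is $-\int_R \tilde{G}_{h(\tilde{x}),d(\tilde{x})}(x,\eta;x',\eta')\,d\eta\,dx = w(x',\eta')$, where $w$ is the torsion function of the rectangle $R$. Rescaling by $(\hat{x},\hat{\eta}) = ((x-d(\tilde{x})/2)/h(\tilde{x}),\,\eta/h(\tilde{x}))$ brings $R$ to $[-N/2,N/2]\times[0,1]$ with $N = d(\tilde{x})/h(\tilde{x}) \geq d(\tilde{x})$ and $\hat{w}(\hat{x},\hat{\eta}) = w(\cdots)/h(\tilde{x})^2$ is given by \eqref{eqn:torsion-rect}. For $|x'-\tilde{x}|\leq 1$ we have $|\hat{x}'|\leq 2$, while the constraint $y'\in[f_1(\tilde{x})+c^*,f_2(\tilde{x})-c^*]$ confines $\hat{\eta}'$ to a strip in $(0,1)$ depending only on $c^*$; a direct estimate on the $\cosh$-series in \eqref{eqn:torsion-rect} then bounds the correction by $Ce^{-cN} \leq Ce^{-cd(\tilde{x})}$. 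Consequently $w(x',\eta') = \tfrac{1}{2}\eta'(h(\tilde{x})-\eta') + O(e^{-cd(\tilde{x})})$.

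It remains to identify $\tfrac{1}{2}\eta'(h(\tilde{x})-\eta')$ with $v_1(x',y')$ up to an acceptable error. Directly from the definition of $e$ one has $\eta' = (y'-f_1(x'))h(\tilde{x})/h(x')$ and $h(\tilde{x})-\eta' = (f_2(x')-y')h(\tilde{x})/h(x')$, so
\begin{align*}
\tfrac{1}{2}\eta'(h(\tilde{x})-\eta') = v_1(x',y')\cdot\frac{h(\tilde{x})^2}{h(x')^2} = v_1(x',y') + v_1(x',y')\cdot\frac{h(\tilde{x})^2-h(x')^2}{h(x')^2}.
\end{align*}
Since $v_1$ is bounded and $h(x')\geq 1/2 - o(1)$ for $|x'-\tilde{x}|\leq 1$, the residual term is $O(|h(\tilde{x})-h(x')|)$, which for $|x'-\tilde{x}|\leq 1$ is bounded by a constant multiple of $\sup_{|x-\tilde{x}|\leq 3d(\tilde{x})/4}e^{-c_1|x-\tilde{x}|}|h(x)-h(\tilde{x})|$ and is thus an acceptable error. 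Combining the three ingredients finishes the proof. The main technical nuisance — not a real obstacle — is the uniform control of the $\cosh$-tail in \eqref{eqn:torsion-rect} after the rescaling, which is where the $c^*$-dependence of $c_1,C_1$ enters.
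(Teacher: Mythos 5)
Your proposal is correct, and it arrives at the same decomposition the paper uses (rectangular main term plus an error controlled by $h(x)-h(\tilde{x})$), but packages it more cleanly. The paper works directly with the eigenfunction representation $G^{\tilde{x}} = \sum_n f_n g_n g_n'$ from Proposition \ref{prop:Green1}, approximates $f_n(x;x')$ by $-\tfrac{1}{\pi n}\exp\{-\pi n|x-x'|/h(\tilde{x})\}$, integrates in $y$ to pull out Fourier coefficients, and then adds and subtracts $h(\tilde{x})$ inside the remaining $x$-integral before matching against the Fourier series \eqref{eqn:v1a} of $v_1$. Your approach replaces all of this with a single change of variables $\eta = e(x,y)$ that flattens $\Omega(\tilde{x})$ to the rectangle $R$, after which the Jacobian $h(x)/h(\tilde{x})$ is split as $1 + (h(x)-h(\tilde{x}))/h(\tilde{x})$: the first piece is recognized as the rectangular torsion function and read off from \eqref{eqn:torsion-rect} after a rescaling, and the second piece is an acceptable error directly by the $T^{(0)}$ bound in Proposition \ref{prop:Green2}. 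This is essentially the same computation (the $h(x)$-vs-$h(\tilde{x})$ split in the paper is exactly your Jacobian split, and the Fourier-series matching in the paper is exactly your invocation of \eqref{eqn:torsion-rect}), but your formulation is more transparent in that it delegates the error estimate to the already-established operator bound rather than tracking the subtracted term through the double sum. One minor point: the $c^*$-constraint on $y'$ is not actually needed for your $\cosh$-tail estimate, since $|\sin(n\pi\hat\eta')|\leq 1$ and the $n^{-3}\cosh(n\pi N/2)^{-1}$ factors already give $O(e^{-cN})$ uniformly in $\hat\eta'\in[0,1]$; the $c^*$-dependence enters only in the derivative estimates later in Theorem \ref{thm:Approx1}, not in Lemma \ref{lem:v1}. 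Also, because you integrate over $R$ (i.e.\ $|x-\tilde{x}|\leq \tfrac{1}{2}d(\tilde{x})$), the range of the supremum in your error bound is already inside $|x-\tilde{x}|\leq\tfrac{3}{4}d(\tilde{x})$ so the discussion of the tail beyond $\tfrac{3}{4}d(\tilde{x})$ is unnecessary.
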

\begin{proof}{Lemma \ref{lem:v1}}
For $x'$ fixed, the Fourier series of $v_1(x',y')$ on $[f_1(x'),f_2(x')]$ is given by
\begin{align} \label{eqn:v1a}
\frac{2h(x')^2}{\pi^3}\sum_{n\geq 1}\frac{1-(-1)^n}{n^3} \sin \left(n\pi\frac{y'-f_1(x')}{h(x')}\right) . 
\end{align}
We now use Proposition \ref{prop:Green1} to approximate the integral of $G^{\tilde{x}}(x,y;x',y')$, and show that this Fourier series appears as the main term. Referring to the expression for $f_n(x;x')$ from Lemma \ref{prop:Green1}, for $x'$ satisfying $|x'-\tilde{x}|\leq1$, there exists constants $C_1$, $c_1>0$, such that
\begin{align*}
\left|f_n(x;x') +\frac{1}{\pi n}\exp\left\{-\frac{\pi n|x-x'|}{h(\tilde{x})}\right\}\right| \leq C_1e^{-c_1nd(\tilde{x})}. 
\end{align*}
Therefore, up to terms that can be included in the Error, we are left to consider
\begin{align} \label{eqn:v1b}
\int_{\Omega(\tilde{x})}\sum_{n\geq 1}\frac{1}{\pi n}\exp\left\{-\frac{\pi n|x-x'|}{h(\tilde{x})}\right\} \sin \left(n\pi\frac{y-f_1(x)}{h(x)}\right)  \sin \left(n\pi\frac{y'-f_1(x')}{h(x')}\right) \ud x \ud y.
\end{align}
Since by Proposition \ref{prop:Green2}, $G^{\tilde{x}}(x,y;x',y')$ has an integrable singularity at $(x,y) = (x',y')$, we can swap the order of summation and integration. Computing the integral in $y$, \eqref{eqn:v1b} is equal to
\begin{align} \label{eqn:v1c}
\sum_{n\geq1}\frac{1-(-1)^n}{\pi^2 n^2}\left(\int_{0}^{d(\tilde{x})}h(x)\exp\left\{-\frac{\pi n|x-x'|}{h(\tilde{x})}\right\}\ud x\right)  \sin \left(n\pi\frac{y'-f_1(x')}{h(x')}\right).
\end{align}
Adding and subtracting $h(\tilde{x})\exp\left\{-\frac{\pi n|x-x'|}{h(\tilde{x})}\right\}$ in the integrand, we see that \eqref{eqn:v1c} equals
\begin{equation}\label{eqn:v1d}
\begin{aligned} 
& 2\sum_{n\geq1}\frac{1-(-1)^n}{\pi^3 n^3}h(\tilde{x})^2 \sin \left(n\pi\frac{(y'-f_1(x'))}{h(x')}\right) \\ 
& +\sum_{n\geq1}\frac{1-(-1)^n}{\pi^2 n^2}\left(\int_{0}^{d(\tilde{x})}\left(h(x)-h(\tilde{x})\right)\exp\left\{-\frac{\pi n|x-x'|}{h(\tilde{x})}\right\}\ud x\right)  \sin \left(n\pi\frac{y'-f_1(x')}{h(x')}\right).
\end{aligned}
\end{equation}
up to boundary terms at $x = 0, d(\tilde{x})$, which can be included in the Error. Comparing this with the expression in \eqref{eqn:v1a}, we find that the first sum in \eqref{eqn:v1d} equals $v_1(x',y')$ up to an admissible error. Since $h(\tilde{x})\geq\tfrac{1}{2}$, the second sum in \eqref{eqn:v1d} can be immediately included in the error, and this completes the proof of the lemma.
\end{proof}
We now return to the right hand side of \eqref{eqn:v1-main}. We will integrate by parts to move the derivatives away from $v(x,y)$ and then combine with Lemma \ref{lem:v1} to get our expression for $v-v_1$. Given $\eps>0$, set $\Omega_{\eps}(\tilde{x}) = \Omega(\tilde{x})\cap\{(x,y):|x-x'| >\eps\}$. Since $G^{\tilde{x}}(x,y;x',y')$ has an integrable singularity, we can rewrite \eqref{eqn:v1-main} as
 \begin{align} \label{eqn:Gv1}
 -\int_{\Omega(\tilde{x})}G^{\tilde{x}}(x,y;x',y') \ud x \ud y = \lim_{\eps\to0}\int_{\Omega_{\eps}(\tilde{x})} G^{\tilde{x}}(x,y;x',y')\Delta v(x,y) \ud x \ud y,
 \end{align}
 with the left hand side as in Lemma \ref{lem:v1}. On the right hand side of \eqref{eqn:Gv1}, we integrate by parts to move the derivatives from $v(x,y)$ onto the kernel $G^{\tilde{x}}(x,y;\tilde{x},y')$. Since we have control in $L^{\infty}$ on only $h'(x)$ (and not $h''(x)$), we will do this in a way that ensures that at most one derivative is applied to $h(x)$. By Proposition \ref{prop:Green1}, the infinite sum in $G^{\tilde{x}}(x,y;\tilde{x},y')$ converges uniformly for $\eps>0$ fixed. Therefore, for the part  of \eqref{eqn:Gv1}  containing a factor of $\pa_{x}^2v(x,y)$, we integrate by parts one time in $x$ to obtain
  \begin{align}  \label{eqn:Gv2}
 & -  \lim_{\eps\to0}\int_{\Omega_{\eps}(\tilde{x})}\sum_{n \geq 1}f_n(x;x') \pa_{x} g_n(x,y)g_n(x',y') \pa_{x} v(x,y) \ud x \ud y  \\ \label{eqn:Gv3}
& -  \lim_{\eps\to0}\int_{\Omega_{\eps}(\tilde{x})}\sum_{n \geq 1}\pa_{x}f_n(x;x')  g_n(x,y)g_n(x',y') \pa_{x} v(x,y) \ud x \ud y.
 \end{align}
Since $G^{\tilde{x}}(x,y;x',y') = 0$ for $(x,y)\in\pa \Omega(\tilde{x})$, we do not get any boundary terms on $\pa \Omega(\tilde{x})$. Also, by the pointwise bounds on $G^{\tilde{x}}(x,y;x',y')$ from Proposition \ref{prop:Green2}, the boundary terms on  $|x-x'| =\eps$ vanish  as $\eps$ tends to $0$. We now integrate by parts again in \eqref{eqn:Gv3} to get the integrals
 \begin{align} \label{eqn:Gv4}
 &  \lim_{\eps\to0}\int_{\Omega_{\eps}(\tilde{x})}\sum_{n \geq 1}\pa^2_{x}f_n(x;x')  g_n(x,y)g_n(x',y')  v(x,y) \ud x \ud y \\
  & \label{eqn:Gv5}
  + \lim_{\eps\to0}\int_{\Omega_{\eps}(\tilde{x})}\sum_{n \geq 1}\pa_{x}f_n(x;x') \pa_{x} g_n(x,y)g_n(x',y')  v(x,y) \ud x \ud y
 \end{align}
 together with the following boundary terms: Since $v(x,y) = 0$ on $\pa\Omega$, the only boundary terms from $\pa\Omega(\tilde{x})$, come from those $(x,y)\in\Omega$ with $|x-\tilde{x}| = \tfrac{1}{2}d(\tilde{x})$. Thus, by Corollary \ref{cor:Green1} these terms consist of acceptable error terms. We also get the boundary term on $|x-x'|=\eps$ equal to
 \begin{align*}
  \lim_{\eps\to0}\int_{(x,y)\in \Omega_{\eps}(\tilde{x}), |x-x'|=\eps}\sum_{n \geq 1}\pa_{x}f_n(x;x')  g_n(x,y)g_n(x',y') v(x,y)  \ud y.
 \end{align*}
 Since $f_n(x;x')$ is the Green's function for $\left(\pa_x^2-\frac{n^2\pi^2}{h(\tilde{x})^2}\right)$ on  $\left[\tilde{x} - \tfrac{1}{2}d(\tilde{x}),\tilde{x}+\tfrac{1}{2}d(\tilde{x})\right]$, this integral is equal to
 \begin{align*}
 \int_{f_1(x')}^{f_2(x')}\sum_{n \geq 1}  g_n(x',y)g_n(x',y') v(x',y)  \ud y = v(x',y') .
 \end{align*}
 To obtain an expression for $v(x',y')$, we will now study the remaining part of the right hand side of \eqref{eqn:Gv1} coming from
 \begin{align*}
 \lim_{\eps\to0}\int_{\Omega_{\eps}(\tilde{x})} G^{\tilde{x}}(x,y;x',y')\pa_y^2 v(x,y) \ud x \ud y.
 \end{align*}
 Integrating by parts twice in $y$, and using $v(x,y) = G^{\tilde{x}}(x,y;x',y') = 0$ for $y=f_1(x)$, $y=f_2(x)$, this integral becomes
 \begin{align} \label{eqn:Gv6}
 -\lim_{\eps\to0}\int_{\Omega_{\eps}(\tilde{x})}\sum_{n \geq 1}\frac{n^2}{h(x)^2}f_n(x;x') g_n(x,y)g_n(x',y')  v(x,y) \ud x \ud y.
 \end{align}
 Since by Proposition \ref{prop:Green1} we have $\pa_x^2f_n(x;x') = \tfrac{n^2\pi^2}{h(\tilde{x})^2}f_n(x;x')$ for $x\neq x'$, we see that \eqref{eqn:Gv4} $+$ \eqref{eqn:Gv6} equals
 \begin{align} \label{eqn:Gv7}
  -\lim_{\eps\to0}\int_{\Omega_{\eps}(\tilde{x})}\sum_{n \geq 1}\left(\frac{1}{h(\tilde{x})^2}- \frac{1}{h(x)^2}\right)\pi^2n^2f_n(x;x') g_n(x,y)g_n(x',y')  v(x,y) \ud x \ud y.
 \end{align}
 Bringing everything together, we have established the following lemma.
 \begin{lem} \label{lem:v-expression}
Let $(x',y')\in \Omega(\tilde{x})$ with $|x'-\tilde{x}|\leq 1$. Then, we have the expression
 \begin{align*}
 v_1(x',y') = v(x',y') + \eqref{eqn:Gv2} +  \eqref{eqn:Gv5} +  \eqref{eqn:Gv7} + \emph{ Error},
 \end{align*}
 where the \emph{Error} is an \emph{acceptable error}. 
 \end{lem}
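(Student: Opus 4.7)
The essential outline is already laid out in the paragraphs preceding the lemma; the plan is to carefully assemble that derivation into a clean statement. The starting point is the identity \eqref{eqn:v1-main}, which is exact because $\Delta v \equiv -1$. The left hand side is handled by the previous Lemma \ref{lem:v1}, producing $v_1(x',y') + \text{Error}$. All the remaining work concerns the right hand side.

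For the right hand side, I would first regularize by cutting out a neighbourhood $|x-x'|<\eps$ of the diagonal, as in \eqref{eqn:Gv1}. Proposition \ref{prop:Green1} gives uniform convergence of the Fourier series for $G^{\tilde x}$ on the regularized domain, which justifies termwise integration by parts. Splitting $\Delta = \pa_x^2 + \pa_y^2$, I would treat the two terms separately. For the $\pa_x^2 v$ piece, I would integrate by parts once in $x$; since the factor $g_n(x,y)$ depends on both $x$ and $y$, the $x$-derivative falls either on $f_n$ or on $g_n$, giving precisely the two integrals \eqref{eqn:Gv2} and \eqref{eqn:Gv3}. The boundary contributions on $\partial\Omega(\tilde x)$ inside $\Omega$ vanish because $G^{\tilde x}$ is constructed to vanish on $y=f_1(x),\ y=f_2(x)$, and the boundary at $|x-x'|=\eps$ vanishes in the limit by the logarithmic bound of Proposition \ref{prop:Green2}. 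I would then integrate by parts once more in \eqref{eqn:Gv3}, producing \eqref{eqn:Gv4}, \eqref{eqn:Gv5}, a boundary term on the two vertical sides $|x-\tilde x|=\tfrac12 d(\tilde x)$, and a nonvanishing boundary term at $|x-x'|=\eps$. The vertical-side boundary terms are acceptable errors by Corollary \ref{cor:Green1}. The key step is to recognize that the boundary term at $|x-x'|=\eps$ collapses, in the limit $\eps\to 0$, to $\sum_n g_n(x',y)g_n(x',y')$ integrated in $y$, because $f_n(x;x')$ is the Green's function for $\pa_x^2 - n^2\pi^2/h(\tilde x)^2$ on $[0,d(\tilde x)]$, so its $x$-derivative has a jump of $1$ across $x=x'$. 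The sine series completeness on $[f_1(x'),f_2(x')]$ then yields exactly $v(x',y')$.

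For the $\pa_y^2 v$ piece, I would integrate by parts twice in $y$. Both boundary terms vanish because $v$ and $G^{\tilde x}$ each vanish on $y=f_1(x)$ and $y=f_2(x)$, and the $y$-derivative of $g_n(x,y)$ pulls out a factor of $n\pi/h(x)$, giving \eqref{eqn:Gv6} with the factor $n^2/h(x)^2$. Combining \eqref{eqn:Gv4} with \eqref{eqn:Gv6} and using the identity $\pa_x^2 f_n = (n^2\pi^2/h(\tilde x)^2) f_n$ from Proposition \ref{prop:Green1}, the two terms merge into \eqref{eqn:Gv7}, whose integrand carries the difference $h(\tilde x)^{-2} - h(x)^{-2}$. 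Collecting everything yields
\[
-\int_{\Omega(\tilde x)} G^{\tilde x}\,\ud x\,\ud y \;=\; v(x',y') + \eqref{eqn:Gv2} + \eqref{eqn:Gv5} + \eqref{eqn:Gv7} + \text{Error},
\]
and combining with Lemma \ref{lem:v1} on the left hand side gives the claimed expression.

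The main obstacle is the careful bookkeeping of the two limits (the $\eps\to 0$ limit and the infinite sum over $n$), together with the extraction of the boundary term that reproduces $v(x',y')$; the latter requires noting both the jump relation for the one-dimensional Green's function $f_n(x;x')$ and the completeness of $\{g_n(x',y)\}$ as a sine basis on $[f_1(x'),f_2(x')]$. The vertical-side boundary contributions and the lateral cutoff tails are routine once Corollary \ref{cor:Green1} and Remark \ref{rem:Approx} are in hand, since $v$ itself is bounded by $\tfrac18$ (Lemma \ref{lem:max-value1}).
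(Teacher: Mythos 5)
Your proposal is correct and follows the paper's own derivation essentially step by step: the same regularization by $\Omega_\eps(\tilde x)$, the same sequence of integrations by parts (one in $x$ for $\pa_x^2 v$, then a second on \eqref{eqn:Gv3}; two in $y$ for $\pa_y^2 v$), the same merging of \eqref{eqn:Gv4} and \eqref{eqn:Gv6} into \eqref{eqn:Gv7} via $\pa_x^2 f_n = (n^2\pi^2/h(\tilde x)^2)f_n$, and the same identification of the $|x-x'|=\eps$ boundary term with $v(x',y')$ using the jump of $\pa_x f_n$ together with sine-basis completeness. Your plan matches the paper's argument; no genuine deviation.
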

We will use the expression for $v(x',y') - v_1(x',y')$ from Lemma \ref{lem:v-expression} to obtain the desired bound on $v(\tilde{x},y) - v_1(\tilde{x},y)$ from Theorem \ref{thm:Approx}. We first use Proposition \ref{prop:Green2} (and Remark \ref{rem:Green2}) to show that, given $p$ with $1<p<\infty$, there exists an absolute constant $C_p$ such that
\begin{align} \label{eqn:v-expression1}
\norm{v - v_1}_{L^{p}(U)} \leq C_pe^{-c_1d(x^*)} + C_p \sup_{|x-x^*|\leq \tfrac{3}{4}d(x^*)}e^{-c_1|x-x^*|}\left|h(x)-h(x^*)\right|,
\end{align}
where $U$ is the rectangle $[\tilde{x}-1,\tilde{x}+1] \times[f_1(\tilde{x}) +c^*/2,f_2(\tilde{x}) - c^*/2]$. To do this, we will need a first estimate on $\nabla v(x,y)$:
\begin{lem} \label{lem:deriv}
There exists an absolute constant $C$ such that
\begin{align*}
\left|\nabla v(x,y)\right| & \leq C, \quad\emph{for all } (x,y)\in\Omega \emph{ with } |x-\tilde{x}| \leq \tfrac{1}{2}d(\tilde{x}), \\
\left|\pa_{x}v(x,y)\right| & \leq C|h'(x)|, \quad\emph{for all } (x,y)\in\pa\Omega \emph{ with } |x-\tilde{x}| \leq \tfrac{1}{2}d(\tilde{x}).
\end{align*}
\end{lem}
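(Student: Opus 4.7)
The plan is to prove the two estimates in sequence: the first via an explicit super-solution barrier at each boundary point combined with interior gradient estimates, and the second as a short algebraic consequence of the first together with the normalization.

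For the first bound $|\nabla v|\le C$, at each top-boundary point $P=(x_0,f_2(x_0))$ I would use the super-solution
\[
\psi_P(x,y)=\tfrac{1}{2}\,y\bigl(f_2(x_0)+f_2'(x_0)(x-x_0)-y\bigr),
\]
which is direct to check to satisfy $\Delta\psi_P=-1$ and, by concavity of $f_2$ together with $0\le f_1\le f_2$, to be non-negative on all of $\partial\Omega$ and to vanish at $P$. Harmonicity of $v-\psi_P$ and the maximum principle then give $v\le\psi_P$ throughout $\Omega$, so along the vertical ray $v(x_0,f_2(x_0)-t)\le\tfrac{1}{2}f_2(x_0)\,t$, whence $|\partial_y v(P)|\le\tfrac{1}{2}$. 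An analogous super-solution handles the bottom boundary. This pointwise bound on $v$ near $\partial\Omega$ is then combined with standard interior gradient estimates at the local scale $\delta=\operatorname{dist}(\,\cdot\,,\partial\Omega)$ to give $|\nabla v|\le C$ throughout $\Omega\cap\{|x-\tilde x|\le d(\tilde x)/2\}$. The relevant regularity of $\partial\Omega$ in this region is automatic: concavity of $h$ combined with $h(\tilde x)\ge\tfrac{1}{2}$ and $h(a)=h(b)=0$ forces $h\ge\tfrac{1}{4}$ on the region, and $|f_i'|$ is bounded by the secant slope from $f_i\in[0,1]$ between points of distance $\ge d(\tilde x)/2\ge\tfrac{1}{2}$ from the tips.

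For the second bound, differentiating $v(x,f_i(x))\equiv 0$ gives the tangential identity $\partial_x v=-f_i'(x)\partial_y v$ on $y=f_i(x)$, and combined with the first bound this already yields $|\partial_x v|\le C|f_i'(x)|$. To upgrade the factor $|f_i'|$ to $|h'|$, I would use the paper's normalization: rotating to minimise the $y$-projection together with $\max h=1$ forces a point $\bar x$ with $f_1(\bar x)=0$ and $f_2(\bar x)=1$ simultaneously, so convexity of $f_1$ makes $\bar x$ its global minimum and concavity of $f_2$ makes $\bar x$ its global maximum. For every $x$ the derivatives $f_1'(x)$ and $f_2'(x)$ therefore have opposite signs (or both vanish), so
\[
|h'(x)|=|f_2'(x)-f_1'(x)|=|f_1'(x)|+|f_2'(x)|\ge|f_i'(x)|,
\]
and $|\partial_x v|\le C|h'(x)|$ as required.

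The main obstacle is verifying globally that the barrier $\psi_P$ is non-negative on all of $\partial\Omega$ (not only near $P$): this uses the global concavity of $f_2$ together with the normalization $f_1\ge 0$, and is what allows the same pointwise bound $|\partial_y v|\le\tfrac{1}{2}$ to hold at every boundary point with an absolute constant. The algebraic step for the second bound is short but equally dependent on the normalization; without the rotation to minimize the $y$-projection, a sheared strip would exhibit $|h'|\equiv 0$ while $|f_i'|>0$, and the second estimate would fail.
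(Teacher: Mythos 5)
Your argument is correct and follows essentially the same route as the paper: the paper's phrase ``convexity of $\Omega$ ensures that $v$ decays linearly to the boundary, and the first bound thus follows by elliptic estimates'' is exactly what your explicit super-solution barrier $\psi_P$ combined with interior gradient estimates unpacks, and for the second estimate both arguments differentiate $v(x,f_i(x))\equiv 0$ along the boundary and then invoke $|f_i'(x)|\le|h'(x)|$. The only difference is that you supply a proof of the inequality $|f_i'|\le|h'|$ (via the sign pattern of $f_1'$ and $f_2'$ around the common extremum $\bar x$ forced by the normalization), whereas the paper states it without comment; this is a welcome elaboration, not a different method.
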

\begin{proof}{Lemma \ref{lem:deriv}}
Since $h(\tilde{x})\geq\tfrac{1}{2}$, by convexity we have $h(x)\geq\tfrac{1}{4}$ for $|x-\tilde{x}|\leq \tfrac{1}{2}d(\tilde{x})$. The convexity of $\Omega$ ensures that $v(x,y)$ decays linearly to the boundary, and the first bound in the statement of the lemma thus follows by elliptic estimates. Since $v(x,y) = 0$ on $\pa\Omega$, we have
\begin{align*}
v(x,f_1(x)) = 0, \quad v(x,f_2(x)) = 0.
\end{align*}
Differentiating these equations with respect to $x$ implies that
\begin{align*}
\pa_xv(x,f_1(x)) + f_1'(x)\pa_yv(x,f_1(x)) = 0, \quad \pa_xv(x,f_2(x)) + f_2'(x)\pa_yv(x,f_2(x)) = 0.
\end{align*}
Combining $|\pa_yv(x,f_i(x))| \leq C$ with $|f_i'(x)|\leq |h'(x)|$ establishes $\left|\pa_{x}v(x,y)\right| \leq C|h'(x)|$ on $\pa\Omega$. 
\end{proof}
We can now use Lemma \ref{lem:v-expression} to establish \eqref{eqn:v-expression1}: Let us first consider the integral in \eqref{eqn:Gv2}. Since $g_n(x,y) = \sin\left(n\pi \frac{y-f_1(x)}{h(x)}\right)$, we have
\begin{align*}
\pa_xg_n(x,y) = -n\pi\left(\frac{ f_1'(x)}{h(x)} + \frac{h'(x)(y-f_1(x))}{h(x)^2}\right) \cos \left(n\pi \frac{y-f_1(x)}{h(x)}\right), \quad \pa_yg_n(x,y) = \frac{n\pi}{h(x)}\cos\left(n\pi \frac{y-f_1(x)}{h(x)}\right).
\end{align*}
Therefore, we can write this integral as the $y$-component of $\tilde{T}^{(1)}f(x',y')$, where $\tilde{T}^{(1)}$ is as in Remark \ref{rem:Green2}, and $f(x,y)$ satisfies
\begin{align*}
\left|f(x,y)\right| \leq C |h'(x)||\pa_xv(x,y)|,
\end{align*}
for an absolute constant $C$. Since $|\pa_xv(x,y)|$ is bounded, the $L^{p}$ bounds on \eqref{eqn:Gv2} required for \eqref{eqn:v-expression1} follow from Proposition \ref{prop:Green2} and Remark \ref{rem:Approx}. The estimates on \eqref{eqn:Gv5} and \eqref{eqn:Gv7} follow analogously, this time using the bounds on $\tilde{T}^{(2)}f(x',y')$. Therefore, \eqref{eqn:v-expression1} holds. To go from $L^{p}$-estimates on $v-v_1$ to pointwise estimates, we need the following:
\begin{lem} \label{lem:Laplacian}
Let $\chi(x)\geq0$ be a smooth cut-off function, equal to $1$ for $|x-\tilde{x}| \leq 1$, and equal to $0$ for $|x-\tilde{x}|\geq 2$. Then, setting $w(x,y) = v(x,y)-v_1(x,y)$, we have
\begin{align*}
\Delta\left(\chi(x)w(x,y)\right) = \chi''(x)w(x,y) + 2\chi'(x)\pa_xw(x,y) + \chi(x)\sigma(x,y).
\end{align*}
Here $\sigma(x,y)$ is a function satisfying the bounds
\begin{align*}
\int_{|x-\tilde{x}|\leq2}\left|\sigma(x,y)\right| \ud x \leq C\sup_{|x-\tilde{x}|\leq 2} |h'(x)|
\end{align*}
for an absolute constant $C$.
\end{lem}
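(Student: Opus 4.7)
The plan is to apply the product rule to $\Delta(\chi w)$, identify $\sigma$ as $\Delta w$, and then bound the $L^1$-in-$x$ norm of $\sigma$ using the convexity of $f_1$ and concavity of $f_2$ together with the normalization $\max h = 1$. The product rule gives immediately
$$\Delta(\chi(x) w(x,y)) = \chi''(x) w(x,y) + 2\chi'(x)\pa_x w(x,y) + \chi(x) \Delta w(x,y),$$
so the natural choice is $\sigma(x,y) := \Delta w(x,y) = -1 - \Delta v_1(x,y)$, using $\Delta v = -1$.

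The first computation is to find $\Delta v_1$ explicitly from $v_1 = \tfrac{1}{2}(y-f_1(x))(f_2(x)-y)$. Direct differentiation yields $\pa_y^2 v_1 = -1$ and
$$\pa_x^2 v_1(x,y) = -\tfrac{1}{2}(f_2(x) - y) f_1''(x) + \tfrac{1}{2}(y - f_1(x)) f_2''(x) - f_1'(x) f_2'(x),$$
so
$$\sigma(x,y) = \tfrac{1}{2}(f_2(x) - y) f_1''(x) - \tfrac{1}{2}(y - f_1(x)) f_2''(x) + f_1'(x) f_2'(x).$$

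The second-derivative terms are controlled using that $f_1$ is convex and $f_2$ is concave, so $f_1'' \geq 0$ and $-f_2'' \geq 0$ as non-negative measures, and $(f_2-y), (y-f_1) \in [0,h(x)] \subset [0,1]$ in $\Omega$. Hence their contribution to $\int_{|x-\tilde x| \leq 2}|\sigma(x,y)|\ud x$ is bounded by $\tfrac{1}{2}\int (f_1'' - f_2'')\ud x$, which telescopes to $\tfrac{1}{2}[h'(\tilde x - 2) - h'(\tilde x + 2)] \leq \sup|h'|$. For the cross term, since $\max h = 1$ is attained at some $x^*$ and $0 \leq f_1 \leq f_2 \leq 1$, necessarily $f_1(x^*) = 0$ and $f_2(x^*) = 1$; thus $x^*$ is the global minimum of the convex $f_1$ and the global maximum of the concave $f_2$. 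Combined with the monotonicity of $f_1'$ and $f_2'$, this forces $f_1'(x)$ and $f_2'(x)$ to have opposite signs at every $x$, so $|f_1'| + |f_2'| = |h'|$ and $|f_1'(x) f_2'(x)| \leq \tfrac{1}{4}|h'(x)|^2 \leq \tfrac{1}{4}\sup|h'|\cdot |h'(x)|$. Since $h$ is concave with $0 \leq h \leq 1$, the total variation $\int_{|x-\tilde x|\leq 2}|h'(x)|\ud x$ is bounded by an absolute constant, so the cross term contributes at most $C\sup|h'|$ as well.

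The main obstacle is the sign analysis that yields $|f_1'f_2'| \leq \tfrac{1}{4}|h'|^2$: at first glance $\sigma$ contains a product of \emph{first} derivatives that could spoil the desired bound, and it is only through the global geometric fact that the extrema of $f_1$ and $f_2$ are achieved at the same point $x^*$ that this term is tamed. A secondary technical point is that general convex/concave $f_1, f_2$ need not be $C^2$; $f_1''$ and $f_2''$ should be interpreted as signed Radon measures and $\sigma$ as a measure, with the calculation justified by smooth approximation, since only the total masses enter the estimate. We also implicitly use that $[\tilde x - 2, \tilde x + 2] \subset [a,b]$, which is ensured by taking $d(\tilde x)$ at least an absolute constant, as in Remark \ref{rem:Approx}.
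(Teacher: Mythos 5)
Your proposal is correct and follows essentially the same approach as the paper: compute $\Delta(\chi w)$ by the product rule, identify $\sigma = -1 - \Delta v_1$, and bound its $L^1_x$-norm using the convexity of $f_1$, concavity of $f_2$, and the inequality $|f_i'| \le |h'|$ (which the paper asserts and you derive from the sign analysis at the point where $h$ attains its maximum). Your version supplies more of the details that the paper leaves implicit, including the measure-theoretic interpretation of $f_i''$, but the underlying argument is the same.
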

\begin{proof}{Lemma \ref{lem:Laplacian}}
Using $\Delta v(x,y) = -1$, we have
\begin{align*}
\Delta\left(\chi(x)w(x,y)\right) = \chi''(x)w(x,y) + 2\chi'(x)\pa_xw(x,y) + \chi(x)\sigma(x,y),
\end{align*}
with 
\begin{align*}
\sigma(x,y) = -1 -\Delta v_1(x,y) = -\frac{1}{2}y\left(f_1''(x) + f_2''(x)\right) +\frac{1}{2}\pa_{x}^2\left(f_1(x)f_2(x)\right).
\end{align*}
Since $f_1$ is convex, $f_2$ is concave, and $|f_i'(x)|\leq |h'(x)|$, the desired estimates on $\sigma(x,y)$ follow. 
\end{proof}
We do not have a pointwise bound on $\sigma(x,y)$ from Lemma \ref{lem:Laplacian}, and to overcome this we will use the following bounds on the Green's function for a subdomain of $\Omega$ near $\tilde{x}$ (see \cite{GJ1}, Lemma 6):
\begin{prop} \label{prop:Green3}
Let $G_0(x,y;x',y')$ be the Green's function for the domain
\begin{align*}
\Omega^{(0)}(\tilde{x}) = \{(x,y)\in\Omega:|x-\tilde{x}|\leq 2\}.
\end{align*}
Then, there exists an absolute constant $C_0$ such that
\begin{align*}
|G_0(x,y;x',y')| + |\pa_{x}G_0(x,y;x',y')| & \leq C_0, \quad \text{for } |x-x'|\geq\tfrac{1}{2} \\
\norm{G_0(\cdot,\cdot;x',y')}_{L^{\infty}_{x}(L^{1}_{y})} & \leq C_0.
\end{align*}
\end{prop}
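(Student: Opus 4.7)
The plan is to decompose the Green's function into its fundamental solution and a regular harmonic correction. Writing
\[
G_0(x,y;x',y') = -\frac{1}{2\pi}\log\left|(x,y)-(x',y')\right| + H(x,y;x',y'),
\]
the correction $H(\cdot,\cdot;x',y')$ is harmonic on $\Om^{(0)}(\tilde{x})$ with boundary data $H = \tfrac{1}{2\pi}\log|\cdot-(x',y')|$ on $\pa\Om^{(0)}(\tilde{x})$. Since $\Om^{(0)}(\tilde{x})$ is the intersection of the convex domain $\Om$ with a vertical strip of width $4$, it is itself convex, has diameter bounded by an absolute constant, and (using $h(\tilde{x})\geq\tfrac{1}{2}$) contains an interior ball of radius comparable to $1$ around $(\tilde{x},\tfrac{1}{2})$. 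The maximum principle yields $0\leq G_0 \leq -\tfrac{1}{2\pi}\log|(x,y)-(x',y')|+C$ throughout, with $C$ absolute, since $H$ is harmonic and bounded above by $\tfrac{1}{2\pi}\log(\mathrm{diam})$ on $\pa\Om^{(0)}(\tilde{x})$.

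The pointwise bound on $|G_0|$ for $|x-x'|\geq\tfrac{1}{2}$ is then immediate, because the logarithmic envelope is bounded above (via the diameter) and below (via $|x-x'|\geq\tfrac{1}{2}$). For $|\pa_x G_0|$ I would split cases. In the subregion $\Om^{(0)}(\tilde{x})\cap\{|x-x'|>\tfrac{1}{4}\}$, the function $G_0(\cdot,\cdot;x',y')$ is harmonic, so at points $(x,y)$ at absolute distance from $\pa\Om^{(0)}(\tilde{x})$, interior gradient estimates for harmonic functions give $|\nabla G_0|\leq C\|G_0\|_{L^{\infty}}$, already bounded. Near the boundary, the convexity of $\Om^{(0)}(\tilde{x})$ supplies linear barriers: at a boundary point $p$, a supporting line determines a linear function $\ell\geq 0$ on $\Om^{(0)}(\tilde{x})$ vanishing at $p$, and comparing $G_0$ with $C\ell$ via the maximum principle on a small neighbourhood of $p$ (using the already established $L^\infty$ bound on $\pa B$) gives $G_0\leq C\,\mathrm{dist}(\cdot,\pa\Om^{(0)}(\tilde{x}))$, hence a boundary gradient bound.

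For the $L^{\infty}_{x}(L^{1}_{y})$ estimate I would use the same upper envelope:
\[
\int_{f_1(x)}^{f_2(x)}\left|G_0(x,y;x',y')\right|\ud y \leq C\int_{f_1(x)}^{f_2(x)}\left(\bigl|\log|(x,y)-(x',y')|\bigr|+1\right)\ud y.
\]
Since the $y$-interval has length at most $1$, the one-dimensional integral $\int_{0}^{1}\bigl|\log\sqrt{a^2+(t-y')^2}\bigr|\,\ud t$ is bounded uniformly in $a\in\R$ and $y'\in[0,1]$, as one sees by splitting into $|t-y'|<1$ and $|t-y'|\geq 1$; this gives the required bound.

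The main obstacle is the boundary gradient estimate. The graphs $y=f_1(x)$ and $y=f_2(x)$ forming part of $\pa\Om^{(0)}(\tilde{x})$ are merely Lipschitz with slopes not controlled a priori (they can blow up near the vertical walls $x=\tilde{x}\pm 2$ if $h$ is small there), so standard $C^{1,\alpha}$-Schauder estimates cannot be applied directly; the argument rests essentially on convexity via the linear-barrier construction above. Some care is needed at the four corners where the vertical walls meet the graphs of $f_1,f_2$, but these corners are convex so the same barrier scheme still applies.
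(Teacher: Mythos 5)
First, a point of comparison: the paper does not prove this proposition at all---it is quoted directly as Lemma~6 of Grieser and Jerison \cite{GJ1}. So your argument is a genuine self-contained reconstruction rather than a reproduction of anything in the text. Your overall route---splitting off the fundamental solution $-\tfrac{1}{2\pi}\log|\cdot|$, bounding the harmonic correction $H$ by the maximum principle using the absolute bounds on diameter and inner radius of $\Omega^{(0)}(\tilde{x})$, and then integrating the logarithmic envelope over a unit $y$-interval---is the standard one and correctly yields the bound on $|G_0|$ for $|x-x'|\geq\tfrac12$ and the $L^\infty_x(L^1_y)$ estimate.

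The boundary gradient step, however, contains a gap as written. You propose to prove $G_0\leq C\ell$ on $B_r(p)\cap\Omega^{(0)}(\tilde{x})$ by the maximum principle, checking the inequality on the two pieces of the boundary. On $\partial\Omega^{(0)}(\tilde{x})$ this is fine ($0\leq C\ell$), but on the cap $\partial B_r(p)\cap\Omega^{(0)}(\tilde{x})$ you only have $G_0\leq C_0$, while $\ell$ is \emph{not} bounded below there: the cap meets the supporting line $\{\ell=0\}$ near $p$, so $\ell$ gets arbitrarily small and no fixed constant $C$ makes $C\ell\geq C_0$ on all of it. The comparison therefore cannot be closed with a purely linear barrier plus the $L^\infty$ bound. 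The standard repair is to compare instead with the harmonic function $w_0$ on the half-disk $B_r(p)\cap\{\ell>0\}\supset B_r(p)\cap\Omega^{(0)}(\tilde{x})$, with boundary data $C_0$ on the arc $\partial B_r(p)\cap\{\ell>0\}$ and $0$ on the diameter $\{\ell=0\}\cap \overline{B_r(p)}$. Then $w_0\geq G_0$ on $\partial\bigl(B_r(p)\cap\Omega^{(0)}(\tilde{x})\bigr)$ (it is nonnegative on $\partial\Omega^{(0)}(\tilde{x})$ and dominates the $L^\infty$ bound on the cap), so $G_0\leq w_0$ by the maximum principle; and $w_0$ is smooth up to the flat part (e.g.\ by odd reflection across $\{\ell=0\}$) with $w_0(q)\leq C'\ell(q)$ for $q$ near $p$, $C'$ depending only on $r$ and $C_0$. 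Taking $p$ to be the nearest boundary point to an interior point $(x,y)$ gives $\ell(x,y)=\mathrm{dist}((x,y),\partial\Omega^{(0)}(\tilde{x}))$, and combining with the interior gradient estimate as you describe then yields $|\partial_xG_0|\leq C$. One must also choose $r$ small enough (say $r\leq\tfrac18$) that $|x-x'|\geq\tfrac12$ forces $(x',y')\notin B_r(p)$; with that, your argument closes.
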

We can now convert \eqref{eqn:v-expression1} into a pointwise estimate in order to prove the desired estimate on $v-v_1$ in Theorem \ref{thm:Approx1}: In Lemma \ref{lem:Laplacian} let us write
\begin{align*}
\Delta(\chi(x)w(x,y)) = F_1(x,y) + F_2(x,y),
\end{align*}
with $F_1(x,y) = \chi''(x)w(x,y) + 2\chi'(x)\pa_xw(x,y)$, $F_2(x,y) = \chi(x)\sigma(x,y)$. Since $\chi(x)w(x,y)$ vanishes on the boundary of $\Omega^{(0)}(\tilde{x}) = \{(x,y)\in\Omega:|x-\tilde{x}|\leq 2\}$, we have
\begin{align*}
\chi(x')w(\tilde{x},y') = \int_{\Omega^{(0)}(\tilde{x})}G_0(x,y;\tilde{x},y') \left(F_1(x,y) + F_2(x,y)\right) \ud x \ud y.
\end{align*}
Using $\chi'(x)$, $\chi''(x) = 0$ for $|x-\tilde{x}|\leq 1$, the pointwise estimates on $G_0(x,y;\tilde{x},y')$ and $\pa_xG_0(x,y;\tilde{x},y')$ for $|x-\tilde{x}|\geq \tfrac{1}{2}$, together with \eqref{eqn:v-expression1} for $p=2$ say, implies that $\int_{\Omega^{(0)}(\tilde{x})}G_0(x,y;\tilde{x},y')F_1(x,y) \ud x \ud y$ has the required bound for $v(\tilde{x},y')-v_1(\tilde{x},y')$. We also have $F_2(x,y) = \chi(x)\sigma(x,y)$, and so using the integrated bound on $\sigma(x,y)$ from Lemma \ref{lem:Laplacian} together with the $L^{\infty}_xL^{1}_y$ estimate on $G_0(\cdot,\cdot;x',y')$ from Proposition \ref{prop:Green3}, we obtain the required bound for $\int_{\Omega^{(0)}(\tilde{x})}G_0(x,y;\tilde{x},y')F_1(x,y) \ud x \ud y$.
\\
\\
This completes the proof of the estimate on $v-v_1$ in Theorem \ref{thm:Approx1}, and so we now consider $\pa_xv(x',y')$. We will again use the approximate Green's function $G^{\tilde{x}}(x,y;x',y')$ to obtain an expression for $\pa_xv(x',y')$, and then bound the resulting terms to finish the proof of the theorem. We start with the integral
\begin{align} \label{eqn:paGv1}
   \lim_{\eps\to0}\int_{\Omega_{\eps}(\tilde{x})} \sum_{n\geq 1} \pa_xf_n(x;x') g_n(x,y) g_n(x',y')  \pa^2_{x}v(x,y) \ud x \ud y.
  \end{align}
Integrating by parts to remove an $x$-derivative from $\pa_xv(x,y)$ gives the integrals
\begin{align} \label{eqn:paGv2}
& - \lim_{\eps\to0}\int_{\Omega_{\eps}(\tilde{x})} \sum_{n\geq 1} \pa^2_xf_n(x;x') g_n(x,y) g_n(x',y')  \pa_{x}v(x,y) \ud x \ud y \\  \label{eqn:paGv3}
& - \lim_{\eps\to0}\int_{\Omega_{\eps}(\tilde{x})} \sum_{n\geq 1} \pa_xf_n(x;x') \pa_{x}g_n(x,y) g_n(x',y')  \pa_{x}v(x,y) \ud x \ud y,
\end{align}
together with the following boundary terms: Since $g_n(x,y) = 0$ for $y=f_1(x)$, $y=f_2(x)$, the only boundary terms from $\pa\Omega(\tilde{x})$, come from those $(x,y)\in\Omega$ with $|x-\tilde{x}| = \tfrac{1}{2}d(\tilde{x})$. Thus, by Corollary \ref{cor:Green1} these terms consist of acceptable error terms. As before, we also get a boundary term on $|x-x'|=\eps$, and since $f_n(x;x')$ is the Green's function for $\left(\pa_x^2-\frac{n^2\pi^2}{h(\tilde{x})^2}\right)$ on  $\left[\tilde{x} - \tfrac{1}{2}d(\tilde{x}),\tilde{x}+\tfrac{1}{2}d(\tilde{x})\right]$, this boundary term is equal to
 \begin{align*}
 \int_{f_1(x')}^{f_2(x')}\sum_{n \geq 1}  g_n(x',y)g_n(x',y') \pa_xv(x',y)  \ud y = \pa_xv(x',y').
 \end{align*}
 Using $\Delta v(x,y) = -1$, we can also write \eqref{eqn:paGv1} as
 \begin{align} \label{eqn:paGv4}
  - \lim_{\eps\to0}\int_{\Omega_{\eps}(\tilde{x})} \sum_{n\geq 1} \pa_xf_n(x;x') g_n(x,y) g_n(x',y')  \left(\pa^2_{y}v(x,y) +1\right) \ud x \ud y . 
 \end{align}
We first integrate by parts in $x$ to move the $x$ derivative away from $f_n(x;x')$. Since $G^{\tilde{x}}(x,y;x',y') = 0$ on $\pa \Omega(\tilde{x})$, we do not get any boundary terms, and so the integral in \eqref{eqn:paGv4} is equal to
\begin{align} \label{eqn:paGv5}
&  \lim_{\eps\to0}\int_{\Omega_{\eps}(\tilde{x})} \sum_{n\geq 1} f_n(x;x') \pa_{x}g_n(x,y) g_n(x',y')  \left(\pa^2_{y}v(x,y) +1\right) \ud x \ud y \\ \label{eqn:paGv6}
& +  \lim_{\eps\to0}\int_{\Omega_{\eps}(\tilde{x})} \sum_{n\geq 1} f_n(x;x') g_n(x,y) g_n(x',y')  \pa_x\pa^2_{y}v(x,y) \ud x \ud y . 
\end{align}
We then integrate by parts once in $y$ in the integral in \eqref{eqn:paGv5} to get
 \begin{align} \label{eqn:paGv7} 
    \lim_{\eps\to0}\int_{\Omega_{\eps}(\tilde{x})} \sum_{n\geq 1} f_n(x;x') g_n(x',y')\left(-\pa_{x}\pa_{y}g_n(x,y) \pa_yv(x,y) +\pa_xg_n(x,y)\right) \ud x \ud y,
 \end{align}
 together with the boundary terms on $y=f_1(x), f_2(x)$ given by,
 \begin{align} \label{eqn:paGv8}
\lim_{\eps\to0}\int_{\pa \Omega_{\eps}(\tilde{x})} \sum_{n\geq 1} f_n(x;x') \pa_xg_n(x,y) g_n(x',y') \pa_yv(x,y)  \nu_y\ud \sigma(x,y),
 \end{align}
 where $\nu_y$ is the $y$-component of the unit normal to the boundary. In \eqref{eqn:paGv6}, we integrate by parts twice in $y$ to rewrite it as
 \begin{align} \label{eqn:paGv9}
- \lim_{\eps\to0}\int_{\Omega_{\eps}(\tilde{x})} \sum_{n\geq 1} \frac{n^2}{h(x)^2} f_n(x;x') g_n(x,y) g_n(x',y')  \pa_xv(x,y) \ud x \ud y, 
 \end{align}
 together with the boundary term on $y=f_1(x), f_2(x)$,
 \begin{align} \label{eqn:paGv10}
 \lim_{\eps\to0}\int_{\pa \Omega_{\eps}(\tilde{x})} \sum_{n\geq 1} f_n(x;x') \pa_yg_n(x,y) g_n(x',y') \pa_xv(x,y)  \nu_y\ud \sigma(x,y).
 \end{align}
Again by Proposition \ref{prop:Green1} we have $\pa_x^2f_n(x;x') = \tfrac{n^2\pi^2}{h(\tilde{x})^2}f_n(x;x')$, and so $-$ \eqref{eqn:paGv2} $+$ \eqref{eqn:paGv9} equals
 \begin{align} \label{eqn:paGv11}
  \lim_{\eps\to0}\int_{\Omega_{\eps}(\tilde{x})}\sum_{n \geq 1}\left(\frac{1}{h(\tilde{x})^2}- \frac{1}{h(x)^2}\right)\pi^2n^2f_n(x;x') g_n(x,y)g_n(x',y')  \pa_xv(x,y) \ud x \ud y.
 \end{align}
  Bringing everything together, we have established the following lemma.
 \begin{lem} \label{lem:pav-expression}
Let $(x',y')\in \Omega(\tilde{x})$ with $|x'-\tilde{x}|\leq 1$. Then, $\pa_{x}v(x,y)$ is equal to the integrals
 \begin{align*}
 - \eqref{eqn:paGv3} +  \eqref{eqn:paGv7} +  \eqref{eqn:paGv11} + \emph{ Error},
 \end{align*}
 together with the boundary terms
 \begin{align*}
 \eqref{eqn:paGv8} + \eqref{eqn:paGv10}, 
 \end{align*}
 where the \emph{Error} is an \emph{acceptable error}. 
 \end{lem}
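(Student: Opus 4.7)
The plan is to compute the integral \eqref{eqn:paGv1} in two different ways and equate the results. The two routes differ only in the order in which derivatives are transferred from $v$ to the kernel $G^{\tilde{x}}$, but they produce complementary expressions whose difference isolates $\pa_xv(x',y')$. All of the ingredients have in fact already been assembled in the paragraphs preceding the lemma statement; what remains is to combine them carefully and to justify the passage $\eps \to 0$.

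For the first route, integrate by parts once in $x$ on \eqref{eqn:paGv1} to move one $x$-derivative off $\pa_x^2 v$, producing the two interior integrals $\eqref{eqn:paGv2} + \eqref{eqn:paGv3}$ plus boundary contributions. On the outer portion of $\pa\Omega(\tilde{x})$ lying on $|x-\tilde{x}|=\tfrac{1}{2}d(\tilde{x})$, Corollary \ref{cor:Green1} gives pointwise exponential decay of $G^{\tilde{x}}$, which together with Lemma \ref{lem:deriv} turns this boundary contribution into an acceptable error. On the inner boundary $|x-x'|=\eps$, the key point is that $f_n(x;x')$ is the Green's function of $\pa_x^2-\tfrac{n^2\pi^2}{h(\tilde{x})^2}$ on $[0,d(\tilde{x})]$ (Proposition \ref{prop:Green1}), so $\pa_xf_n$ has a unit jump across $x=x'$. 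Summing in $n$ and using completeness of $\{g_n(x',\cdot)\}$ in $L^2([f_1(x'),f_2(x')])$ as a Fourier sine basis produces a delta function in $y$, so the $\eps\to 0$ limit of the inner boundary contribution equals exactly $\pa_x v(x',y')$. Thus \eqref{eqn:paGv1} equals $\pa_xv(x',y') + \eqref{eqn:paGv2} + \eqref{eqn:paGv3}$ up to an acceptable error.

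For the second route, use $\Delta v = -1$ to rewrite \eqref{eqn:paGv1} as \eqref{eqn:paGv4}, integrate by parts once in $x$ (no boundary terms arise: $f_n$ vanishes at $x=0,d(\tilde{x})$ and is continuous across $x=x'$), and split the result into \eqref{eqn:paGv5} $+$ \eqref{eqn:paGv6}. In \eqref{eqn:paGv5}, integrate by parts once in $y$ on the $\pa_y^2 v$ factor; the boundary at $y=f_i(x)$ produces \eqref{eqn:paGv8} and the interior becomes \eqref{eqn:paGv7}. In \eqref{eqn:paGv6}, integrate by parts twice in $y$: the first IBP is clean because $g_n$ vanishes on $y=f_i(x)$, and the second yields the boundary term \eqref{eqn:paGv10} together with an interior term which, after applying $\pa_y^2 g_n = -\tfrac{n^2\pi^2}{h(x)^2}g_n$, becomes \eqref{eqn:paGv9}. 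Equating the two expressions for \eqref{eqn:paGv1} and rearranging gives
\begin{align*}
\pa_xv(x',y') = -\eqref{eqn:paGv2} - \eqref{eqn:paGv3} + \eqref{eqn:paGv7} + \eqref{eqn:paGv8} + \eqref{eqn:paGv9} + \eqref{eqn:paGv10} + \text{Error}.
\end{align*}
Finally, apply the ODE $\pa_x^2 f_n = \tfrac{n^2\pi^2}{h(\tilde{x})^2}f_n$ inside \eqref{eqn:paGv2} to convert $-\eqref{eqn:paGv2}$ into an integrand with coefficient $\tfrac{n^2\pi^2}{h(\tilde{x})^2}$, whose combination with \eqref{eqn:paGv9} telescopes into the single expression \eqref{eqn:paGv11}.

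The main technical point is the rigorous justification of interchanging the limit $\eps\to 0$ with the summation in $n$. For each fixed $\eps > 0$ this is fine because the series in Proposition \ref{prop:Green1} converges uniformly away from the diagonal, and all derivatives of $v$ appearing in the boundary terms are bounded by Lemma \ref{lem:deriv}. The delicate identification of the $|x-x'|=\eps$ boundary as $\pa_xv(x',y')$ is the only non-routine step, and it hinges on the jump relation for $\pa_xf_n$ together with the Fourier completeness described above; everything else is bookkeeping of signs, factors, and boundary vanishings already guaranteed by the construction of $G^{\tilde{x}}$.
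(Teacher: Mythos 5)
Your proof is correct and follows essentially the same route as the paper: evaluating \eqref{eqn:paGv1} in two ways (integrating by parts in $x$ versus first substituting $\Delta v=-1$ and then integrating by parts in $x$ and $y$), with the inner boundary at $|x-x'|=\eps$ producing $\pa_xv(x',y')$ via the jump of $\pa_xf_n$, and with $-\eqref{eqn:paGv2}+\eqref{eqn:paGv9}$ combining into \eqref{eqn:paGv11} via the ODE for $f_n$. The bookkeeping of interior terms, boundary terms, and acceptable errors matches the paper's derivation.
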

As for $v-v_1$, we will use Proposition \ref{prop:Green2} (and Remark \ref{rem:Green2}) to show that given $p$, with $1<p<\infty$, there exists a constant $C_p$ such that
\begin{align} \label{eqn:v-expression2}
\norm{\pa_xv}_{L^{p}(U)} \leq C_pe^{-c_1d(\tilde{x})} + C_p \sup_{|x-\tilde{x}|\leq \tfrac{3}{4}d(\tilde{x})}e^{-c_1|x-\tilde{x}|}\left|h(x)-h(\tilde{x})\right|,
\end{align}
where $U$ is the rectangle $[\tilde{x}-1,\tilde{x}+1] \times[f_1(\tilde{x}) +c^*/2,f_2(\tilde{x}) - c^*/2]$. Since $\pa_xv$ is harmonic, establishing \eqref{eqn:v-expression2} will complete the proof of Theorem \ref{thm:Approx}. The estimates on the double integrals in \eqref{eqn:paGv3}, \eqref{eqn:paGv7}, and \eqref{eqn:paGv11} follow from Proposition \ref{prop:Green2} exactly as for $v-v_1$. To deal with the boundary integrals in \eqref{eqn:paGv8} and \eqref{eqn:paGv10}, we first note that they each contain a factor of $f_1'(x)$, $h'(x)$ or $\pa_xv(x,y)$, and by Lemma \ref{lem:deriv}, we can bound $|\pa_xv(x,y)|$ by $C|h'(x)|$ on $\pa\Omega$. Moreover, in \eqref{eqn:paGv8} and \eqref{eqn:paGv10} we only have one derivative of $\tilde{G}_{h(\tilde{x}),d(\tilde{x})}$ appearing. Since from Proposition \ref{prop:Green2} we have control on integrals involving two derivatives of $\tilde{G}_{h(\tilde{x}),d(\tilde{x})}$, the required estimates on \eqref{eqn:paGv8} and \eqref{eqn:paGv10} follow from the trace theorem for Sobolev spaces. This establishes \eqref{eqn:v-expression2} and hence completes the proof of Theorem \ref{thm:Approx1}.

\section{The behaviour of the torsion function near its maximum} \label{sec:second-max}

We now focus on the behaviour of $v(x,y)$ for $x$ near a point $\bar{x}$ such that $h(\bar{x}) = 1$. Recalling that $(x^*,y^*)$ is the point where $v$ attains its maximum, with $v(x^*,y^*) = v^*>0$, we will prove:
\begin{thm} \label{thm:second-max1} Suppose that \emph{Property \ref{pro:max}} holds for some $M$ and for a value of $\delta$ with $\delta = \delta(M)$ sufficiently small. For each unit direction $n = (a,b)$, with $a^2+b^2=1$, define $\alpha_{n}$ by
\begin{align*}
\alpha_{n}  = \max\{|b|^2,\delta\}.
\end{align*}
Then, there exist constants $c_1^* = c_1^*(M)$, $C^*_1 = C^*_1(M)$ such that
\begin{align*}
\frac{1}{C_1^*}\alpha_{n}\leq -\pa_{\nu}^2 v(x,y) \leq C_1^*\alpha_{n}
\end{align*}
for all $(x,y)\in B_{c_1^*}(x^*,y^*)$.
\end{thm}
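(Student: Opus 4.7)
The plan is to convert the pointwise and first-derivative control from Theorem \ref{thm:Approx} and Property \ref{pro:max} into precise second-derivative estimates at $(x^*, y^*)$. Theorem \ref{thm:Approx} applied at $\tilde{x} = \bar{x}$, combined with Lemma \ref{lem:max-value1}, pins down the maximum near $\bigl(\bar{x}, \tfrac{1}{2}(f_1(\bar{x}) + f_2(\bar{x}))\bigr)$, with $\bigl|y^* - \tfrac{1}{2}(f_1(\bar{x}) + f_2(\bar{x}))\bigr|$ of order $\sqrt{\delta}$ (since the $y$-profile of $v_1$ drops by $\delta$ in a window of size $\sqrt{\delta}$ around the midline). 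Property \ref{pro:max} guarantees $\mathrm{Error}(\tilde{x}) \leq \delta/100$ throughout $[x_-, x_+]$, so Theorem \ref{thm:Approx} gives $|\partial_x v(\tilde{x}, y)| \leq \delta/100$ on this entire strip. Since $\Delta v = -1$, the function $\partial_x v$ is harmonic, so interior gradient estimates in balls $B_{r(M)}(x^*, y^*)$ produce $|\partial_x^2 v|, |\partial_{xy} v| \leq C(M)\delta$ in a smaller neighbourhood. Using $\partial_y^2 v = -1 - \partial_x^2 v$ then yields $-\partial_y^2 v \in [1 - C(M)\delta, 1 + C(M)\delta]$, which is of order $1$. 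Together these immediately deliver the upper bound $-\partial_\nu^2 v \leq C_1^*(M)\alpha_\nu$.

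The crux is the lower bound $-\partial_x^2 v(x^*, y^*) \geq c(M)\delta$. Combining Property \ref{pro:max} with Theorem \ref{thm:Approx} and Lemma \ref{lem:max-value1}, the points $x_\pm$ where $h = 1 - 2\delta$ satisfy
\[
v(x_\pm, y^*) \leq v_1(x_\pm, y^*) + \tfrac{\delta}{100} \leq \tfrac{1}{8}(1 - 2\delta)^2 + \tfrac{\delta}{100} \leq v^* - c\delta.
\]
Since $\partial_x v(x^*, y^*) = 0$, integration by parts along $y = y^*$ then yields the averaged estimate
\[
\int_{x^*}^{x_\pm} (x_\pm - t)\bigl(-\partial_x^2 v(t, y^*)\bigr)\, dt = v^* - v(x_\pm, y^*) \geq c\delta,
\]
so $\sup_{t \in [x^*, x_+]}\bigl(-\partial_x^2 v(t, y^*)\bigr) \geq 2c\delta/M^2$. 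To upgrade this averaged bound to a pointwise estimate at $(x^*, y^*)$, I would revisit the integral representation of $v - v_1$ from Lemma \ref{lem:v-expression}, differentiate it twice in the first variable, and bound each resulting term using the Green's function estimates of Propositions \ref{prop:Green1}--\ref{prop:Green2}. This produces an expression for $\partial_x^2 v(x^*, y^*)$ as $\partial_x^2 v_1(x^*, y^*)$ plus a remainder controlled in terms of $h'$; combined with the shape of the superlevel set $\{v \geq v^* - c\delta\}$ (which Property \ref{pro:max} forces to extend a distance $\sim M$ in $x$ and $\sim\sqrt{\delta}$ in $y$), this delivers the required lower bound.

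Once the three entries of the Hessian at $(x^*, y^*)$ are controlled, the directional estimate follows algebraically: for $\nu = (a, b)$ with $a^2 + b^2 = 1$,
\[
-\partial_\nu^2 v = a^2\bigl(-\partial_x^2 v\bigr) + 2ab\bigl(-\partial_{xy} v\bigr) + b^2\bigl(-\partial_y^2 v\bigr),
\]
and a case split on whether $b^2 \geq \delta$ or $b^2 < \delta$ gives $-\partial_\nu^2 v$ comparable to $\max\{b^2, \delta\} = \alpha_\nu$, with constants depending only on $M$ (the cross term is absorbed because $|\partial_{xy} v| \leq C(M)\delta$ is subdominant in both cases). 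Continuity of $\nabla^2 v$, together with the same interior estimates applied in a slightly larger ball, extends the bound from $(x^*, y^*)$ to the entire ball $B_{c_1^*(M)}(x^*, y^*)$. The main obstacle throughout is the pointwise lower bound on $-\partial_x^2 v(x^*, y^*)$: the averaged bound from integration by parts does not by itself transfer to a pointwise bound via interior estimates, and its resolution requires working directly with the integral expression underlying Theorem \ref{thm:Approx} rather than just the pointwise error it provides.
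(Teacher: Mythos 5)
Your outline of the geometry is broadly right --- the location of $(x^*,y^*)$, the shape of the superlevel sets, the smallness of $\partial_x v$ on the strip $[x_-,x_+]\times[f_1+c^*,f_2-c^*]$, and the resulting upper bound $|\partial_x^2 v|, |\partial_{xy}v|\leq C(M)\delta$ by interior estimates on the harmonic function $\partial_x v$. You also correctly identify that the real difficulty is the pointwise lower bound $-\partial_x^2 v(x^*,y^*)\geq c(M)\delta$, and that the averaged estimate $\int(x_\pm-t)(-\partial_x^2 v)\,dt\geq c\delta$ does not by itself localise.

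However, the route you propose to close this gap --- differentiating the integral representation of $v-v_1$ from Lemma \ref{lem:v-expression} twice in $x'$ and writing $\partial_x^2 v=\partial_x^2 v_1+\text{(remainder in $h'$)}$ --- cannot work. The quantity $\partial_x^2 v_1$ involves $f_1''$ and $f_2''$ pointwise, and these are not controlled by the hypotheses: Property \ref{pro:max} and the whole of Section \ref{sec:Approx} are carefully arranged so that only one derivative ever lands on $h$, precisely because $h''$ is merely a non-positive measure with no pointwise bound. Concretely, Remark \ref{rem:pro1} admits profiles $h(x)\approx 1-\beta_1|x|^{\gamma_1}/N^{\gamma_1}$ with any $\gamma_1\geq 1$; for $1\leq\gamma_1<2$ one has $h''(x)\to -\infty$ as $x\to\bar{x}$, so $\partial_x^2 v_1$ is unbounded at the very point you are trying to estimate, yet the theorem asserts a finite two-sided bound for $-\partial_x^2 v$. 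A comparison of $\partial_x^2 v$ with $\partial_x^2 v_1$ therefore cannot produce the statement.

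The missing ingredient is Makar-Limanov's theorem that $v^{1/2}$ is concave, which is never invoked in your argument. It gives the pointwise sign constraint $v\,\partial_n^2 v-\tfrac{1}{2}(\partial_n v)^2\leq 0$, and together with $v^*\geq\tfrac18-\tfrac1{100}\delta$ and the first-derivative bounds this makes $F_n=-\partial_n^2 v+5\beta_n^2$ a \emph{non-negative} harmonic function on a suitable rectangle. One can then apply the Harnack inequality (Proposition \ref{prop:Harnack}) to $F_n$: because it is comparable to $\delta$ (resp.\ to $|b|^2$) at some point of a superlevel set whose dimensions are pinned down by Property \ref{pro:max} via Lemma \ref{lem:level1}, Harnack upgrades this to comparability at every point of a fixed ball around $(x^*,y^*)$. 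That is the mechanism the paper uses to convert the averaged information you have into the pointwise lower bound, and it also absorbs the $5\beta_n^2$ correction once $\eps^*$ and $\delta$ are chosen appropriately. Without the concavity of $v^{1/2}$ there is no one-signed harmonic quantity to apply Harnack to, and without Harnack the averaged bound does not localise; both tools are needed and both are absent from your proposal.
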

From now on we assume that  Property \ref{pro:max} holds for some $M$ and for a value of $\delta$ with $\delta = \delta(M)$ sufficiently small, to be specified below. To begin the proof of the theorem, we first recall from Lemma \ref{lem:max-value1} that  Theorem \ref{thm:Approx} together with Property \ref{pro:max} implies the following bound on the maximal value $v^*$.
\begin{lem} \label{lem:max-value}
The maximal value of $v$ satisfies
\begin{align*} 
\tfrac{1}{8} - \tfrac{1}{100}\delta \leq v^* \leq \tfrac{1}{8}.
\end{align*}
\end{lem}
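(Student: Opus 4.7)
The plan is to observe that this statement is essentially a restatement of Lemma \ref{lem:max-value1}, which has already been established in Section \ref{sec:Eigenfunction}, with the present formulation simply incorporating the standing assumption that Property \ref{pro:max} holds for some $M$ and $\delta = \delta(M)$. So my approach is to cite Lemma \ref{lem:max-value1} directly, but for clarity I would briefly re-derive each of the two bounds.

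For the upper bound $v^* \le \tfrac{1}{8}$, I would use the maximum principle, noting that this bound is independent of Property \ref{pro:max}. Since $0 \le f_1(x) \le f_2(x) \le 1$, the comparison function $w(x,y) = \tfrac{1}{2}y(1-y) - v(x,y)$ is harmonic in $\Omega$ and non-negative on $\partial\Omega$ (because $v$ vanishes there while $\tfrac{1}{2}y(1-y) \ge 0$ on $[0,1]$). Hence $w \ge 0$ in $\Omega$, which yields $v(x,y) \le \tfrac{1}{2}y(1-y) \le \tfrac{1}{8}$ pointwise, and in particular $v^* \le \tfrac18$.

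For the lower bound, I would evaluate $v$ at the test point $(\bar{x}, \tfrac{1}{2})$, where by definition $h(\bar{x}) = 1$, forcing $f_1(\bar{x}) = 0$ and $f_2(\bar{x}) = 1$ (since $0 \le f_1 \le f_2 \le 1$). Thus
\begin{align*}
v_1(\bar{x}, \tfrac{1}{2}) = \tfrac{1}{2}(\tfrac{1}{2} - 0)(1 - \tfrac{1}{2}) = \tfrac{1}{8}.
\end{align*}
Next apply Theorem \ref{thm:Approx} at $\tilde{x} = \bar{x}$ with, say, $c^* = \tfrac{1}{4}$: the hypothesis $h(\bar{x}) \ge \tfrac{1}{2}$ is trivially satisfied, and $y = \tfrac{1}{2}$ lies in $[f_1(\bar{x}) + c^*, f_2(\bar{x}) - c^*] = [\tfrac{1}{4}, \tfrac{3}{4}]$. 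This gives
\begin{align*}
|v(\bar{x}, \tfrac{1}{2}) - \tfrac{1}{8}| \le \mathrm{Error}(\bar{x}).
\end{align*}

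The final step invokes Property \ref{pro:max}: since $x_- \in [\bar{x}-M, \bar{x}]$ and $x_+ \in [\bar{x}, \bar{x}+M]$, the point $\bar{x}$ itself lies in $[x_-, x_+]$, so the property guarantees $\mathrm{Error}(\bar{x}) \le \tfrac{1}{100}\delta$. Combining gives $v^* \ge v(\bar{x}, \tfrac{1}{2}) \ge \tfrac{1}{8} - \tfrac{1}{100}\delta$, as required. There is no genuine obstacle here: the only thing to verify is that $\bar{x}$ lies in the interval $[x_-, x_+]$ on which Property \ref{pro:max} provides the error bound, and this is immediate from how $x_\pm$ are defined around $\bar{x}$.
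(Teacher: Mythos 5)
Your proposal is correct and follows essentially the same route as the paper, which simply recalls Lemma \ref{lem:max-value1}: the upper bound via the maximum principle applied to $\tfrac{1}{2}y(1-y)-v$, and the lower bound by evaluating $v_1(\bar{x},\tfrac{1}{2})=\tfrac{1}{8}$ and invoking Theorem \ref{thm:Approx} together with Property \ref{pro:max} at $\tilde{x}=\bar{x}\in[x_-,x_+]$. Your extra checks (that $h(\bar{x})=1$ forces $f_1(\bar{x})=0$, $f_2(\bar{x})=1$, and that $y=\tfrac{1}{2}$ lies in the admissible range for $c^*=\tfrac14$) are fine and consistent with the paper's argument.
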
 
The key place where we use Property \ref{pro:max} is that it allows us to determine the shape of a level set of $v(x,y)$ that extends precisely a distance comparable to $M$ from $x^*$ in the $x$ direction. As shown by Makar-Limanov in \cite{ML}, $v^{1/2}$ is concave in $\Omega$ and so $v$ has convex superlevel sets. In particular, by the John lemma, \cite{Jo}, we can associate an ellipse contained in each superlevel set, so that a dilation of the ellipse about its centre by an absolute constant contains the superlevel set. 
\begin{lem} \label{lem:level1}
For $\eta$ in the range $\tfrac{1}{10} \delta \leq \eta \leq \tfrac{3}{20}\delta$, we can take the John ellipse of the superlevel sets $\Omega_{\eta} = \{(x,y)\in\Omega:v(x,y) \geq v^* - \eta\}$ to have axes parallel to the coordinate axes. Let $I_{x}^{\eta}$, $I_{y}^{\eta}$ be the projections of $\Omega_{\eta}$ onto the $x$ and $y$-axis respectively, with lengths $L_x^{\eta}$, and $L_y^{\eta}$. Then, there exists an absolute constant $C_2$ such that
\begin{align*}
C_2^{-1}|x_{+}-x_{-}| \leq L_x^{\eta} \leq |x_{+}-x_{-}|, \qquad C_2^{-1}\sqrt{\eta} \leq L_y^{\eta} \leq C_2\sqrt{\eta} . 
\end{align*}
Moreover, the distance between $I_{x}^{\delta/10}$ and $\pa I_{x}^{3\delta/20}$ is bounded below by $C_2^{-1}$, and $I_y^{\eta}$ contains the point $\tfrac{1}{2}$ for this range of $\eta$. 
\end{lem}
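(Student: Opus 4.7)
The plan is to leverage Theorem~\ref{thm:Approx} to approximate $v$ by the explicit function $v_1(x,y)=\tfrac{1}{2}(y-f_1(x))(f_2(x)-y)$ and then read off the shape of the superlevel sets from $v_1$. By Property~\ref{pro:max}, for $\tilde{x}\in[x_-,x_+]$ the Error is at most $\tfrac{\delta}{100}$, so Theorem~\ref{thm:Approx} yields $|v-v_1|\leq\tfrac{\delta}{100}$ and $|\partial_xv|\leq\tfrac{\delta}{100}$ on this range, and Lemma~\ref{lem:max-value1} places $v^*\in[\tfrac{1}{8}-\tfrac{\delta}{100},\tfrac{1}{8}]$. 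The central identity is $\max_y v_1(x,y)=h(x)^2/8$, attained at $y=\mathrm{mid}(x)\coloneqq(f_1(x)+f_2(x))/2$, with $|\mathrm{mid}(x)-\tfrac{1}{2}|\leq\tfrac{1-h(x)}{2}\leq\delta$ on $[x_-,x_+]$.

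For the upper bound $L_x^\eta\leq|x_+-x_-|$ with $\eta\leq\tfrac{3\delta}{20}$: at $x=x_\pm$ one has $v(x_\pm,y)\leq h(x_\pm)^2/8+\tfrac{\delta}{100}=\tfrac{1}{8}-\tfrac{\delta}{2}+O(\delta^2)+\tfrac{\delta}{100}<v^*-\tfrac{3\delta}{20}$ for $\delta$ small, and the monotonicity of $h$ outside $[x_-,x_+]$ extends this to $|x-\bar{x}|\geq|x_\pm-\bar{x}|$, so $I_x^\eta\subset(x_-,x_+)$. For the matching lower bound $L_x^\eta\geq c_0|x_+-x_-|$ with $\eta\geq\tfrac{\delta}{10}$: the concavity lower bound $h(x)\geq 1-2\delta(x-\bar{x})/(x_+-\bar{x})$ on $[\bar{x},x_+]$ shows that on the interval $[\bar{x}-c_0(\bar{x}-x_-),\bar{x}+c_0(x_+-\bar{x})]$, with an absolute constant $c_0$ (e.g.\ $9/25$), one has $h(x)^2/8\geq\tfrac{1}{8}-\tfrac{\delta}{10}+\tfrac{\delta}{100}$, and this whole interval then lies in $I_x^{\delta/10}$ by the approximation and $v^*\leq\tfrac{1}{8}$.

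For $L_y^\eta$, the quadratic structure $v_1(x,y)=h(x)^2/8-\tfrac{1}{2}(y-\mathrm{mid}(x))^2$ reduces $v(x,y)\geq v^*-\eta$ (up to $\tfrac{\delta}{100}$) to $(y-\mathrm{mid}(x))^2\leq 2\eta+O(\delta)$. At $x=\bar{x}$, where $h(\bar{x})=1$ forces $f_1(\bar{x})=0$, $f_2(\bar{x})=1$ and so $\mathrm{mid}(\bar{x})=\tfrac{1}{2}$, this reads $(y-\tfrac{1}{2})^2\leq 2\eta-O(\delta)$, which for $\eta\geq\tfrac{\delta}{10}$ simultaneously yields the lower bound $L_y^\eta\gtrsim\sqrt{\eta}$ and the fact $\tfrac{1}{2}\in I_y^\eta$ (since $v(\bar{x},\tfrac{1}{2})\geq\tfrac{1}{8}-\tfrac{\delta}{100}\geq v^*-\eta$). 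The upper bound $L_y^\eta\leq C\sqrt{\eta}$ at general $x$ follows from the same computation together with $|\mathrm{mid}(x)-\tfrac{1}{2}|\leq\delta\leq 2\sqrt{\eta}$, which absorbs the midpoint drift into the $\sqrt{\eta}$ scale.

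The axis-aligned John ellipse is constructed as $E=\{(x-x^*)^2/r_x^2+(y-y^*)^2/r_y^2\leq 1\}$ with $r_x$ a small constant times $|x_+-x_-|$ and $r_y$ a small constant times $\sqrt{\eta}$: $E\subset\Omega_\eta$ is checked from the quadratic form of $v_1$ at points of $E$, using the midpoint-drift estimate, while $\Omega_\eta\subset C_2 E$ follows from the projection bounds together with the elementary fact that the bounding rectangle of a planar convex body is contained in a $\sqrt{2}$-dilation of its inscribed axis-aligned ellipse. Finally, the gap between $I_x^{\delta/10}$ and $\partial I_x^{3\delta/20}$ is obtained from the derivative bound $|\partial_xv|\leq\tfrac{\delta}{100}$: by the envelope theorem, $x\mapsto M(x)\coloneqq\max_y v(x,y)$ is $\tfrac{\delta}{100}$-Lipschitz, and since $M$ must drop from $v^*-\tfrac{\delta}{10}$ to $v^*-\tfrac{3\delta}{20}$ between the two boundaries, the gap is at least $(\delta/20)/(\delta/100)=5$. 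The main technical point is the axis-aligned ellipse construction, where the midpoint drift of $v_1$ in the $y$-variable has to be absorbed into the $r_y\sim\sqrt{\eta}$ scale; this is valid provided $\delta\ll\sqrt{\delta/10}$, i.e.\ $\delta$ is sufficiently small.
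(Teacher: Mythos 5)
Your proposal follows the paper's strategy for the bounds on $L_x^\eta$, $L_y^\eta$, the axis-aligned ellipse, and the fact that $\tfrac{1}{2}\in I_y^\eta$: approximate $v$ by $v_1$ on $[x_-,x_+]$ via Theorem~\ref{thm:Approx} and Property~\ref{pro:max}, use $\max_y v_1(x,\cdot)=h(x)^2/8$ at the midpoint, combine with $v^*\in[\tfrac18-\tfrac{\delta}{100},\tfrac18]$, and use concavity of $h$ together with $h(x_\pm)=1-2\delta$. (Incidentally, you correctly use $h(x)^2/8$; the paper writes ``$\tfrac18 h(x)$'' at this point, which is a slip that affects only the paper's explicit choices $h(x_1)=1-\delta$, $h(x_2)=1-\tfrac{26}{25}\delta$ but not the structure of the argument, and your $c_0=9/25$ should really be something like $9/50$, but the exact absolute constant is immaterial.) Where you genuinely diverge is the gap between $I_x^{\delta/10}$ and $\partial I_x^{3\delta/20}$: you invoke the $|\partial_x v|\leq\tfrac{\delta}{100}$ bound from Theorem~\ref{thm:Approx} and an envelope-theorem Lipschitz estimate for $M(x)=\max_y v(x,y)$ to conclude the drop of $\delta/20$ forces a gap $\geq 5$. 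The paper instead never uses the $\partial_x v$ estimate here; it picks two explicit $h$-levels near $1-\delta$ bracketing the two interval endpoints and argues that if they were close then the concavity of $h$ would force $|h'|$, and hence $\mathrm{Error}(x_2)$ via Remark~\ref{rem:Approx}, to exceed $\tfrac{\delta}{100}$. Both are valid; your route is arguably cleaner since it uses the gradient estimate that Theorem~\ref{thm:Approx} explicitly provides, while the paper's is self-contained within the $h$-geometry. The one thing to make explicit in your version is that the Lipschitz bound is applied along $y=y^*(x)$, and that $y^*(x)$ remains in the strip $[f_1+c^*,f_2-c^*]$ where the derivative bound of Theorem~\ref{thm:Approx} is valid (this holds since $y^*(x)$ is within $O(\delta)$ of the midpoint and $h\geq\tfrac12$); also that the gap region lies inside $(x_-,x_+)$ so Property~\ref{pro:max} applies there, which you have already established via $I_x^{3\delta/20}\subset(x_-,x_+)$.
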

\begin{proof}{Lemma \ref{lem:level1}}
For $x$ fixed, the function $v_1(x,y) = \tfrac{1}{2}(y-f_1(x))(f_2(x)-y)$ attains its maximum of $\tfrac{1}{8}h(x)$ at $y=\tfrac{1}{2}(f_1(x)+f_2(x))$. Therefore, for $x = x_{\pm}$, we have
\begin{align*}
v_1(x,y) \leq \tfrac{1}{8} - \tfrac{1}{4}\delta.
\end{align*}
Combining this with the estimate on $v^*$ from Lemma \ref{lem:max-value}, and the assumed bound from Property \ref{pro:max} gives the required upper bound on $L_{x}^{\eta}$. Since $h(x)$ is concave, and attains its maximum of $1$, we have $h(x)\geq 1 - \tfrac{1}{10}\delta$ on an interval of length comparable to $x_{+}-x_{-}$. Using  Property \ref{pro:max} again thus gives the lower bound on $L_{x}^{\eta}$. For fixed $x$, $v_1(x,y)$ is a quadratic function of $y$, and so the upper and lower bounds on $L_y^{\eta}$ follow easily. Moreover, the projections of these superlevel sets onto any other direction have lengths bounded between $C_2\eta$ and $C_2^{-1}|x_{+}-x_{-}|$, which ensures that the John ellipses of $\Omega_{\eta}$ can be taken with axes parallel to the coordinate axes.  At $y = \tfrac{1}{2}$, $x = \bar{x}$, we have $v_1(x,y) = \tfrac{1}{8}$, and so $I_{y}^{\eta}$ certainly contains $\tfrac{1}{2}$ for this range of $\eta$. 
\\
\\
To obtain the separation between $I_{x}^{\delta/10}$ and $\pa I_{x}^{3\delta/20}$, we argue as follows: Let $x_1$ and $x_2$ be the two points to the right of $\bar{x}$ such that $h(x)$ equals $1-\delta$ and $1-\tfrac{26}{25}\delta$. Then, using Theorem \ref{thm:Approx} and Property \ref{pro:max}, the point $x_1$ is not contained in $I_{x}^{\delta/10}$, while $x_2\in I_{x}^{3\delta/20}$. We also have the analogous points to the left of $\bar{x}$. Therefore, to conclude the proof of the lemma, we need to obtain a lower bound on $x_2-x_1$. Since $h(x)$ decreases by $\tfrac{1}{25}\delta$ on the interval $[x_1,x_2]$, and $h(x)$ is concave, if $x_2-x_1$ is bounded above by a sufficiently small absolute constant, this would contradict the assumption from Property \ref{pro:max} that Error$(x_2) \leq \tfrac{1}{100}\delta$. 
\end{proof}
We want to combine this lemma with a Harnack inequality applied to the second derivatives of $v$ in order to obtain the bounds of Theorem \ref{thm:second-max1}. However, to apply a Harnack inequality we need a quantity that is of one sign, and a priori we only know that  $v^{1/2}$ is concave in $\Omega$, so that
 \begin{align} \label{eqn:half-concave}
v\pa_{n}^2v - \tfrac{1}{2}\left(\pa_{n}v\right)^2  \leq 0. 
\end{align}
Therefore, we first need to bound $\nabla v$ near to $(x^*,y^*)$. Let $\eps^*>0$ be a small absolute constant, to be determined in Lemma \ref{lem:second-deriv1} below, depending on constants appearing in elliptic estimates and the Harnack inequality. We assume that Property \ref{pro:max} holds with $\delta = \delta(M)>0$ sufficiently small so that $\delta < \tfrac{1}{100}\eps^*$.
\begin{lem} \label{lem:x-first}
For $\eps^*>0$ given, define the rectangle $R^{\delta,\eps^*}$ by
\begin{align*}
I_{x}^{3\delta/20} \times\left[\tfrac{1}{2}-\sqrt{\eps^*},\tfrac{1}{2}+\sqrt{\eps^*}\right].
\end{align*}
There exists an absolute constant $C_3$ $($independent of $\eps^*$$)$ such that for $(x,y)\in R^{\delta,\eps^*}$, we have the first derivative bounds
\begin{align*}
|\pa_{x}v(x,y)|\leq \tfrac{1}{100}\delta, \qquad |\pa_{y}v(x,y)|\leq C_3\sqrt{\eps^*}.
\end{align*}
\end{lem}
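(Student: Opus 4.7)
The plan is to handle the two bounds separately. The $\pa_x v$ estimate is essentially immediate from Theorem \ref{thm:Approx} and Property \ref{pro:max}, while the $\pa_y v$ estimate uses the level-set geometry in Lemma \ref{lem:level1} to locate, for each $x$, a point $y^{(x)}$ where $v(x, \cdot)$ is maximized, and then integrates $\pa_y^2 v$ outward from there.

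For $\pa_x v$, I would first verify that $I_x^{3\delta/20} \subset (x_-, x_+)$: since $h(x_\pm) = 1 - 2\delta$ gives $v_1(x_\pm, y) \leq \tfrac{1}{8}(1 - 2\delta)^2 \leq \tfrac{1}{8} - \tfrac{1}{2}\delta + O(\delta^2)$, combining with $v^* \geq \tfrac{1}{8} - \tfrac{1}{100}\delta$ (Lemma \ref{lem:max-value}) and Error$(x_\pm) \leq \tfrac{1}{100}\delta$ via Theorem \ref{thm:Approx} yields $v(x_\pm, y) < v^* - \tfrac{3}{20}\delta$. Hence for any $\tilde{x} \in I_x^{3\delta/20}$, Property \ref{pro:max} gives Error$(\tilde{x}) \leq \tfrac{1}{100}\delta$, and the $\pa_x v$ inequality of Theorem \ref{thm:Approx} (applied with some fixed $c^* \leq \tfrac{1}{4}$) gives $|\pa_x v(\tilde{x}, y)| \leq \tfrac{1}{100}\delta$. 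The required range $y \in [\tfrac{1}{2} - \sqrt{\eps^*}, \tfrac{1}{2} + \sqrt{\eps^*}]$ lies inside $[f_1(\tilde{x}) + c^*, f_2(\tilde{x}) - c^*]$ because $h(\tilde{x}) \geq 1 - 2\delta$ forces $f_1(\tilde{x}) \leq 2\delta$ and $f_2(\tilde{x}) \geq 1 - 2\delta$, provided $\eps^*$ is small enough.

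For $\pa_y v$, for each $x \in I_x^{3\delta/20}$ let $y^{(x)}$ be a point in $(f_1(x), f_2(x))$ where $v(x, \cdot)$ attains its maximum, so $\pa_y v(x, y^{(x)}) = 0$. Since $x \in I_x^{3\delta/20}$ means $\max_y v(x, y) \geq v^* - \tfrac{3}{20}\delta$, we have $y^{(x)} \in I_y^{3\delta/20}$, and Lemma \ref{lem:level1} then yields $|y^{(x)} - \tfrac{1}{2}| \leq C\sqrt{\delta}$. Using $\delta \leq \tfrac{1}{100}\eps^*$, this is $\leq \tfrac{C}{10}\sqrt{\eps^*}$, so $|y - y^{(x)}| \leq C'\sqrt{\eps^*}$ throughout $R^{\delta,\eps^*}$. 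A uniform bound on $\pa_y^2 v$ follows from interior elliptic regularity for $\Delta v = -1$: any point of $R^{\delta,\eps^*}$ lies at distance $\geq \tfrac{1}{4}$ from the horizontal boundary of $\Om$ (since $h(x) \geq 1 - 2\delta$) and at distance of order $\log(1/\delta)$ from the vertical ends $x = a, b$ (since $C_1 e^{-c_1 d(x)} \leq$ Error$(x) \leq \tfrac{1}{100}\delta$), so $\|D^2 v\|_{L^\infty(R^{\delta,\eps^*})}$ is bounded by an absolute constant $C$. The desired bound then follows from
\begin{equation*}
|\pa_y v(x, y)| = \left| \int_{y^{(x)}}^{y} \pa_y^2 v(x, s) \, \ud s \right| \leq C |y - y^{(x)}| \leq C_3 \sqrt{\eps^*}.
\end{equation*}

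The main obstacle is the localization of $y^{(x)}$; Lemma \ref{lem:level1} makes this essentially costless once one recognizes $I_y^{3\delta/20}$ as the relevant object. Without it, one would need a more cumbersome direct comparison between $v(x, \cdot)$ and the explicit quadratic $v_1(x, \cdot)$ via Theorem \ref{thm:Approx} together with the concavity of $v^{1/2}$ in the $y$-variable.
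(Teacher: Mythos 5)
Your proof of the $\pa_x v$ bound is essentially the same as the paper's (invoking Theorem \ref{thm:Approx} and Property \ref{pro:max}, after confirming $I_x^{3\delta/20}\subset(x_-,x_+)$ so that the Error hypothesis applies throughout the rectangle). For the $\pa_y v$ bound, however, you take a genuinely different route, and it is a correct one. The paper's argument is a parabolic rescaling: it first uses $\nabla v(x^*,y^*)=\mathbf{0}$ and interior second-derivative estimates to show that $\Omega_\eps$ contains a disc of radius $c\sqrt{\eps}$ around $(x^*,y^*)$, then rescales by $\sqrt{\eps}$ (with $\eps$ comparable to $\eps^*$) to produce a function $\tilde v$ with $\Delta\tilde v=-1$, vanishing on a region of inner radius $\geq c$, and invokes a gradient bound on $\tilde v$ away from that boundary; undoing the scaling gives $|\nabla v|\lesssim\sqrt{\eps^*}$ in a region containing $R^{\delta,\eps^*}$. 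Your argument instead works slice by slice in $x$: you locate the critical point $y^{(x)}$ of $v(x,\cdot)$, use the $L_y^\eta\lesssim\sqrt{\eta}$ bound from Lemma \ref{lem:level1} together with $\delta\leq\tfrac{1}{100}\eps^*$ to get $|y-y^{(x)}|\lesssim\sqrt{\eps^*}$ on the rectangle, and integrate $\pa_y^2 v$ (bounded by an absolute constant via interior Schauder estimates, since the rectangle sits at unit distance from $\pa\Omega$). This is more elementary and makes the role of the level-set width in the $y$-direction very explicit; the paper's rescaling argument is slightly more economical in that it controls the full gradient at once, but both are about the same length and each is self-contained given Lemma \ref{lem:level1} and standard elliptic estimates. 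One small point to keep in mind when writing this up formally: the statement that $R^{\delta,\eps^*}$ lies at distance $\geq\tfrac14$ from $\pa\Omega$ uses not only $h(x)\geq 1-2\delta$ (hence $f_1(x)\leq 2\delta$, $f_2(x)\geq 1-2\delta$) on $I_x^{3\delta/20}$, but also that $\eps^*$ is a small absolute constant and that $h$ varies slowly nearby (concavity plus $d(\tilde x)\gtrsim\log(1/\delta)$, as you note), so that the ball of radius $\tfrac14$ about each point of the rectangle really is contained in $\Omega$.
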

\begin{proof}{Lemma \ref{lem:x-first}}
The bound on $\pa_{x}v(x,y)$ follows immediately from Theorem \ref{thm:Approx} and Property \ref{pro:max}. For the bounds on $\pa_yv(x,y)$, we first note that since $\nabla v(x^*,y^*) = \textbf{0}$,  interior second derivative elliptic estimates on $v$ implies that there exists a constant $c>0$ so that  the superlevel set $\Omega_{\eps} = \{(x,y)\in\Omega: v(x,y) \geq v^*-\eps\}$ contains a disc of radius $c\sqrt{\eps}$ centred at $(x^*,y^*)$. Therefore, the function
\begin{align*}
\tilde{v}(x,y) = \eps^{-1}\left(v(\sqrt{\eps}x+x^*,\sqrt{\eps}y+y^*) - v^*+\eps\right).
\end{align*}
 satisfies $\Delta \tilde{v} =-1$,  attains a maximum of $1$ at the origin, and vanishes on the boundary of a region of inner radius at least $c$. The gradient of $\tilde{v}$ is thus bounded away from the boundary of this region. Since $\delta < \tfrac{1}{100}\eps^*$, there exist absolute constants $\tilde{c}_1, \tilde{c}_2>0$ such that distance between the level set $\{(x,y)\in\Omega: v= v^*-\tilde{c}_1\eps^*\}$ and the  rectangle $R^{\delta,\eps^*}$ is bounded below by $\tilde{c}_2\sqrt{\eps^*}$. The estimate on $\pa_yv(x,y)$ then follows from this estimate on $\nabla \tilde{v}$. 
\end{proof}
\begin{rem} \label{rem:separation}
By \emph{Lemma \ref{lem:level1}}, the superlevel set $\Omega_{\delta/10}$ is contained within the rectangle $R^{\delta,\eps^*}$, and there exists an absolute constant $c_3>0$ such that the Hausdorff distance between $\Omega_{\delta/10}$ and $\pa R^{\delta,\eps^*}$ is greater than $c_3\sqrt{\eps^*}$. 
\end{rem}
We will combine the estimates from Lemma \ref{lem:x-first} with \eqref{eqn:half-concave} in order to apply the Harnack inequality.
\begin{prop}[Harnack inequality, Theorem 8.17 in \cite{GT}] \label{prop:Harnack}
Let $F\geq0$ be a harmonic function in the rectangle $R^{\delta,\eps^*}$. Then, there exist a constant $\tilde{C}_1 = \tilde{C}_1(\eps^*,M)$ and an absolute constant $\tilde{C}_2$ such that for $r\leq \tfrac{1}{10}\sqrt{\eps^*}$
\begin{align*} 
\sup_{\Omega_{\delta/10}\cup B_{\sqrt{\eps^*}/10}(x^*,y^*)} F & \leq \tilde{C}_1\inf_{\Omega_{\delta/10}\cup B_{\sqrt{\eps^*}/10}(x^*,y^*)} F,\\ 
\sup_{B_{r}(x^*,y^*)} F & \leq \tilde{C}_2\inf_{B_{r}(x^*,y^*)} F.
\end{align*}
\end{prop}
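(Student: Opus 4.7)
The plan is to obtain both inequalities as direct consequences of the classical interior Harnack inequality (Theorem 8.17 of Gilbarg--Trudinger applied to the Laplacian), the only work being to keep track of the geometry that lets us apply it uniformly.

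For the second inequality, I would argue as follows. Since $r\leq\tfrac{1}{10}\sqrt{\eps^*}$ and by Remark \ref{rem:separation} the ball $B_{\sqrt{\eps^*}/10}(x^*,y^*)$ sits inside $R^{\delta,\eps^*}$ at Hausdorff distance bounded below by an absolute multiple of $\sqrt{\eps^*}$ from $\pa R^{\delta,\eps^*}$, the concentric ball $B_{2r}(x^*,y^*)$ is contained in $R^{\delta,\eps^*}$. Apply the standard interior Harnack inequality on $B_{2r}(x^*,y^*)$ to the non-negative harmonic function $F$, rescaling to a unit ball. This yields $\sup_{B_r(x^*,y^*)}F\leq \tilde{C}_2\inf_{B_r(x^*,y^*)}F$ with $\tilde{C}_2$ depending only on dimension, hence an absolute constant.

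For the first inequality I would run a chaining argument. Set $S=\Omega_{\delta/10}\cup B_{\sqrt{\eps^*}/10}(x^*,y^*)$. By Lemma \ref{lem:level1}, $\Omega_{\delta/10}$ is convex with diameter bounded by a constant multiple of $M$, and it contains $(x^*,y^*)$, so $S$ is path-connected with diameter $\lesssim M$. By Remark \ref{rem:separation}, there exists an absolute $c_3>0$ with $\mathrm{dist}(S,\pa R^{\delta,\eps^*})\geq c_3\sqrt{\eps^*}$. Fix $\rho=c_3\sqrt{\eps^*}/4$ and cover $S$ by finitely many balls $B_\rho(p_1),\dots,B_\rho(p_N)$ with centers $p_i\in S$, with each $B_{2\rho}(p_i)\subset R^{\delta,\eps^*}$, and such that any two points of $S$ can be connected by a chain of these balls in which consecutive centers satisfy $|p_i-p_{i+1}|<\rho$. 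Since the area of $S$ is bounded by a constant depending on $M$, the number $N$ of balls needed depends only on $M$ and $\eps^*$. Applying the classical Harnack inequality on each $B_{2\rho}(p_i)$ gives $\sup_{B_\rho(p_i)}F\leq C\inf_{B_\rho(p_i)}F$ with $C$ absolute; iterating along the chain then produces $\sup_S F\leq C^N\inf_S F$, so one may take $\tilde{C}_1=C^N=\tilde{C}_1(\eps^*,M)$.

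The main technical point, though routine, is arranging the chain so that every ball in it lies inside $R^{\delta,\eps^*}$ while also covering all of $S$ with uniformly many balls; the geometric inputs ensuring this are exactly the bound on the diameter of $\Omega_{\delta/10}$ from Lemma \ref{lem:level1} and the boundary separation from Remark \ref{rem:separation}. No delicate argument beyond this covering is required because the harmonicity of $F$ (not just a subsolution bound) lets us use the two-sided Harnack inequality directly.
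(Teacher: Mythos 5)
Your proposal is correct and amounts to the standard proof of exactly the result the paper simply cites from Gilbarg--Trudinger: the paper gives no argument of its own here, and your derivation (interior Harnack on balls compactly contained in $R^{\delta,\eps^*}$, scale invariance giving the absolute constant $\tilde{C}_2$, and a Harnack chain over the connected set $\Omega_{\delta/10}\cup B_{\sqrt{\eps^*}/10}(x^*,y^*)$ with covering number controlled by $M$ and $\eps^*$ giving $\tilde{C}_1(\eps^*,M)$) is precisely the intended content. The only loose point is attributing the boundary separation of the ball $B_{\sqrt{\eps^*}/10}(x^*,y^*)$ to Remark \ref{rem:separation}, which literally concerns $\Omega_{\delta/10}$; the needed margin for the ball follows from Lemma \ref{lem:level1} and $\delta<\tfrac{1}{100}\eps^*$, a routine check the paper also leaves implicit.
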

Given a unit direction $n = (a,b)$, with $a^2+b^2=1$, define 
\begin{align*}
\beta_{n} & = \tfrac{1}{100}|a|\delta + |b|C_3\sqrt{\eps^*},
\end{align*}
with $C_3$ as in Lemma \ref{lem:x-first}. Note that by Lemma \ref{lem:x-first}, $\beta_{n}$ provides an upper bound on $\left|\pa_{n}v(x,y)\right|$ in $R^{\delta,\eps^*}$. We define the function $F_{n}(x,y)$ by
\begin{align*}
F_{n}(x,y) = -\pa_{n}^2v(x,y) + 5\beta_{n}^2,
\end{align*}
which is therefore non-negative in $R^{\delta,\eps^*}$ by \eqref{eqn:half-concave}, together with the lower bound on $v^*$ from Lemma \ref{lem:max-value}.  It is also a harmonic function and so  Proposition \ref{prop:Harnack} implies that
\begin{align} \label{eqn:F-Harnack1}
\sup_{\Omega_{\delta/10}\cup B_{\sqrt{\eps^*}/10}(x^*,y^*)} F_{n} & \leq \tilde{C}_1\inf_{\Omega_{\delta/10}\cup B_{\sqrt{\eps^*}/10}(x^*,y^*)} F_{n},\\ \label{eqn:F-Harnack2}
\sup_{B_{r}(x^*,y^*)} F_{n} & \leq \tilde{C}_2\inf_{B_{r}(x^*,y^*)} F_{n}.
\end{align}
We now establish Theorem \ref{thm:second-max} for some directions $n = (a,b)$, and in the process fix the value of $\eps^*$, and how small we require $\delta$ to be. Recall that $\alpha_{n} = \max\{|b|^2,\delta\}$.
\begin{lem} \label{lem:second-deriv1}
By fixing $\eps^*>0$ sufficiently small, depending only on $C_2$, $C_3$ and $\tilde{C}_2$, and then for all  $\delta<\tfrac{1}{100}\eps^*$  sufficiently small, depending on $C_2$, $\tilde{C}_1$ and $M$, there exist constants $C_2^*=C_2^*(M)$, $C_3^*=C_3^*(M)$ such that the following bounds hold: For directions $n = (a,b)$ with $|b| \geq C_3^*\sqrt{\delta}/\sqrt{\eps^*}$, and $(x,y)\in B_{\sqrt{\eps^*}/10}(x^*,y^*)$, we have
\begin{align*}
\frac{1}{C_2^*}|b|^2 \leq -\pa_{n}^2v(x,y) \leq C_2^*|b|^2.
\end{align*}
Moreover,  we have
\begin{align*}
\frac{1}{C_2^*} \delta\leq -\pa_{x}^2v(x,y) \leq C_2^*\delta
\end{align*}
for $(x,y) \in \Omega_{\delta/10}\cup B_{\sqrt{\eps^*}/10}(x^*,y^*)$.
\end{lem}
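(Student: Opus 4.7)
The plan is: apply the Harnack inequality of Proposition \ref{prop:Harnack} to $F_x$ to establish $-\pa_x^2 v \sim \delta$ on the larger set, then use $\Delta v = -1$ to read off $-\pa_y^2 v \sim 1$, and finally invoke the Makar-Limanov matrix inequality to control the cross derivative $\pa_x \pa_y v$ so as to handle a general direction $n = (a,b)$. The preliminary step is to check that Proposition \ref{prop:Harnack} is applicable to $F_n$ for every direction $n$: $F_n$ is harmonic because $\Delta v = -1$ is constant, and the Makar-Limanov inequality $\pa_n^2 v \leq (\pa_n v)^2/(2v)$ combined with the first-derivative bound $|\pa_n v| \leq \beta_n$ from Lemma \ref{lem:x-first} and the lower bound $v \geq \tfrac{1}{9}$ on $R^{\delta,\eps^*}$ (which follows from Theorem \ref{thm:Approx}, Property \ref{pro:max} and Lemma \ref{lem:max-value}) gives $F_n \geq 0$.

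For the bound on $-\pa_x^2 v$, along the horizontal segment $y = y^*$ I use $\pa_x v(x^*,y^*) = 0$ to integrate twice:
\begin{align*}
v^* - v(x^* + d, y^*) = \int_0^d \int_0^s \bigl(-\pa_x^2 v\bigr)(x^* + t, y^*)\,dt\,ds.
\end{align*}
Picking $d$ so that $(x^* + d, y^*)$ sits on the boundary of $\Omega_{\delta/10}$, the left-hand side equals $\delta/10$ up to negligible error, while Lemma \ref{lem:level1} forces $d$ comparable to $|x_+ - x_-|$ and hence to $M$. Sandwiching the double integral between $d^2/2$ times the sup and inf of $-\pa_x^2 v$ on the segment, I obtain points in $\Omega_{\delta/10}$ at which $-\pa_x^2 v$ is bounded both above and below by multiples of $\delta/M^2$. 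Applying Harnack to $F_x = -\pa_x^2 v + 5(\delta/100)^2$ on $\Omega_{\delta/10}\cup B_{\sqrt{\eps^*}/10}(x^*,y^*)$ then propagates these one-point estimates to the two-sided bound $c(M)\delta \leq -\pa_x^2 v \leq C(M)\delta$ throughout this set, establishing the second assertion of the lemma.

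For a general unit direction $n = (a,b)$, the equation $\Delta v = -1$ combined with the previous step gives $-\pa_y^2 v = 1 + \pa_x^2 v \in [1 - C(M)\delta,\,1]$ in $B_{\sqrt{\eps^*}/10}(x^*,y^*)$. Writing the Makar-Limanov inequality as the matrix inequality $\tfrac{1}{2v}(\nabla v)(\nabla v)^T - H_v \geq 0$ and using non-negativity of its determinant, I obtain $\gamma^2 \leq \alpha\beta$ for
\begin{align*}
\alpha = -\pa_x^2 v + \tfrac{(\pa_x v)^2}{2v},\quad \beta = -\pa_y^2 v + \tfrac{(\pa_y v)^2}{2v},\quad \gamma = -\pa_x\pa_y v + \tfrac{\pa_x v\,\pa_y v}{2v}.
\end{align*}
The bounds already obtained, together with Lemma \ref{lem:x-first}, give $\alpha = O(\delta)$ and $\beta = O(1)$, hence $|\pa_x\pa_y v| = O(\sqrt{\delta})$ (with $M$-dependent constants) in $B_{\sqrt{\eps^*}/10}(x^*,y^*)$. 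Expanding
\begin{align*}
-\pa_n^2 v = a^2(-\pa_x^2 v) + 2ab(-\pa_x\pa_y v) + b^2(-\pa_y^2 v),
\end{align*}
the term $b^2(-\pa_y^2 v) \sim b^2$ dominates the other two, whose sizes are $O(\delta)$ and $O(|b|\sqrt{\delta})$; choosing $C_3^* = C_3^*(M)$ sufficiently large ensures that for $|b| \geq C_3^*\sqrt{\delta}/\sqrt{\eps^*}$ they are absorbed into $\tfrac{1}{2}b^2$, yielding $\tfrac{1}{C_2^*}b^2 \leq -\pa_n^2 v \leq C_2^* b^2$. I expect the main obstacle to be this cross-derivative estimate: since the sign of $\pa_x\pa_y v$ is not known a priori, Harnack cannot be applied to it directly, and the Makar-Limanov matrix inequality is essential for turning the one-dimensional second-derivative bounds into the full Hessian control needed here.
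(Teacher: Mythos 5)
Your proposal takes a genuinely different route from the paper. The paper applies the Harnack inequality (\ref{eqn:F-Harnack1})--(\ref{eqn:F-Harnack2}) to the harmonic function $F_{n}$ \emph{direction by direction}: it first handles $n=(0,1)$, then $n=(1,0)$, and then for each remaining admissible direction $n$ it identifies the chord $I_n$ of $\Omega_{\delta/10}$ through $(x^*,y^*)$ and uses the geometry of Lemma~\ref{lem:level1} to produce a seed point for Harnack in that direction. You instead establish only the two ``axis'' bounds -- $-\pa_x^2 v \sim \delta$ by integrating twice along a horizontal chord and then running Harnack, and $-\pa_y^2 v \sim 1$ as a consequence of $\Delta v = -1$ -- and then control the cross-derivative $\pa_x\pa_y v$ by exploiting the full matrix form of Makar-Limanov's inequality, namely that $\tfrac{1}{2v}\nabla v\otimes\nabla v - \nabla^2 v$ is positive semidefinite and hence has nonnegative determinant. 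This yields $|\pa_x\pa_y v| = O(\sqrt{\delta})$, after which the quadratic-form expansion of $-\pa_n^2 v$ gives the result for all $|b| \geq C_3^*\sqrt{\delta}/\sqrt{\eps^*}$ in one stroke. The trade-off is instructive: the paper's per-direction Harnack avoids ever needing the cross-derivative, but has to repeat the seed-point argument; your determinant trick turns the two diagonal estimates into a full Hessian bound, which is arguably cleaner, and it makes explicit why the threshold $|b| \gtrsim \sqrt{\delta}$ is exactly where the $b^2$ term stops dominating the $O(|b|\sqrt{\delta})$ cross term.

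Two points deserve care. First, your argument establishes the $x$-direction bound \emph{before} the $y$-direction bound (the paper does the reverse), and the seed-point step relies on the horizontal chord of $\Omega_{\delta/10}$ through $(x^*,y^*)$ having length comparable to $|x_+-x_-|$; Lemma~\ref{lem:level1} as stated only bounds the \emph{projection} $L_x^{\eta}$, not the chord through the particular point $(x^*,y^*)$. (For the lower bound on $\sup(-\pa_x^2 v)$ along the chord you only need the chord to be $\lesssim M$, which is immediate; for the upper bound on $\inf(-\pa_x^2 v)$ you need the chord length bounded below, and the cleanest route is a single integration of $\pa_x^2 v$ over a segment of the fixed absolute length $C_2^{-1}$ supplied by the separation statement in Lemma~\ref{lem:level1}, using $|\pa_x v|\leq \delta/100$ from Lemma~\ref{lem:x-first} rather than the double integral.) The paper's own proof is terse at precisely this point, so this is not a defect unique to your argument, but you should spell it out. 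Second, your lower bound $v\geq\tfrac19$ on all of $R^{\delta,\eps^*}$ requires $\eps^*$ already chosen small; since you also need $\eps^*$ small so that $5\beta_n^2$ is negligible against the seed-point value of $F_n$, you should state once that $\eps^*$ is fixed small depending on the absolute constants $C_2$, $C_3$, $\tilde C_2$, exactly as in the lemma's hypotheses, before running the Harnack step. With those clarifications the proposal is correct.
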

\begin{proof}{Lemma \ref{lem:second-deriv1}}
We first establish the lemma for $n = (0,1)$, by using \eqref{eqn:F-Harnack2} with $r=\sqrt{\eps^*}/10$ and choosing $\eps^*$ sufficiently small: By Lemma \ref{lem:level1}, since $\pa_{y}v(x^*,y^*) = 0$, we must have $-\pa_{y}^2v(x^*,y) \geq C_{2}^{-1}$ for some $(x^*,y) \in B_{\sqrt{\eps^*}/10}(x^*,y^*)$. Applying \eqref{eqn:F-Harnack2} thus gives
\begin{align*}
\inf_{B_{\sqrt{\eps^*}/10}(x^*,y^*)}F_{(0,1)} \geq \tilde{C}_2^{-1}C_{2}^{-1} . 
\end{align*}
Therefore, by choosing $\eps^*$ sufficiently small depending on $C_2$, $\tilde{C}_2$ and the constant $C_3$ from Lemma \ref{lem:x-first}, we must have $-\pa_{y}^2v(x,y) \geq \tfrac{1}{2} \tilde{C}_2^{-1}C_{2}^{-1}$ in $B_{\sqrt{\eps^*}/10}(x^*,y^*)$ as desired. Lemma \ref{lem:level1} also implies that $-\pa_{y}^2v(x^*,y) \leq 2C_{2}$ for some $(x^*,y) \in B_{\sqrt{\eps^*}/10}(x^*,y^*)$, and so again applying \eqref{eqn:F-Harnack2} gives the upper bound on $-\pa_{y}^2v(x,y)$ in the ball.
\\
\\
We now use \eqref{eqn:F-Harnack1} to establish the lemma for $n = (1,0)$. By Lemma \ref{lem:level1} there exist points where $-\pa_{x}^2v(x,y^*)$ is bounded above and below by $\delta$ multiplied by constants depending only on $C_2$ and $M$. Therefore, applying \eqref{eqn:F-Harnack1} with $n = (1,0)$, and for all $\delta>0$ sufficiently small (depending only on $C_2$, $M$ and $\tilde{C}_1$), we have the desired upper and lower bounds on $-\pa_{x}^2v(x,y)$ in $\Omega_{\delta/10}\cup B_{\sqrt{\eps^*}/10}(x^*,y^*)$. 
\\
\\
For $\eps^*$ fixed and $\delta$ sufficiently small as above, let $I_{n}$ be the line segment consisting of the part of $\Omega_{\delta/10}$ passing through $(x^*,y^*)$ in the direction of $n$. Then, we can choose $C_3^* = C_3^*(M)$ so that  for directions $n = (a,b)$ with $|b| \geq C_3^*\sqrt{\delta}/\sqrt{\eps^*}$, $I_{n}$ is contained within $B_{\sqrt{\eps}/10}(x^*,y^*)$. Since by Lemma \ref{lem:level1} we have sharp upper and lower bounds on the lengths of $I_{n}$, for these directions we can therefore apply \eqref{eqn:F-Harnack2} with $r = \sqrt{\eps^*}/10$, and repeat the argument for that of $-\pa_{y}^2v(x,y)$, to get the required upper and lower bounds on $-\pa_{n}^2v(x,y)$ in this ball. 
\end{proof}
Let us now fix $\eps_0>0$, with $\eps_0<\tfrac{1}{100}\eps^*$. The value of $\eps_0$ will be given (depending only on $M$) after the following lemma:
\begin{lem} \label{lem:second-deriv2}
There exist constants $a^*_1=a_1^*(M)$ and $A^*_1 = A_1^*(M)$ $($independent of $\eps_0$$)$ such that the following holds: The superlevel set $\{(x,y)\in\Omega: v \geq v^*- a^*_1\eps_0\delta\}$ is contained in $B_{\sqrt{\eps_0}}(x^*,y^*)$. The projection of this superlevel set onto the $x$ and $y$-axes have lengths between $A^{*-1}_1\sqrt{\eps_0}$ and $2\sqrt{\eps_0}$, and, $A^{*-1}_1 \sqrt{\eps_0}\sqrt{\delta}$ and $A^{*}_1 \sqrt{\eps_0}\sqrt{\delta}$ respectively.
\end{lem}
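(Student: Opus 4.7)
The plan is to prove the lemma via a Taylor expansion of $v$ around its critical point $(x^*,y^*)$, using Hessian bounds coming from Lemma \ref{lem:second-deriv1} together with a supplementary estimate on the mixed partial $\partial_x\partial_y v$. Once the negative Hessian is shown to be, up to multiplicative constants depending on $M$, the diagonal matrix $\mathrm{diag}(\delta,1)$ on $B_{\sqrt{\eps_0}}(x^*,y^*)$, reading off the geometry of the superlevel set amounts to inspecting a quadratic form, with convexity of the superlevel sets (via Makar-Limanov) used to promote pointwise estimates into a statement about the whole set.

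The ingredient missing from Lemma \ref{lem:second-deriv1} is a bound on $\partial_x\partial_y v$, and here the key observation is that $\partial_x v$ is harmonic, since $\Delta v = -1$. Theorem \ref{thm:Approx1}, together with Property \ref{pro:max} and the fact from Lemma \ref{lem:level1} that $y^*$ lies within $O(\sqrt{\delta})$ of $\tfrac{1}{2}$, yields $|\partial_x v(x,y)| \leq \tfrac{1}{100}\delta$ on a rectangle $R$ of unit-order size containing $B_{\sqrt{\eps^*}/10}(x^*,y^*)$, provided $y$ is kept away from $f_1(x)$ and $f_2(x)$ by an absolute amount (e.g.\ $c^* = \tfrac{1}{8}$). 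Interior gradient estimates for the harmonic function $\partial_x v$ then give $|\partial_x\partial_y v(x,y)| \leq C\delta$ on $B_{\sqrt{\eps^*}/10}(x^*,y^*)$ for an absolute constant $C$.

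Combined with Lemma \ref{lem:second-deriv1}, one has in $B_{\sqrt{\eps_0}}(x^*,y^*)$ the three estimates $-\partial_x^2 v \asymp \delta$, $-\partial_y^2 v \asymp 1$ (with $M$-dependent constants), and $|\partial_x\partial_y v| \leq C\delta$ (absolute). Taylor's theorem applied at the critical point then gives
\begin{align*}
v^* - v(x,y) = \tfrac{1}{2}\bigl[(x-x^*)^2(-\partial_x^2 v) + 2(x-x^*)(y-y^*)(-\partial_x\partial_y v) + (y-y^*)^2(-\partial_y^2 v)\bigr]\Big|_{(\xi,\eta)}
\end{align*}
for some $(\xi,\eta)$ on the segment from $(x^*,y^*)$ to $(x,y)$. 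The cross term is bounded by $C\delta|x-x^*||y-y^*|$ and is absorbed into the diagonal by AM-GM when $\delta$ is sufficiently small depending on $M$, producing comparable two-sided bounds
\begin{align*}
c(M)\bigl[\delta(x-x^*)^2 + (y-y^*)^2\bigr] \leq v^* - v(x,y) \leq C(M)\bigl[\delta(x-x^*)^2 + (y-y^*)^2\bigr]
\end{align*}
throughout $B_{\sqrt{\eps_0}}(x^*,y^*)$.

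The three claims of the lemma now follow directly from this quadratic form. Choosing $a_1^* \leq c(M)/2$ ensures $v^* - v \geq 2a_1^*\eps_0\delta$ on $\partial B_{\sqrt{\eps_0}}(x^*,y^*)$; combined with convexity of the superlevel set (any exterior point would produce, via the segment from $(x^*,y^*)$, a boundary point of $B_{\sqrt{\eps_0}}(x^*,y^*)$ in the superlevel set), this gives containment of $\{v \geq v^*-a_1^*\eps_0\delta\}$ in $B_{\sqrt{\eps_0}}(x^*,y^*)$ and hence the upper bound $2\sqrt{\eps_0}$ on the $x$-projection. The matching lower bound on the $x$-projection comes from restricting to $y = y^*$, where the upper Taylor bound shows $(x,y^*)$ lies in the superlevel set whenever $|x-x^*| \leq (a_1^*/C(M))^{1/2}\sqrt{\eps_0}$. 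The $y$-projection is analogous: restriction to $x = x^*$ captures the segment $|y-y^*| \leq (a_1^*/C(M))^{1/2}\sqrt{\eps_0\delta}$ in the superlevel set, while the lower Taylor bound applied to any superlevel-set point forces $(y-y^*)^2 \leq (4a_1^*/c(M))\eps_0\delta$. The principal obstacle I expect is verifying the bound on $\partial_x\partial_y v$ via the harmonic gradient estimate, taking care that the absolute constant it produces actually dominates the cross term for $\delta$ small compared to $M$-dependent quantities; once that is in hand, the remainder is a direct computation with the quadratic form above.
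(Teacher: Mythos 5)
Your proof is correct, but it takes a genuinely different route from the paper's. The paper's argument avoids the mixed partial $\partial_x\partial_y v$ entirely: it applies the directional second-derivative estimates of Lemma \ref{lem:second-deriv1} one dimension at a time along line segments through $(x^*,y^*)$ (so the cross term never arises), reads off how far the superlevel set extends along each such segment, and then uses convexity of the superlevel set to pass from these one-dimensional slices to the two-dimensional projection bounds. You instead supply the missing Hessian entry: since $\partial_x v$ is harmonic, and Theorem \ref{thm:Approx1} together with Property \ref{pro:max} gives $|\partial_x v|\leq\tfrac{1}{100}\delta$ on a rectangle of absolute size around $(x^*,y^*)$ --- the absolute $x$-margin coming from the separation between $I_x^{\delta/10}$ and $\partial I_x^{3\delta/20}$ in Lemma \ref{lem:level1}, and the absolute $y$-margin from fixing $c^*$ in Theorem \ref{thm:Approx1} --- an interior gradient estimate yields $|\partial_x\partial_y v|\leq C\delta$ with $C$ absolute on $B_{\sqrt{\eps^*}/10}(x^*,y^*)$. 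Absorbing the cross term by Cauchy's inequality then requires $\delta$ small relative to the $M$-dependent Hessian lower bound, which is exactly the standing smallness hypothesis on $\delta$. This costs one extra step compared to the paper, but buys a cleaner output: the two-sided quadratic comparison $v^*-v\asymp\delta(x-x^*)^2+(y-y^*)^2$ near $(x^*,y^*)$, from which the containment and projection bounds are immediate and which would in fact re-derive the directional estimates of Theorem \ref{thm:second-max1} for all unit directions at once.
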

\begin{proof}{Lemma \ref{lem:second-deriv2}}
Given $a^*>0$, consider the superlevel set $\{(x,y)\in\Omega:v\geq v^*-a^*\eps_0\delta\}$. By Lemma \ref{lem:second-deriv1}, this set contains points $(x,y^*)$ for $x$ in an interval of length comparable to  $\sqrt{a^*}\sqrt{\eps_0}$. Also, for the range of directions $n=(a,b)$ in Lemma \ref{lem:second-deriv1}, it contains an interval passing through $(x^*,y^*)$ of  length comparable to  $|b|^{-1}\sqrt{a^*}\sqrt{\eps_0}\sqrt{\delta}$, (with in all cases implicit constants depending only on $C_2^*$ from Lemma \ref{lem:second-deriv1}). Since $\{(x,y)\in\Omega:v\geq v^*-a^*\eps_0\delta\}$ is convex, this is sufficient to ensure that the projection of it onto the $x$ and $y$ axes is comparable to $\sqrt{a^*}\sqrt{\eps_0}$ and $\sqrt{a^*}\sqrt{\eps_0}\sqrt{\delta}$ respectively (with implicit constants depending only on $C_2^*$ and $C_3^*$ from Lemma \ref{lem:second-deriv1}, and the now fixed $\eps^*$). Since $C_2^*$ and $C_3^*$ only depend on $M$, we can therefore choose $a^*$ sufficiently small, depending only on $M$ so that the result of the lemma holds. 
\end{proof}
We can now complete the proof of Theorem \ref{thm:second-max1} by obtaining second derivative bounds for $n = (a,b)$ with $|b|\leq C_3^*\sqrt{\delta}/\sqrt{\eps^*}$. By Lemmas \ref{lem:second-deriv1} and \ref{lem:second-deriv2}, for $n = (a,b)$ we have the first derivative bound
\begin{align*}
\left|\pa_{n}v(x,y)\right| \leq C_2^*\left(|a|\delta + |b|\sqrt{\delta}\right) A^{*}_1 \sqrt{\eps_0}
\end{align*}
in  $B_{\sqrt{\eps_0}}(x^*,y^*)$. Moreover,  for $|b|\leq C_3^*\sqrt{\delta}/\sqrt{\eps^*}$, the superlevel set $\{(x,y)\in\Omega: v \geq v^*- a^*_1\eps_0\delta\}$ consists of an interval of length comparable to $\sqrt{\eps_0}$ (with implicit constants depending on $A_1^*(M)$). In particular, for this range of $n$, given $c^*>0$ we can choose $\eps_0$ (depending only on $c^*$ and $M$) so that $|\pa_{n}v(x,y)| \leq c^*\delta$. There must be points in $\{(x,y)\in\Omega: v \geq v^*- a^*_1\eps_0\delta\}$ where $-\pa_{n}^2v(x,y)$ is  is bounded above and below by $\delta$ multiplied by constants depending only on $a_1^*(M)$ and $A_1^*(M)$. Therefore, we choose $c^*$ (and hence $\eps_0$) sufficiently small depending on these two constants, and apply the Harnack inequality from \eqref{eqn:F-Harnack2} with $r = \sqrt{\eps_0}$ to 
\begin{align*}
F_{n} = -\pa_{n}^2v(x,y) + 5c^{*2}\delta^2.
\end{align*}
This ensures that $-\pa_{n}^2v(x,y)$ is comparable to $\delta$ in $B_{\sqrt{\eps_0}}(x^*,y^*)$ with $\eps_0$ and the implicit constants depending only on $M$ as required, and this completes the proof of Theorem \ref{thm:second-max}.

\Addresses
 
\end{document}